\documentclass[reqno,a4paper]{amsart}
\usepackage[utf8]{inputenc}

\title[
On {B}ayesian data assimilation for ill-posed {PDE}s
]{
On Bayesian data assimilation for PDEs with ill-posed forward problems
}
\author{
S.~Lanthaler, 
S.~Mishra,
F.~Weber
%Anonymous
% FUNDING DECLARATION??
}
\date{\today}

% useful packages
\usepackage{amsthm,amssymb}
\usepackage{mathtools,thmtools}
\usepackage{bm,esint}
\usepackage{color}
\usepackage{float,graphicx,subcaption}
\usepackage{hyperref}
\usepackage{cleveref}
\usepackage{cancel}
\usepackage{tikz,tikz-cd}
\usepackage[shortlabels]{enumitem}
\usepackage{mathrsfs}  
\usepackage{todonotes}
\usepackage{euscript}
\usepackage{cite} % references in order; not [1,6,3,2] --> [1-3,6]

%% enumitem labeled lists
\makeatletter
\def\namedlabel#1#2{\begingroup
    #2%
    \def\@currentlabel{#2}%
    \phantomsection\label{#1}\endgroup
}
\makeatother

\renewcommand{\tilde}{\widetilde}

% kullback-leibler divergence

% prior
\newcommand{\mup}{\mu_{\mathrm{prior}}}
\newcommand{\mun}{\mu_{\mathrm{noise}}}

% useful short-hand definitions
% some commands

\DeclareMathOperator*{\esssup}{ess\,sup}
\DeclareMathOperator*{\essinf}{ess\,inf}

\renewcommand{\i}{\mathbf{i}}
\newcommand{\e}{\mathbf{e}}

\newcommand{\cl}{{\mathrm{cl}}}

\newcommand{\proj}{\mathrm{Proj}}

% explanation for equation

\newcommand{\explain}[2]{\overset{\mathclap{\underset{\downarrow}{#2}}}{#1}}

\newcommand{\loc}{{\mathrm{loc}}}

\newcommand{\slot}{{\,\cdot\,}}

\newcommand{\weaklyto}{{\rightharpoonup}}

% mathbb

\newcommand{\T}{\mathbb{T}}
\newcommand{\R}{\mathbb{R}}
\newcommand{\E}{\mathbb{E}}

\newcommand{\Prob}{\mathbb{P}}
\renewcommand{\S}{\mathscr{S}}
\renewcommand{\div}{{\mathrm{div}}}

\newcommand{\Lip}{\mathrm{Lip}}

\newcommand{\diff}{\EuScript{D}}
\newcommand{\op}{\cS^\dagger}%{\EuScript{G}}
\newcommand{\bv}{{\mathbf{v}}}

\newcommand{\bd}{\EuScript{B}}
\renewcommand{\P}{{\mathcal{P}}}
\newcommand{\cP}{{\mathcal{P}}}

\newcommand{\cF}{\mathcal{F}}
\newcommand{\G}{\mathcal{G}}
\newcommand{\cG}{\mathcal{G}}
\renewcommand{\L}{\mathcal{L}}
\newcommand{\cL}{\mathcal{L}}
\newcommand{\tL}{\tilde{\cL}}
\newcommand{\Normal}{\mathcal{N}}

\newcommand{\cZ}{\mathcal{Z}}
\newcommand{\cS}{\mathcal{S}}

\newcommand{\define}{\textbf}

\newcommand{\set}[2]{{\left\{ #1 \,\middle|\, #2 \right\}}}

% define some theorem environments
\declaretheoremstyle[
  headfont=\normalfont\bfseries\itshape,
  numbered=unless unique,
  bodyfont=\normalfont,
  spaceabove=1em plus 0.75em minus 0.25em,
  spacebelow=1em plus 0.75em minus 0.25em,
  qed={},
]{deflt}

\theoremstyle{deflt}
\newtheorem{theorem}{Theorem}[section]
\newtheorem{assumption}[theorem]{Assumption}
\newtheorem{example}[theorem]{Example}
\newtheorem{remark}[theorem]{Remark}

\newtheorem{definition}[theorem]{Definition}
\newtheorem{lemma}[theorem]{Lemma}
\newtheorem{proposition}[theorem]{Proposition}

\newtheorem{result}[theorem]{Result}

\numberwithin{equation}{section}
\numberwithin{theorem}{section}

\begin{document}

\maketitle

\begin{abstract}
We study Bayesian data assimilation (filtering) for time-evolution PDEs, for which the underlying forward problem may be very unstable or ill-posed. Such PDEs, which include the Navier-Stokes equations of fluid dynamics, are characterized by a high sensitivity of solutions to perturbations of the initial data, a lack of rigorous global well-posedness results as well as possible non-convergence of numerical approximations. Under very mild and readily verifiable general hypotheses on the forward solution operator of such PDEs, we prove that the posterior measure expressing the solution of the Bayesian filtering problem is stable with respect to perturbations of the noisy measurements, and we provide quantitative estimates on the convergence of approximate Bayesian filtering distributions computed from numerical approximations. For the Navier-Stokes equations, our results imply uniform stability of the filtering problem even at arbitrarily small viscosity, when the underlying forward problem may become ill-posed, as well as the compactness of numerical approximants in a suitable metric on time-parametrized probability measures.
\end{abstract}

\section{Introduction}

Partial differential equations (PDEs) are ubiquitous as mathematical models in the sciences and engineering. A time-dependent PDE takes the following generic form, \begin{equation}
    \label{eq:pde}
    \begin{aligned}
    \partial_t u + \diff\left(f,u,\nabla_x u,\nabla^2_x u,\cdots \right) &= 0, \quad \forall x \in D, t \in (0,T) \\
    \bd u &= \hat{u}, \quad \forall x \in \partial D, t \in (0,T), \\
    u(x,0) &= \bar{u}, \quad \forall x \in D.
    \end{aligned}
\end{equation}
Here, $\diff$ is a differential operator that depends on the solution $u$ and its spatial derivatives, as well as on a coefficient (source term) $f$. The PDE is supplemented with initial conditions and with boundary conditions, imposed through a boundary operator $\bd$. The inputs to the PDE are given by $\bv = \left[\bar{u},\hat{u},f \right]$, which constitute the initial data, boundary data and coefficients (source terms). These inputs are related to the solution $u$ of the PDE \eqref{eq:pde} through the so-called \emph{data-to-solution operator}, 
\begin{equation}
    \label{eq:solop}
    \op: X \mapsto Y, \quad \bv \mapsto \op(\bv) = u,
\end{equation}
with $u$ solving the PDE \eqref{eq:pde}. $X$ and $Y$ are suitable (subsets of) Banach spaces. 

Often, one is interested, not just in the solution field $\bv$ of \eqref{eq:pde}, but rather in finite-dimensional \emph{quantities of interest} or \emph{observables}, which are given in the generic form,
\begin{equation}
    \label{eq:obs}
\L^\dagger : \, X \to \R^d,
\qquad
\bv \mapsto \L^\dagger\left(\bv\right).
\end{equation}
The observable $\L^\dagger$ can be written as a composition $\L^\dagger(\bv) = \G^\dagger(\op(\bv))$, with $\G^\dagger: Y \to \R^d$ a functional (e.g. given by point evaluations or local averages).
Thus, the so-called \emph{forward problem} for the PDE \eqref{eq:pde}, is to evaluate the solution operator $\op$ or the observables $\L^\dagger$, given the inputs $\bv$.

However, it is not always possible to exactly \emph{know} the inputs $\bv$ (initial and boundary data, coefficients, sources etc). Rather in practice, one has to infer information about the inputs $\bv$, and consequently the solution $u$, from \emph{measurements} of the observables in \eqref{eq:obs}. Moreover in general, these measurements are \emph{noisy}. Thus one has to solve the so-called \emph{inverse problem} for a PDE, i.e., \emph{determine the input $\bv$ (and solution $u$) for the PDE \eqref{eq:pde}, given measurements of the form,}
\begin{align} \label{eq:meas}
y = \L^\dagger(\bv) + \eta,
\qquad \eta \sim \rho(y) \, dy,
\end{align}
with the noise sampled from a probability measure on $\R^d$, defined by its density $\rho$. 

It is well known that, in general, a \emph{deterministic formulation} of the afore-mentioned inverse problem can be \emph{ill-posed}. Although different regularization procedures have been developed over the last few decades to deal with this ill-posedness, it is now well-established that a statistical formulation of the inverse problem, based on a Bayesian framework, is very suitable in this context \cite{Tarantola2005,KaipioSomersalo2006,Stuart2010}.

Within a Bayesian formulation of the inverse problem, associated with the mapping \eqref{eq:obs} and measurements \eqref{eq:meas}, one encodes statistical information about the system (say inputs $\bv$ in \eqref{eq:pde}) in terms of a \emph{prior probability measure}. The additional information from the measurements \eqref{eq:meas} can be used to improve the prior by an application of the well-known Bayes' theorem \cite{Stuart2010}. This results in a so-called \emph{posterior} probability measure, on the inputs $\bv$, which represents the conditional probability of the underlying inputs, given the measurements \eqref{eq:meas}. Thus, the \emph{Bayesian Inverse Problem} (BIP) can be interpreted as a \emph{mapping from the measurements \eqref{eq:meas} to the posterior measure}. 

In contrast to the generic situation for deterministic inverse problems, it has been shown that the corresponding Bayesian inverse problem for PDEs is often \emph{well-posed}, i.e., the posterior measures exists, is unique and \emph{depends continuously} (in suitable metrics) on the measurements \eqref{eq:meas} \cite{Stuart2010,Latz2020,Sprungk2020}. Furthermore, Bayesian inverse problems can incorporate the deterministic formulation of regularized ill-posed inverse problems: As shown in \cite{Stuart2010}, the latter can often be viewed as the maximum a posteriori (MAP) estimator of a Bayesian inverse problem with a suitable choice of the underlying prior.

However, these remarkable well-posedness results for Bayesian inverse problems for PDEs rely on the well-posedness of the underlying \emph{forward problem}, often requiring that the mapping $\L^\dagger$ in \eqref{eq:obs} is \emph{Lipschitz continuous} in suitable metrics and converting this Lipschitz continuity into stability results for the posterior measure with respect to perturbations in the measurements, see \cite{Stuart2010} for a survey of these results and their applications to a variety of PDEs. More recently in \cite{Latz2020,Sprungk2020}, these Lipschitz continuity assumptions on the forward map $\L^\dagger$ in \eqref{eq:obs}, have been considerably relaxed. In particular, under suitable assumptions on the measurement noise $\eta$ in \eqref{eq:meas}, mere \emph{existence and measurability} of the forward map suffices for the well-posedness of the underlying Bayesian inverse problem \cite{Latz2020}.    

For models describing the \emph{temporal evolution} $t\mapsto u(t)$ of a system, the forward operator $\cS^\dagger$ can either be interpreted as a mapping from the given data to a space of time-dependent solutions $t \mapsto u(t)$, or equivalently, as a time-parametrized operator $\cS_t^\dagger: X \to X$, such that $t \mapsto u(t) = \cS^\dagger_t(\bar{u};\hat{u},f)$ describes the evolution of the system. For such systems, which include the PDE \eqref{eq:pde}, Bayesian inversion can be used to estimate the initial state $\bar{u}$, the boundary conditions $\hat{u}$, or the source term $f$, under very general conditions on the measurement operator $\cL^\dagger$. However, for many problems of practical importance, one is ultimately interested in an estimate of the \emph{underlying state} $u(t)$ at the present or a future time $t$. The resulting \emph{data assimilation} or \emph{filtering} problem thus seeks to blend measurement data with the underlying evolution model to make predictions about the future state. Besides its intrinsic interest, one motivation for studying a statistical viewpoint of data assimilation based on a Bayesian approach \cite{apte2008data} comes from the fact that many popular methods for data assimilation, such as the three-dimensional variational filter (3DVAR) \cite{courtier1998ecmwf}, the four-dimensional variational filter (4DVAR) \cite{rabier2000ecmwf} or the ensemble Kalman filter (e.g. \cite{evensen2009data} and references therein) can suitably be interpreted as arising from MAP estimators or Gaussian approximations of this Bayesian approach \cite{apte2008data}. In fact, the Bayesian formulation has been proposed as a ``gold-standard'' against which other methods can be evaluated \cite{EvaluatingDataAssimilationAlgorithms}. A mathematically rigorous introduction to data assimilation from this Bayesian perspective is presented in \cite{law2015data}, where attention is restricted to finite-dimensional models and Gaussian noise. To the best of the authors' knowledge, a systematic investigation of the well-posedness of Bayesian data assimilation for \emph{infinite-dimensional models} arising from PDEs, and the extension of the corresponding theory on Bayesian inverse problems of \cite{Stuart2010} to the data assimilation setting, has so far been outstanding.

Data assimilation is of particular importance in the context of fluid flows. For many fluid models, it is well-known that predictions of future states can depend very sensitively on small perturbations of the initial data (or boundary data, source terms, etc.) \cite{pope2001turbulent,frisch1995turbulence}. This sensitivity to small changes can render the forward evolution (effectively) \emph{ill-posed}. Prototypical examples for such ill-posed PDEs are provided by the fundamental equations of fluid dynamics, such as the incompressible Navier-Stokes or the compressible Euler equations. For the incompressible Navier-Stokes equations, there are currently no global well-posedness results in three space dimensions. Although admissible weak solutions exist \cite{Leray1934}, the uniqueness, stability and regularity of such solutions are outstanding open problems. Even for the two-dimensional Navier-Stokes equations, for which existence and uniqueness results have been obtained \cite{Ladyzhenskaya}, the known stability estimates for the forward problem exhibit a very unfavourable, exponential dependence $\sim \exp(t/\nu)$ on the viscosity\footnote{Physically, the non-dimensional quantity to consider is the Reynolds number $1/\mathrm{Re}\propto \nu$, obtained after suitable normalization of the equation. We will assume that the equations are suitably scaled, and will not distinguish between Reynolds number and the viscosity.} $\nu$, reflecting the high sensitivity to small perturbations of the initial data. As the viscosity $\nu \ll 1$ is often a very small number in applications, this can render even the two-dimensional Navier-Stokes equations so unstable, as to be effectively ill-posed. Similar remarks apply to the compressible Euler equations, which are canonical examples of \emph{hyperbolic systems of conservation laws} \cite{Dafermos2005}. In this case, there are no rigorous global-in-time well-posedness results in either two or three space dimensions, reflecting a lack of stability of the underlying forward solution operator $\cS^\dagger_t$, or of (numerical) approximations thereof $\cS^\Delta_t \approx \cS^\dagger_t$.
Due to the lack of stability of the forward problem for these fundamental equations of fluid dynamics, it is thus not clear to what extent the well-posedness results of \cite{Stuart2010,Latz2020,Sprungk2020} obtained for Bayesian inverse problems can be extended to this time-varying setting. Indeed, Bayesian inversion apparently only yields estimates on the \emph{initial state} in this setting, whereas data assimilation involves an additional prediction step to estimate \emph{future states}.

This lack of stability of many fluids with respect to perturbations of the initial data motivates the following question: Does Bayesian data assimilation suffer from a similar sensitivity to perturbations in the measurement data? I.e., is the well-posedness of the Bayesian data assimilation problem contingent on the well-posedness of the corresponding forward problem? The Bayesian framework has been remarkably successful in the context of weather forecasting, climate modeling and oceanography \cite{tapio}. Given that the underlying models include the incompressible Navier-Stokes and the compressible Euler equations as the core governing PDEs, how does one reconcile the empirical success of the Bayesian framework with the lack of stability of the underlying forward problem?

This dichotomy sets the stage for the current article where we investigate the well-posedness of Bayesian data assimilation for PDEs where the forward evolution operator may be ill-posed. Besides investigating the well-posedness of Bayesian data assimilation for the exact solution operator $\cS^\dagger_t$, we also consider approximations to the forward map, $\cS^{\Delta}_t \approx \cS^\dagger_t$, which may stem from numerical approximations of the underlying PDE \eqref{eq:pde}. Such approximations 
%are well-defined and 
lead to a family of \emph{approximate posteriors} for the Bayesian data assimilation problems. In this article, we will prove, under very general hypotheses, that
\begin{itemize}
    \item The Bayesian filtering problem is well-posed under mild assumptions, even if the forward problem may be \emph{ill-posed}; in particular, the mapping from measurements to posterior is uniformly Lipschitz continuous, \emph{independently} of the stability of the forward problem. 
    \item Under mild conditions on the convergence of approximate solution operators $\cS^\Delta_t \to \cS^\dagger_t$, the corresponding approximate Bayesian posteriors are consistent, in the sense that they converge in a suitable metric to the exact posterior as $\Delta \to 0$, and with the same convergence rate.
    \item Suitable families of approximate posteriors for the Navier-Stokes equations (and related equations) are compact in an appropriate metric, as $\Delta \rightarrow 0$, with uniformly continuous dependence on the measurements $\bm{y}$. This allows us to define a non-empty set of candidate solutions for the limiting Bayesian data assimilation problem, as $\Delta \to 0$, even for models for which there are no known convergence guarantees, $\cS^\Delta_t \overset{??}{\to} \cS^\dagger_t$, for the forward problem, such as the \emph{three-dimensional} Navier-Stokes equations.
    \end{itemize}
Although uniqueness of the posterior is not necessarily guaranteed with these compactness arguments, our construction could pave the way for proposing additional selection criteria on the set of approximate posteriors to recover uniqueness. 

\subsection{Organization} This work is organized as follows: In section \ref{sec:notation}, we introduce the precise mathematical setting. To this end, we first formalize Bayesian data assimilation in the infinite-dimensional setting considered in the present work, and provide a formal definition of well-posedness and consistency, following similar considerations as for the Bayesian inverse problem (BIP) in \cite{Stuart2010,Latz2020,Sprungk2020}. We also briefly review key elements of the well-posedness theory for the Navier-Stokes equations in section \ref{sec:NS}, which serves as our main prototypical model, motivating the present work. In section \ref{sec:main}, we point out the precise connection between Bayesian inversion and Bayesian data assimilation (filtering), before stating our main results regarding the well-posedness, consistency and uniform stability of Bayesian filtering (cp. section \ref{sec:wpBDA}). The technical details of the mathematical derivation of these main results are collected in section \ref{sec:derivation}, where we also comment on related results for hyperbolic conservation laws. Conclusions are provided in section \ref{sec:discussion}. Some mathematical background is summarized in the appendix.

\section{Mathematical setting and notation}
\label{sec:notation}

In the present section, we introduce notation that is employed throughout this work, and provide definitions for the Bayesian data assimilation problems of interest. Besides setting the background for our main results, summarized in the subsequent section \ref{sec:main}, we will also review some key results on the well-posedness and numerical approximation of the Navier-Stokes equations which have largely motivated the present work on the well-posedness of the corresponding Bayesian data assimilation problem, and the convergence of approximate posteriors obtained by discretization. 

Throughout this work, we follow the convention that constants $C$ appearing in estimates may change their value from line to line. The dependency of the constant $C$ on the given data (e.g. parameters $\alpha,\beta,\gamma$) should usually be clear from the context and will be indicated by writing $C = C(\alpha,\beta,\gamma)$.

\subsection{Bayesian data assimilation}
\label{sec:BDA}

Data assimilation (DA) seeks to provide an estimate for the underlying state of a system, by combining available measurements with a model of the system. The temporal evolution of the system's state can often be described by a forward solution operator $\cS^\dagger_t: X \to X$ depending on time $t\in [0,T]$ and mapping the initial data $\bar{u} \in X$ to the solution at time $t$. Here, we assume $X$ to be a Banach space, equipped with a norm $\Vert \slot \Vert_X$. The evolution of the system starting from initial state $\bar{u}$ is thus given by $t \mapsto \cS^\dagger_t(\bar{u})$. In view of the application to ill-posed problems, we will make essentially no assumptions on the regularity of $\cS^\dagger_t$; in fact, unless otherwise stated, we will merely assume that:
\begin{itemize}
\item[\namedlabel{ass:sol1}{($\cS$.1)}] 
The solution operator defines a \emph{Borel measurable} mapping 
\begin{align} \label{eq:sol1}
\cS^\dagger: [0,T]\times X \to X, \quad (t,\bar{u}) \mapsto \cS^\dagger_t(\bar{u}),
\tag{$\cS$.1}
\end{align}
with $t\mapsto \cS_t(\bar{u})$ and $\bar{u}\mapsto \cS_t(\bar{u})$ measurable for all $t\in [0,T]$, $\bar{u}\in X$.
\item[\namedlabel{ass:sol2}{($\cS$.2)}] 
There exists a constant $B_\cS>0$, such that 
\begin{align} \label{eq:sol2}
\Vert \cS^\dagger_t(\bar{u}) \Vert_{X} \le B_{\cS} \Vert \bar{u} \Vert_X,
\tag{$\cS$.2}
\end{align}
for all $\bar{u},\bar{u}'\in X$, and $t \in [0,T]$.
\end{itemize}

The Bayesian data assimilation problem can then be stated as follows: Given a prior probability measure $\mup \in \cP(X)$, we consider the initial state as a random variable $\bar{u} \sim \mup$. Given a time interval $[0,T]$ and a sequence $0=t_0 < t_1 < \dots < t_N$, we assume that noisy measurements $y_1,\dots, y_N$ are made, where $y_j$ depends only on the underlying state during the time interval $[t_{j-1},t_j]$ and is of the form:
\begin{align}
\label{eq:meas1}
y_j = \cG_j(\cS^\dagger(\bar{u})) + \eta_j.
\end{align}
These measurements \eqref{eq:meas1} are defined in terms of certain measurement functionals $\cG_j$ and random variables $\eta_j \sim \mun$ modeling (additive) measurement noise, both of which are further specified next.

\begin{figure}[H]
\centering
\def\svgwidth{.5\columnwidth}
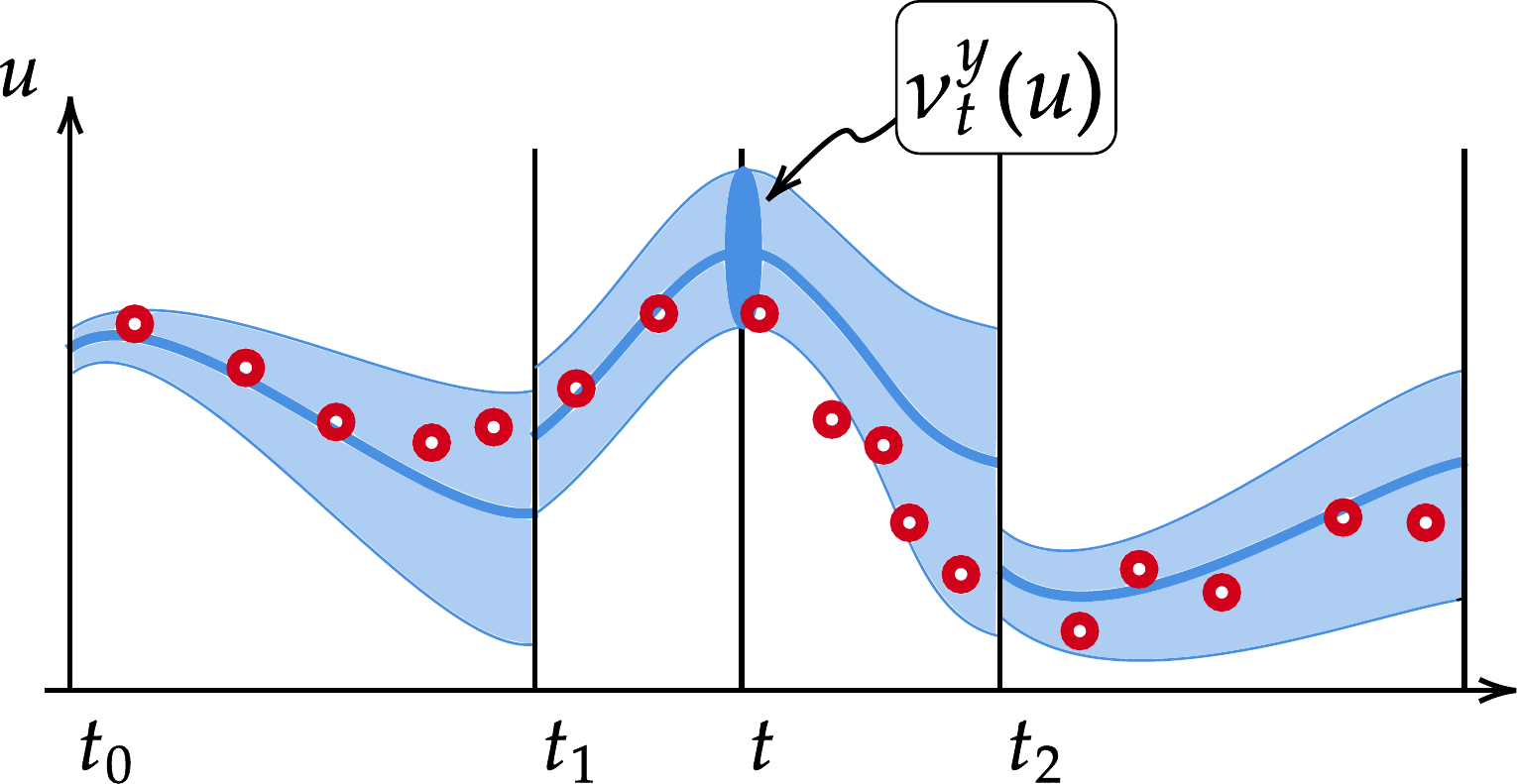

\caption{Schematic illustration of the data assimilation problem: Measurements (red circles) are used at times $t = t_0$, $t_1$, ..., to periodically update the posterior measure $\nu^{\bm{y}}_t$ (indicated by its confidence interval in blue), combining all available information from the deterministic evolution and noisy measurements.}

\label{fig:filtering}
\end{figure}

\subsubsection{Assumptions on observables}
The (potentially non-linear) functionals $\cG_j: L^1([0,T];X) \to \R^d$, $j=1,\dots, N$, will be referred to as \emph{observables}, and are assumed to depend only on the values $\cS_t^\dagger(\bar{u})$ for $t\in [t_{j-1},t_j]$. We will say that the observables $\cG_j$ are Lipschitz continuous, if there exists $L_\cG >0$, such that
\begin{align}
\label{eq:Lj}
| \cG_j(u) - \cG_j(u') | &\le L_\cG \int_{t_{j-1}}^{t_j} \Vert u - u' \Vert_{X} \, dt,
\quad
\forall u,u' \in L^1([0,T];X),
\end{align}
for all $j=1,\dots, N$. Here, we recall that the space $L^1([0,T];X)$ consists of all measurable mappings $u: [0,T] \to X$, such that $\int_0^T \Vert u(t) \Vert_X \, dt < \infty$.
We note that by \ref{ass:sol1}--\ref{ass:sol2}, we have $t\mapsto \cS^\dagger_t(\bar{u}) \in L^1([0,T];X)$ for any $\bar{u}\in X$, and hence, we have a well-defined mapping $\cS^\dagger: X \to L^1([0,T];X)$, $\bar{u} \mapsto \cS^\dagger(\bar{u})$. In particular, the composition $\cG_j(\cS^\dagger(\bar{u}))$ in \eqref{eq:meas1} is well-defined.

\begin{example}[Eulerian observables]
If $X = L^2(D;\R^n)$ consists of square-integrable functions on a bounded domain $D \subset \R^m$, then a specific class of (Eulerian) observables are functionals $\cG_j: L^1([0,T];X) \to \R^d$ of the following form:
\[
\cG_j(u) = \int_{t_{j-1}}^{t_j} \int_{D} \phi(x,t) g(u(x,t)) \, dx \, dt,
\]
where $\phi \in L^\infty(D\times [t_{j-1},t_j])$, and $g: \R^n \to \R^d$ is a Lipschitz continuous function.
\end{example}

\subsubsection{Assumptions on measurement noise}

The measurement noise is modeled by random variables $\eta_1,\dots, \eta_N \sim \mun$ which, for simplicity, we assume to be iid and independent of $\bar{u}\sim \mup$. Fix a symmetric, positive definite matrix $\Gamma \in \R^{d\times d}$, and denote by $|\slot |_\Gamma$ the corresponding norm on $\R^d$ given by 
\begin{align} \label{eq:Gnorm}
\left|
y
\right|_{\Gamma}
=
\sqrt{\langle y,y\rangle_\Gamma},
\quad
\langle y, y' \rangle_\Gamma
=
\langle y, \Gamma^{-1} y'\rangle,
\end{align}
with $\langle \slot, \slot \rangle$ the standard Euclidean inner product on $\R^d$.
We assume that the noise $\eta_j \sim \mun = \rho(y) \, dy$ in \eqref{eq:meas1} possesses a distribution that is absolutely continuous with respect to Lebesgue measure $dy$ on $\R^d$ with probability density $\rho(y) > 0$, satisfying the following assumptions:
\begin{itemize}
\item[\namedlabel{ass:noise1}{(N.1)}] \emph{Regularity:} $y\mapsto \rho(y)$ is Lipschitz continuous with respect to $|\slot|_\Gamma$,\footnote{Although all norms on the finite-dimensional space $\R^d$ are equivalent, measurement noise such as Gaussian noise is naturally associated with the norm $|\slot|_\Gamma$ induced by the covariance matrix $\Gamma$.}, i.e. there exists $L_\rho > 0$, such that 
\begin{align} \label{eq:noise1}
|\rho(y) - \rho(y')| \le L_\rho |y-y'|_\Gamma, \quad \forall \, y,y'\in \R^d.
\tag{N.1}
\end{align}
\item[\namedlabel{ass:noise2}{(N.2)}] \emph{Boundedness:} $y \mapsto \rho(y)$ is bounded from above, i.e. there exists $C > 0$, such that 
\begin{align}
\sup_{y\in \R^d} \rho(y) \le C.
\tag{N.2}
\end{align}
\item[\namedlabel{ass:noise3}{(N.3)}] \emph{Tail-condition:} there exists a constant $C>0$, such that
\begin{align} \label{eq:tail}
\rho(y) \ge \frac{\exp\left({-\frac12|y|_\Gamma^2}\right)}{C}, \quad \forall \, y\in \R^d.
\tag{N.3}
\end{align}
\end{itemize}

\begin{remark}
Note that if, instead of \eqref{eq:tail}, $\rho(y)$ satisfies a tail-condition of the form $\rho(y) \ge \exp(-C|y|_\Gamma^2)/C$, then upon simply rescaling $\tilde{\Gamma} := \sqrt{2/C}\, \Gamma$, we have $\rho(y) \ge \exp(-\frac12 |y|_{\tilde{\Gamma}}^2)/C$. Hence $\rho(y)$ satisfies assumptions \ref{ass:noise1}--\ref{ass:noise3} with a rescaled matrix $\Gamma \to \tilde{\Gamma}$ in this case. Therefore, the precise constant $\frac12$ in the tail-condition \eqref{eq:tail} can be assumed without loss of generality. The factor of $1/2$ turns out to be particularly convenient.
\end{remark}

Assumptions \ref{ass:noise1}--\ref{ass:noise3} are clearly fulfilled for normally distributed measurement noise $\eta_j \sim \mathcal{N}(0,\Gamma)$. This is the main application we have in mind. However, it is worth pointing out that the assumption is satisfied for a much wider class of measurement noise: In particular, since the tail-condition requires only a lower bound, our results apply to situations in which one encounters noise \emph{with a heavy tail}.

\subsubsection{Posterior/conditional probability}
Given a time $t \ge 0$ and a given subset of measurements $y_{1:k} = (y_1,\dots, y_k)$, we are interested in the conditional probability
\begin{align} 
\label{eq:DAsol}
\nu^{y_{1:k}}_{t}(\slot)
:= 
\Prob\left[
\cS_{t}^\dagger(\bar{u}) \in \slot 
\,\Big|\,
\cG_j(\cS^\dagger(\bar{u})) + \eta_j = y_j, \, \forall j=1,\dots, k
\right],
\end{align}
providing a Bayesian estimate of the underlying state $u(t) = \cS^\dagger_{t}(\bar{u})$ at time $t\ge 0$ given the prior distribution $\mup(d\bar{u})$ at $t=0$ and the measurements $y_1,\dots, y_k$. If all available measurements at past times $t_j \le t$ are taken into account, this estimate is referred to as the \emph{filtering distribution}; if the estimate also takes into account measurements obtained at times $t_j \ge t$, i.e. the state is estimated in hindsight, the posterior is referred to as the \emph{smoothing distribution}. In the case of filtering, the set of available measurements $y_{1:k}$ will itself vary with time $t$, i.e. $k=k(t)$. For concreteness, we will mostly focus on the filtering problem in the following; given all measurements $\bm{y} = (y_1,\dots, y_N)$ over a time interval $[0,T]$, the filtering distribution 
\begin{align}
\label{eq:filtsol}
\nu^{\bm{y}}_t(\slot) = \Prob[\cS^\dagger_{t}(\bar{u})\in \slot \,|\, y_{j} \,\text{with } t_j \le t],
\end{align}
 provides the best-estimate at time $t$ given \emph{only the past measurements}; the filtering distribution can be written in terms of the conditional probabilities \eqref{eq:DAsol}:
\begin{align} \label{eq:filt}
\nu^{\bm{y}}_t
:=
\begin{cases}
\nu^{\emptyset}_t, & t\in [0,t_1), \\
\nu^{y_1}_t, & t\in [t_1,t_2), \\
\vdots & \\
\nu^{y_{1:(N-1)}}, & t\in [t_{N-1},t_N), \\
\nu^{y_{1:N}}, & t\ge t_N.
\end{cases}
\end{align}
Here we have formally defined $\nu^{\emptyset}_{t} := \cS^\dagger_{t,\#} \mup$, corresponding to the best prediction in the absence of any measurements. The filtering distribution \eqref{eq:filt} thus defines a mapping from measurements $\bm{y} = (y_1,\dots, y_N) \in \R^{d\times N}$ to time-parametrized probability measures $\nu^{\bm{y}}_t$.

\subsubsection{Definition of well-posedness}
We are interested in the well-posedness of the filtering problem, as defined next: 

\begin{definition}[Well-posedness of Bayesian-DA]
\label{def:wellposed}
Given a forward operator $\cS^\dagger_t$, a prior $\mup$, a noise distribution $\mun$, and a space $(\bm{\Sigma},d_T)$ of time-parametrized probability measures $t\mapsto \nu_t$, we say that the Bayesian-DA problem is \define{well-posed}, provided that the following properties are satisfied:
\begin{enumerate}
\item \emph{Existence:} For any $\bm{y}\in \R^{d\times N}$, the posterior filtering distribution \eqref{eq:DAsol} exists in $\bm{\Sigma}$,
\item \emph{Uniqueness:} the filtering distribution is unique, 
\item \emph{Stability:} The measurement-to-posterior mapping 
\[
\R^{d\times N} \to \bm{\Sigma}, 
\quad
\bm{y} \mapsto \nu^{\bm{y}}_t,
\]
is locally Lipschitz continuous wrt. $d_T$, i.e. for any $R>0$, there exists $C(R)>0$, such that 
\[
d_T(\nu^{\bm{y}}_t,\nu^{\bm{y}'}_t) \le C |\bm{y}-\bm{y}'|_{\Gamma},
\]
for all $\bm{y},\bm{y}'\in \R^{d\times N}$, such that $|\bm{y}|_{\Gamma},|\bm{y}'|_{\Gamma} \le R$, and where we define $|\bm{y}|_{\Gamma} := \sqrt{\sum_{j=1}^N |y_j|_{\Gamma}^2}$.
\end{enumerate}
\end{definition}

One possible choice for the metric space $(\bm{\Sigma},d_T)$ will be discussed below (cp. Section \ref{sec:timep}). At this point, we would like to point out that the above definition is a direct analogue of the corresponding definition of well-posedness for the Bayesian inverse problem \cite[Def. 2.7, Def. 3.4]{Latz2020}, as well as the notion of well-posedness for the forward problem:

\begin{remark}[Well-posedness of the forward problem]
If $\cS^\dagger_t: X \to X$ is the solution operator associated with a time-evolution PDE on a time interval $[0,T]$, then the well-posedness of the forward problem is usually defined as the existence, uniqueness and stability of $\cS^\dagger_t$ on $X$, where the stability requires $\cS^\dagger_t$ to be \emph{continuous} as a mapping $\bar{u} \to \cS^\dagger_t(\bar{u})$ for any $t\in [0,T]$. In fact, $\cS^\dagger_t$ is often required to be \emph{Lipschitz continuous},  i.e. there exists a constant $L_\cS\ge 0$, such that 
\[
\Vert \cS^\dagger_t(\bar{u}) - \cS^\dagger_t(\bar{u}') \Vert_X
\le 
L_\cS \Vert \bar{u} - \bar{u}' \Vert_X,
\]
for any $\bar{u},\bar{u}' \in X$. Thus, the well-posedness of the forward problem is reflected in the regularity of $\cS^\dagger_t$. In the present work, we will study the well-posedness of the associated Bayesian filtering problem \eqref{eq:filtsol} \emph{in the absence of such regularity}, thus formally allowing for $L_\cS = \infty$.
\end{remark}

\subsubsection{Numerical discretization and consistency}
As the true forward operator $\cS^\dagger_t$ is usually not computable in practice, one often needs to replace $\cS^\dagger_t$ by a numerical approximation $\cS^\Delta_t \approx \cS^\dagger_t$, depending on a parameter $\Delta > 0$. The parameter $\Delta > 0$ may reflect the grid size in a numerical discretization, or may represent more general modeling errors; in the following, we will usually refer to $\Delta$ as the ``grid size'', and will focus on errors due to discretization of a given PDE.
%, thus assuming the PDE to describe the relevant physical processes of the underlying system. 
Upon discretization, the exact posterior \eqref{eq:DAsol} is replaced by the following conditional probability:
\begin{align}
\label{eq:DAsol1}
\nu^{\Delta,y_{1:k}}_{t}(\slot)
:= 
\Prob\left[
\cS_{t}^\Delta(\bar{u}) \in \slot 
\,\Big|\,
\cG_j(\cS^\Delta(\bar{u})) + \eta_j = y_j, \, \forall j=1,\dots, k
\right].
\end{align}
The corresponding filtering distribution $\nu^{\Delta,\bm{y}}_t$ is defined as in \eqref{eq:filt}, but with $\nu^{\Delta,y_{1:k}}_t$ replacing $\nu^{y_{1:k}}_t$. Given such a discretization, a fundamental question concerns the \emph{consistency} of the approximate posteriors $\nu^{\Delta,\bm{y}}_t$ with the limiting posterior $\nu^{\bm{y}}_t$:

\begin{definition}[Consistency]
\label{def:consistency}
The sequence of approximate posteriors $\nu^{\Delta,\bm{y}}_t$ is \emph{consistent} with the limiting posterior $\nu^{\bm{y}}_t$, with respect to a space of time-parametrized probability measures $(\bm{\Sigma},d_T)$, if $\nu^{\Delta,\bm{y}}_t\to \nu^{\bm{y}}_t$ converges locally uniformly in $\bm{y} = (y_1,\dots, y_N)\in \R^{d \times N}$; i.e., if for any $R>0$, we have 
\[
\lim_{\Delta\to 0} \sup_{|\bm{y}|_\Gamma \le R} d_T(\nu^{\Delta,\bm{y}}_t, \nu^{\bm{y}}_t) = 0.
\]
\end{definition}

\subsection{Time-parametrized probability measures}
\label{sec:timep}

Given our definition of well-posedness and consistency of the Bayesian data assimilation problem, and the solution \eqref{eq:filt} of the filtering problem, we need to define a suitable space $(\bm{\Sigma},d_T)$ of time-parametrized probability measures, $t\mapsto \nu_t$. To this end, we follow \cite{LMP1}, and introduce the following space $L^1_t(\cP) = L^1_t([0,T];\cP_1(X))$:
\begin{definition}
Let $X$ be a separable Banach space with norm $\Vert \slot \Vert_X$, and let $\cP_1(X)$ denote the set of Borel probability measures $\mu$ on $X$, with finite first-moment $\int_X \Vert u \Vert_X \, \mu(du)< \infty$. We recall that $\cP_1(X)$ is metrized by the $1$-Wasserstein metric $W_1(\slot,\slot)$ (cp. Section \ref{sec:Wasserstein} for definitions). Given a time $T>0$, we define $L^1_t([0,T];\cP_1(X))$ to be the set of weak-$\ast$ measurable\footnote{For any $\Phi\in C_b(X)$, the mapping $t\mapsto \int_{X} \Phi(u) \, \nu_t(du)$ is measurable.} mappings $[0,T] \to \cP_1(X)$, $t\mapsto \nu_t$, such that
\[
\int_0^T \int_X \Vert u \Vert_{X} \, \nu_t(du) \, dt < \infty, 
\]
and we introduce the following metric on $L^1_t([0;T];\cP_1(X))$:
\[
d_T(\nu_t, \nu'_t) 
:=
\int_0^T W_1(\nu_t,\nu'_t) \, dt, 
\quad 
\forall \, \nu_t,\nu_t' \in L^1_t([0;T];\cP_1(X)).
\]
\end{definition}
We shall usually employ the simpler notation $L^1_t(\cP) = L^1_t([0,T];\cP_1(X))$, when the temporal domain $[0,T]$ and the underlying Banach space $X$ are clear from the context. Following \cite[Proposition 2.1]{LMP1}, we also recall
\begin{proposition}
Let $X$ be a separable Banach space. Then $L^1_t(\cP) = L^1_t([0,T];X)$ is a complete metric space under the norm $d_T(\slot,\slot) = \int_0^T W_1(\slot,\slot) \, dt$.
\end{proposition}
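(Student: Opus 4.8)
\emph{Proof proposal.} The plan is to first verify that $d_T$ is a genuine metric with finite values, and then to establish completeness by the standard ``summable subsequence'' scheme, transferring the pointwise-in-$t$ completeness of $(\cP_1(X),W_1)$ to the Bochner-type space $L^1_t(\cP)$. Symmetry and the triangle inequality for $d_T$ are inherited directly from the corresponding properties of $W_1$ after integrating in $t$, and $d_T(\nu_t,\nu'_t)=0$ forces $W_1(\nu_t,\nu'_t)=0$ for a.e.\ $t$, hence $\nu_t=\nu'_t$ as elements of $L^1_t(\cP)$ (i.e.\ a.e.), since $W_1$ is a metric on $\cP_1(X)$. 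Finiteness follows from the elementary bound $W_1(\mu,\mu')\le W_1(\mu,\delta_0)+W_1(\delta_0,\mu')=\int_X\Vert u\Vert_X\,\mu(du)+\int_X\Vert u\Vert_X\,\mu'(du)$, where I use that the first moment of a measure equals its $W_1$-distance to the Dirac mass $\delta_0$; integrating in $t$ and invoking the defining integrability of $L^1_t(\cP)$ yields $d_T(\nu,\nu')<\infty$.

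For completeness, given a $d_T$-Cauchy sequence $(\nu^n)_n$ I would pass to a subsequence (relabelled $\nu^n$) with $d_T(\nu^n,\nu^{n+1})\le 2^{-n}$. By Tonelli's theorem, $\int_0^T g(t)\,dt<\infty$ for $g(t):=\sum_n W_1(\nu^n_t,\nu^{n+1}_t)$, so $g(t)<\infty$ for a.e.\ $t$; for each such $t$ the sequence $(\nu^n_t)_n$ is $W_1$-Cauchy in $\cP_1(X)$. At this point I would invoke the completeness of $(\cP_1(X),W_1)$ for a separable Banach space $X$ (a standard fact for the Wasserstein space over a complete separable metric space) to obtain a limit $\nu_t\in\cP_1(X)$ with $W_1(\nu^n_t,\nu_t)\to 0$ for a.e.\ $t$, setting $\nu_t:=\delta_0$ on the exceptional null set. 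Convergence $d_T(\nu^n,\nu)\to 0$ then follows from dominated convergence, using the pointwise bound $W_1(\nu^N_t,\nu_t)\le\sum_{m\ge N}W_1(\nu^m_t,\nu^{m+1}_t)\le g(t)\in L^1([0,T])$ together with pointwise decay of the tail; finally, a Cauchy sequence possessing a convergent subsequence converges to the same limit, so the original sequence converges.

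It remains to confirm that the limit genuinely lies in $L^1_t(\cP)$, which is the step requiring the most care. For weak-$\ast$ measurability I would observe that $W_1$-convergence implies narrow convergence, so for each fixed $\Phi\in C_b(X)$ the map $t\mapsto\int_X\Phi\,d\nu_t$ is an a.e.\ pointwise limit of the measurable maps $t\mapsto\int_X\Phi\,d\nu^n_t$, and hence measurable (after adjustment on the null set). For the first-moment integrability I would again use $\int_X\Vert u\Vert_X\,\nu_t(du)=W_1(\nu_t,\delta_0)\le W_1(\nu_t,\nu^1_t)+W_1(\nu^1_t,\delta_0)\le g(t)+\int_X\Vert u\Vert_X\,\nu^1_t(du)$, whose right-hand side is in $L^1([0,T])$ by construction. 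The main obstacle is precisely this measurability and integrability bookkeeping, combined with the clean appeal to completeness of $(\cP_1(X),W_1)$; once these are in place, the argument is the classical Riesz--Fischer completeness scheme adapted to a Wasserstein-valued target.
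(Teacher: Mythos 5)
Your proof is correct. Note that the paper itself contains no proof of this proposition: it is simply recalled from \cite[Proposition 2.1]{LMP1}, so there is no internal argument to compare against; your Riesz--Fischer scheme (pass to a subsequence with summable increments, use completeness of $(\cP_1(X),W_1)$ pointwise in $t$ over the Polish space $X$, then conclude by dominated convergence and the Cauchy property) is the standard route, and your verification that the limit is weak-$\ast$ measurable with integrable first moment is exactly the bookkeeping that has to be done. The one point you gloss over is the measurability in $t$ of $t\mapsto W_1(\nu_t,\nu'_t)$, which is needed already for $d_T$ to be well defined and again when you invoke Tonelli and dominated convergence; this follows from the Kantorovich duality \eqref{eq:kantorovich}, since by separability of $X$ the supremum there may be restricted to a countable family of $1$-Lipschitz functions (truncated to be bounded, so that weak-$\ast$ measurability applies), exhibiting $W_1(\nu_t,\nu'_t)$ as a countable supremum of measurable functions of $t$.
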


The motivation for considering this particular metric on time-parametrized probability measures is two-fold: Firstly, this metric and closely related quantities have been shown to be relevant empirically as well as analytically for the convergence of numerical approximations to so-called ``statistical solutions'', for several fundamental equations of fluid dynamics including the incompressible Navier-Stokes \cite{bansal2021numerical}, incompressible Euler \cite{LMP1,LMP2} as well as the compressible Euler equations \cite{FLMW1}. Secondly, metrics other than the Wasserstein $W_1$-metric, such as the Hellinger and total variation distances or the Kullback-Leibler divergence, which have been considered in the context of Bayesian inverse problems \cite{Latz2020,Sprungk2020}, may be less suitable in the context of Bayesian data assimilation, since these latter distances require absolute continuity of the involved measures. While this requirement of absolute continuity is often not an issue for Bayesian inverse problems \cite{Stuart2010,Latz2020,Sprungk2020}, the filtering distributions $\nu^{\bm{y}}_t$, $\nu^{\Delta,\bm{y}}_t$ considered in the present work are generally singular with respect to each other (due to the additional prediction step). Hence, we focus on distances which allow for \emph{disjoint supports} of the underlying measures, such as the Wasserstein distance.

\subsection{Navier-Stokes equations}
\label{sec:NS}

To illustrate ill-posed forward problems arising in fluid mechanics, we next review some elements of the stability theory for the incompressible Navier-Stokes equations. The Navier-Stokes equations are here viewed as a prototypical model of fluid flows, given by the following system of PDEs:
\begin{gather} \label{eq:NS}
\left\{
\begin{aligned}
\partial_t u + \div\left(u\otimes u\right) + \nabla p &= \nu \Delta u, \\
\div(u) = 0, \quad 
u(\slot ,0) &= \overline{u}.
\end{aligned}
\right.
\end{gather}
These equations describe the evolution of the flow vector field $u: D\times [0,T] \to \R^n$, $u = (u_1,\dots, u_n)$, of a fluid in $n$ dimensions. The parameter $\nu> 0$ denotes the viscosity of the fluid. The divergence term $\div(u\otimes u)$ has components $[\div(u\times u)]_i = \sum_{j=1}^n \partial_j (u_ju_i)$ ($i=1,\dots, n$), $\nabla p = (\partial_1 p, \dots, \partial_n p)$ is the gradient of the pressure $p$, and $\Delta u = \sum_{j=1}^n \partial_j^2 u$ on the right-hand side denotes the Laplacian applied to $u$. For simplicity we shall focus on the case of periodic boundary conditions. 

\begin{remark}[Setting for Navier-Stokes equations]
For $n\in \{2,3\}$, we consider initial data $\bar{u} \in L^2_x := L^2(\T^n;\R^n)$, consisting of $2\pi$-periodic $L^2$-integrable vector fields defined on the periodic torus $\T^n \simeq [0,2\pi]^n$ in $n$ dimensions. In addition, any initial data is required to be divergence-free, $\div(\bar{u})=0$. For such initial data, we seek weak solutions $u\in L^\infty([0,T];L^2_x)$ of \eqref{eq:NS}. We note that physically, the quantity $\frac12 \Vert u(t) \Vert_{L^2_x}^2$ corresponds to the kinetic energy of the underlying fluid, and hence the requirement that $\esssup_{t\in [0,T]} \Vert u(t) \Vert_{L^2_x} < \infty$ is natural.
\end{remark}

\subsubsection{Theoretical results}
\label{sec:NStheory}

 It is well-known that for very small values of the viscosity $\nu \ll 1$, solutions of the Navier-Stokes equations can exhibit turbulent behaviour, characterized by a high sensitivity to perturbations to the initial data and involving dynamics across a wide range of spatial and temporal scales \cite{pope2001turbulent,frisch1995turbulence}. This empirically observed turbulent behaviour at small viscosity is mathematically reflected by a strong $\nu$-dependence in the available a priori stability results for the solution operator $\cS^\dagger_t$ of the system \eqref{eq:NS}. This is summarized in the following well-known theorem for the two-dimensional case:
\begin{theorem}[{Stability of N-S in 2d, see \cite[p.~170, Chap.~6, Thm.~11]{Ladyzhenskaya}}]
\label{thm:NS2d}
 Let $\bar{u}\in L^2_x$ be initial data for the incompressible Navier-Stokes equations \eqref{eq:NS} for $n=2$. There exists a unique solution $u(t) = \cS^\dagger_t(\bar{u})$ of the Navier-Stokes equations with initial data $\bar{u}$. Furthermore, for any initial data $\bar{u},\bar{u}'\in L^2_x$, we have
 \[
 	\Vert \cS^\dagger_t(\bar{u}) - \cS^\dagger_t(\bar{u}') \Vert_{L^2_x}
 	\le 
 	\Vert \bar{u} - \bar{u}' \Vert_{L^2_x} \exp\left(
 	\frac{2}{\nu} \int_0^t \Vert \nabla u(\tau) \Vert_{L^2}^2 \, d\tau
 	\right).
 \]
\end{theorem}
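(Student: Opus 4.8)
The plan is to derive the claim from a single energy estimate for the difference of two solutions, obtaining uniqueness as a by-product. Existence of a weak solution $u=\cS^\dagger_t(\bar u)$ with $u\in L^\infty([0,T];L^2_x)$ and $\int_0^T \Vert \nabla u(\tau)\Vert_{L^2}^2\,d\tau<\infty$ is classical in two space dimensions (Galerkin truncation together with the a priori energy bound $\nu\int_0^t\Vert\nabla u(\tau)\Vert_{L^2}^2\,d\tau\le \tfrac12\Vert\bar u\Vert_{L^2_x}^2$ and a compactness argument), so I would invoke the standard theory for this part and focus on the quantitative stability bound; \emph{uniqueness} then follows at once by taking $\bar u=\bar u'$ in the stability estimate, which forces the two solutions to coincide.

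Fix two solutions $u,u'$ associated with data $\bar u,\bar u'$, and set $w:=u-u'$ and $q:=p-p'$. Subtracting the two copies of \eqref{eq:NS}, the difference solves $\partial_t w+\div(u\otimes u)-\div(u'\otimes u')+\nabla q=\nu\Delta w$, together with $\div w=0$ and $w(0)=\bar u-\bar u'$. I would test this equation against $w$ and integrate over $\T^2$. The time-derivative term produces $\tfrac12\tfrac{d}{dt}\Vert w\Vert_{L^2_x}^2$; the pressure term drops because $\int_{\T^2}\nabla q\cdot w\,dx=-\int_{\T^2}q\,\div w\,dx=0$; and the viscous term yields $-\nu\Vert\nabla w\Vert_{L^2}^2$. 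For the nonlinearity I would use the algebraic identity $u\otimes u-u'\otimes u'=w\otimes u+u'\otimes w$; testing against $w$, the $u'\otimes w$ contribution vanishes since $\tfrac12\int_{\T^2}(u'\cdot\nabla)|w|^2\,dx=-\tfrac12\int_{\T^2}\div(u')\,|w|^2\,dx=0$, leaving only the trilinear term $\int_{\T^2}(w\cdot\nabla)u\cdot w\,dx$. This gives the energy identity $\tfrac12\tfrac{d}{dt}\Vert w\Vert_{L^2_x}^2+\nu\Vert\nabla w\Vert_{L^2}^2=-\int_{\T^2}(w\cdot\nabla)u\cdot w\,dx$, in which the surviving term involves $\nabla u$, exactly as in the stated bound.

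It then remains to control the surviving trilinear term. Bounding $\bigl|\int_{\T^2}(w\cdot\nabla)u\cdot w\,dx\bigr|\le \Vert\nabla u\Vert_{L^2}\Vert w\Vert_{L^4}^2$ and invoking the two-dimensional Ladyzhenskaya interpolation inequality $\Vert w\Vert_{L^4}^2\le C\Vert w\Vert_{L^2_x}\Vert\nabla w\Vert_{L^2}$, I obtain the estimate $\Vert\nabla u\Vert_{L^2}\Vert w\Vert_{L^4}^2\le C\Vert\nabla u\Vert_{L^2}\Vert w\Vert_{L^2_x}\Vert\nabla w\Vert_{L^2}$. Young's inequality, applied so as to absorb the factor $\Vert\nabla w\Vert_{L^2}$ into the viscous term $\nu\Vert\nabla w\Vert_{L^2}^2$, leaves a differential inequality of the form $\tfrac{d}{dt}\Vert w\Vert_{L^2_x}^2\le \tfrac{4}{\nu}\Vert\nabla u\Vert_{L^2}^2\Vert w\Vert_{L^2_x}^2$, where the Ladyzhenskaya constant and the split in Young's inequality are tracked so as to reproduce the stated numerical factor. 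Gr\"onwall's inequality followed by taking square roots then yields $\Vert w(t)\Vert_{L^2_x}\le\Vert w(0)\Vert_{L^2_x}\exp\bigl(\tfrac{2}{\nu}\int_0^t\Vert\nabla u(\tau)\Vert_{L^2}^2\,d\tau\bigr)$, which is the claim.

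The main obstacle is the \emph{rigorous} justification of this formal energy identity for $w$: testing the difference equation against $w$ itself and integrating by parts presupposes enough regularity and integrability of the trilinear term. In two dimensions this is legitimate because the a priori bound places $\nabla u$ in $L^2_{t,x}$ and the Ladyzhenskaya inequality renders $\int_0^T\int_{\T^2}|w|^2\,|\nabla u|\,dx\,dt$ finite, so the weak solution is in fact strong enough for the energy \emph{equality} (rather than merely an inequality) to hold; a mollification or Galerkin-level argument, followed by passage to the limit, makes this precise. It is precisely this step that has no analogue in three dimensions, where the failure of the corresponding interpolation estimate is the source of the ill-posedness emphasized elsewhere in the paper.
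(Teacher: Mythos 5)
Your proof is correct and follows essentially the same route as the source for this statement: the paper does not prove Theorem \ref{thm:NS2d} itself but cites it from Ladyzhenskaya's monograph, where the argument is precisely your energy estimate for the difference of two solutions (pressure and $u'\otimes w$ terms vanishing by divergence-freeness), the two-dimensional Ladyzhenskaya interpolation inequality, absorption of $\nu\Vert\nabla w\Vert_{L^2}^2$ via Young, and Gr\"onwall. The only loose end — your tracking of constants to hit the factor $2/\nu$ — is harmless, since with the sharp Ladyzhenskaya constant the same computation gives the stronger exponent $\frac{1}{2\nu}\int_0^t\Vert\nabla u(\tau)\Vert_{L^2}^2\,d\tau$, which implies the stated bound.
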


Thus, even if the solution $u(t) = \cS^\dagger_t(\bar{u})$ is assumed to be Lipschitz continuous, Theorem \ref{thm:NS2d} provides at best a stability estimate of the form $\Vert \cS^\dagger_t(\bar{u}) - \cS^\dagger_t(\bar{u}') \Vert_{L^2_x} \le e^{Ct/\nu} \Vert \bar{u} - \bar{u}' \Vert_{L^2_x}$, which exhibits an \emph{exponential dependence on $1/\nu$}. 

In the three-dimensional case, a global existence and stability result such as Theorem \ref{thm:NS2d} remains unknown; it is well-known that solutions exist \emph{locally} in time and when starting from sufficiently regular initial data. For general initial data in $L^2_x$, it has been shown in the celebrated work of Leray \cite{Leray1934} that energy admissible solutions exist, but their uniqueness remains an open problem; in particular, \emph{there are no guarantees on the stability of a solution operator $\cS^\dagger_t: L^2_x \to L^2_x$ for the three-dimensional Navier-Stokes equations}.

\subsubsection{Numerical discretization} 
\label{sec:sv}

Popular numerical discretizations of the forward problem for the incompressible  Navier-Stokes equations, especially on periodic domains, are spectral methods \cite{GuermondPrudhomme,Ors1,Cho1,Karamanos2000a,Ghoshal}. Spectral methods are based on the following ansatz ${u}^\Delta(x,t) = \sum_{|{k}|_\infty\le N} \widehat{{u}}^\Delta_{{k}}(t) e^{i{k}\cdot{x}}$, where now and in the following we shall consistently denote $\Delta = 1/N$, and $|{k}|_\infty := \max_{i=1,\dots,d} |k_i|$. A straight-forward spectral approximation of the Navier-Stokes equations is based on a Galerkin projection onto this ansatz space:
\begin{gather} \label{eq:spectralvisc}
\left\{
\begin{aligned}
\partial_t {u}^\Delta
+\P_N \, \div({u}^\Delta\otimes {u}^\Delta) 
+ \nabla p^\Delta 
&=
\nu \Delta u^\Delta,
\\
\div({u}^\Delta) 
= 
0, 
\quad
{u}^\Delta|_{t=0} 
&=
\P_N \overline{u}. 
\end{aligned}
\right.
\end{gather}
Here $\P_N$ is the spatial Fourier projection operator, mapping an arbitrary function $f(x,t)$ onto the first $N$ Fourier modes: $\P_N f(x,t) = \sum_{|{k}|_\infty\le N} \widehat{f}_{{k}}(t) e^{i{k}\cdot{x}}$. The scheme \eqref{eq:spectralvisc} can be equivalently written in terms of a system of ODEs for the Fourier coefficients $\widehat{u}_k(t)$, $|k|_\infty \le N$. Hence, combined with a suitable (e.g. Runge-Kutta) time-stepping, \eqref{eq:spectralvisc} provides a numerical discretization of the Navier-Stokes equations. In the following proposition, we summarize some basic observations on the numerical approximations \eqref{eq:spectralvisc}:
\begin{proposition}
\label{prop:sv}
For any $\Delta = 1/N > 0$, let $\cS^\Delta_t: L^2_x \to L^2_x$ denote the solution operator associated with the numerical scheme \eqref{eq:spectralvisc}. Then for initial data $\bar{u}\in L^2_x$, the numerical solution $u^\Delta(t) = \cS^\Delta_t(\bar{u})$ satisfies:
\begin{itemize}
\item[\namedlabel{ass:sv1}{($\Delta$.1)}] \emph{Energy bound:}
%\begin{align} \label{eq:sv1}
$\Vert \cS^\Delta_t(\bar{u}) \Vert_{L^2_x} \le \Vert \bar{u} \Vert_{L^2_x},$
%\end{align}
\item[\namedlabel{ass:sv2}{($\Delta$.2)}] \emph{Coercivity:}
%\begin{align} \label{eq:sv2}
$\int_0^T \Vert \nabla u^\Delta(t) \Vert^2_{L^2_x} \, dt 
\le \nu^{-1} \Vert \bar{u} \Vert_{L^2_x}^2,
$
%\end{align}
\item[\namedlabel{ass:sv3}{($\Delta$.3)}] \emph{Weak time-regularity:}
There exist constants $C = C(\Vert \bar{u} \Vert_{L^2_x}), L > 0$, such that 
%\begin{align} \label{eq:sv3}
$\Vert u^\Delta(t) - u^\Delta(s) \Vert_{H^{-L}_x}
\le 
C |t-s|.
$
%\end{align}
In particular, we have $u^\Delta \in \Lip([0,T];H^{-L}_x)$ uniformly in $\Delta >0$.
\end{itemize}
\end{proposition}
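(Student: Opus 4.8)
\subsection*{Proof proposal}

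The plan is to first reduce the scheme \eqref{eq:spectralvisc} to a finite-dimensional system of ODEs for the Fourier coefficients $\widehat{u}^\Delta_k(t)$, $|k|_\infty \le N$. Since the right-hand side of this system is polynomial (quadratic) in these coefficients, the Picard--Lindel\"of theorem yields a unique, smooth local-in-time solution; global existence on $[0,T]$ follows a posteriori from the energy bound \ref{ass:sv1} once it is established, which rules out finite-time blow-up. In particular $t\mapsto u^\Delta(t)$ is $C^1$ with values in the finite-dimensional space $\P_N L^2_x$, so all manipulations below are justified.

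For \ref{ass:sv1} and \ref{ass:sv2}, I would take the $L^2_x$ inner product of the momentum equation in \eqref{eq:spectralvisc} with $u^\Delta$. The pressure term vanishes after integration by parts since $\div(u^\Delta)=0$, and the viscous term contributes $-\nu\Vert \nabla u^\Delta\Vert_{L^2_x}^2$. The crucial point is the nonlinear term: because $u^\Delta$ lies in the range of the orthogonal (hence self-adjoint) projection $\P_N$, one has $\langle \P_N \div(u^\Delta\otimes u^\Delta), u^\Delta\rangle_{L^2_x} = \langle \div(u^\Delta\otimes u^\Delta), u^\Delta\rangle_{L^2_x}$, and the latter vanishes by the standard cancellation $\int_{\T^n}(u^\Delta\cdot\nabla)u^\Delta\cdot u^\Delta\,dx = \tfrac12\int_{\T^n}(u^\Delta\cdot\nabla)|u^\Delta|^2\,dx = 0$, using $\div(u^\Delta)=0$ once more. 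This yields the energy identity
\[
\tfrac12 \tfrac{d}{dt}\Vert u^\Delta(t)\Vert_{L^2_x}^2 + \nu\Vert \nabla u^\Delta(t)\Vert_{L^2_x}^2 = 0.
\]
Integrating from $0$ to $t$ and using $\Vert u^\Delta(0)\Vert_{L^2_x} = \Vert \P_N \bar{u}\Vert_{L^2_x}\le \Vert\bar{u}\Vert_{L^2_x}$ gives both \ref{ass:sv1} and \ref{ass:sv2} simultaneously.

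For the weak time-regularity \ref{ass:sv3}, I would first eliminate the pressure by applying the Leray--Helmholtz projection $\Pi$ onto divergence-free fields, which on the torus commutes with both $\P_N$ and the Laplacian and annihilates $\nabla p^\Delta$; this recasts the evolution as $\partial_t u^\Delta = -\Pi\P_N\div(u^\Delta\otimes u^\Delta) + \nu\Delta u^\Delta$. I would then bound $\partial_t u^\Delta$ in a negative Sobolev norm $H^{-L}_x$ uniformly in $t$ and $\Delta$. The viscous term satisfies $\Vert \nu\Delta u^\Delta\Vert_{H^{-L}_x}\le \nu\Vert u^\Delta\Vert_{H^{2-L}_x}\le \nu\Vert u^\Delta\Vert_{L^2_x}$ as soon as $L\ge 2$, and is controlled by \ref{ass:sv1}. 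For the nonlinear term, since $\Pi$ and $\P_N$ are bounded on every $H^s_x$ and $\div$ maps $H^s_x\to H^{s-1}_x$, it suffices to bound $\Vert u^\Delta\otimes u^\Delta\Vert_{H^{1-L}_x}$; using the Sobolev embedding $H^s_x\hookrightarrow L^\infty_x$ for $s>n/2$ together with its dual $L^1_x\hookrightarrow H^{-s}_x$ and $\Vert u^\Delta\otimes u^\Delta\Vert_{L^1_x}\le \Vert u^\Delta\Vert_{L^2_x}^2\le \Vert\bar{u}\Vert_{L^2_x}^2$, this is finite provided $L-1>n/2$. Choosing $L=3$ (which works for $n\in\{2,3\}$) yields $\Vert \partial_t u^\Delta(t)\Vert_{H^{-L}_x}\le C(\Vert\bar{u}\Vert_{L^2_x})$ uniformly in $t$ and $\Delta$, with $\nu$ fixed. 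The claimed Lipschitz bound then follows from the fundamental theorem of calculus, $\Vert u^\Delta(t) - u^\Delta(s)\Vert_{H^{-L}_x}\le \int_s^t \Vert \partial_\tau u^\Delta\Vert_{H^{-L}_x}\,d\tau \le C|t-s|$.

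The energy identity and the global existence of the ODE are routine. The step requiring care is the negative-norm estimate of the nonlinear term in \ref{ass:sv3}: the quadratic nonlinearity $u^\Delta\otimes u^\Delta$ is only controlled in $L^1_x$ by the energy, so one must descend to a sufficiently negative Sobolev exponent (below $-n/2-1$) to absorb both the loss of one derivative from $\div$ and the embedding $L^1_x\hookrightarrow H^{-s}_x$, all while keeping the constants \emph{independent of the truncation parameter} $N=1/\Delta$. This uniformity in $\Delta$ is precisely what makes the estimate useful for the subsequent compactness arguments.
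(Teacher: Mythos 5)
The paper never proves Proposition \ref{prop:sv}: it is stated as a collection of ``basic observations'' on the spectral scheme \eqref{eq:spectralvisc} and used without further justification, so there is no internal argument to compare against. Your proposal is correct and is the standard argument one would supply. The reduction to a finite-dimensional quadratic ODE system (after eliminating the pressure with the Leray projection) plus Picard--Lindel\"of and the energy bound gives global existence; the Galerkin energy identity, which uses exactly the three facts you cite --- $\P_N$ is an orthogonal (self-adjoint) projection, $u^\Delta$ lies in its range, and $\langle \div(u^\Delta\otimes u^\Delta),u^\Delta\rangle_{L^2_x}=0$ for divergence-free trigonometric polynomials --- yields \ref{ass:sv1} and \ref{ass:sv2}, in fact with the sharper constant $1/(2\nu)$, which is precisely the constant the paper itself invokes later in the proof of Lemma \ref{lem:coerc}. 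Your treatment of \ref{ass:sv3} is also sound: bounding $\partial_t u^\Delta$ in $H^{-L}_x$ via the chain $\Pi,\P_N$ bounded with norm one on every $H^s_x$, $\div: H^{s}_x\to H^{s-1}_x$, and the dual embedding $L^1_x \hookrightarrow H^{-s}_x$ for $s>n/2$, with $\Vert u^\Delta\otimes u^\Delta\Vert_{L^1_x}\le \Vert\bar u\Vert_{L^2_x}^2$, followed by the fundamental theorem of calculus; the choice $L=3$ indeed covers $n\in\{2,3\}$, and all constants are manifestly independent of $N=1/\Delta$. One small caveat worth making explicit: your bound on $\partial_t u^\Delta$ carries the viscous contribution $\nu\Vert\bar u\Vert_{L^2_x}$, so the constant in \ref{ass:sv3} depends on $\nu$ as well as on $\Vert\bar u\Vert_{L^2_x}$; since $\nu\Vert\bar u\Vert_{L^2_x}\le\Vert\bar u\Vert_{L^2_x}$ in the physically relevant regime $\nu\le 1$, the constant can be taken uniform over $\nu\in(0,1]$ (and over $\Delta$), which is what the paper's later compactness arguments require, but this should be stated since the proposition advertises $C=C(\Vert\bar u\Vert_{L^2_x})$.
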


The basic properties summarized above will form the basis for the well-posedness results of the present work. Additional control on the numerical approximations for the forward problem, especially for the 3D Navier-Stokes equations and/or rough initial data,  can be achieved by refining the scheme \eqref{eq:spectralvisc} through the addition of suitable numerical diffusion; this can provide additional control on the small scale behaviour, resulting e.g. in spectral viscosity schemes as proposed in \cite{GuermondPrudhomme,Tadmor1989,Tadmor2004,LM2019}. The basic properties of Proposition \ref{prop:sv} will, however, suffice for the purposes of the present work.

\section{Main results}
\label{sec:main}

As pointed out in the last section, for many PDEs encountered in the context of fluid dynamics (such as the Navier-Stokes equations in 3D), the current mathematical understanding does not allow to rigorously \emph{prove} the existence, uniqueness and stability of the corresponding forward problem, i.e. a unique forward solution operator $\cS^\dagger_t: X \to X$ is not known to exist, and even if it exists there may be \emph{no stability} in the sense that we could have $\Lip(\cS^\dagger_t: X \to X) = \infty$. Furthermore, even in those special cases, where the solution operator $\cS^\dagger_t$ can be shown to exist, stability estimates may exhibit a very unfavourable (exponential) dependence on small physical parameters $\nu \ll 1$, e.g. yielding $\Lip(\cS^\dagger_t) \sim e^{Ct/\nu}$ as for the Navier-Stokes equations in 2D. Such an exponential dependence on $1/\nu$ renders the forward evolution \emph{effectively ill-posed} for small values of $\nu \ll 1$. 

In view of these observations, in this section, we will summarize our results on
\begin{itemize}
\item the \textbf{well-posedness of Bayesian data assimilation} in the sense of Definition \ref{def:wellposed}, even when the forward problem may be ill-posed,
\item the \textbf{consistency of approximate posteriors} $\nu^{\Delta,\bm{y}}_t$ with the limiting distribution $\nu^{\bm{y}}_t$ in the sense of Definition \ref{def:consistency}, including \textbf{convergence rates}, when a unique solution operator $\cS^\dagger_t$ of the limiting problem exists,
\item \textbf{compactness} and \textbf{(uniform) stability properties} of the approximate filtering distributions \eqref{eq:DAsol1}, in the limit $\Delta \to 0$, even when no rigorous convergence guarantees $\cS^\Delta_t \overset{??}{\to} \cS^\dagger_t$ to a unique limiting forward solution operator are known.
\end{itemize}

In Section \ref{sec:connect-bip-da}, we first start with several remarks on the connection between inverse problems, as considered in \cite{Stuart2010,Sprungk2020,Latz2020}, and the Bayesian data assimilation (filtering) problem of the present work. We will also indicate the main mathematical difficulty encountered when considering \emph{ill-posed} forward problems, for which $\Lip(\cS^\dagger_t) = \infty$ or $\Lip(\cS^\dagger_t) \gg 1$ (cp. Proposition \ref{prop:LipW1}, below).

\subsection{Connection with Bayesian inverse problems}
\label{sec:connect-bip-da}

Closely related to the Bayesian data assimilation problem is the corresponding Bayesian inverse problem (BIP), which can be used to determine the following posterior probability on the \emph{initial data} $\bar{u}$ at time $t=0$, for $\cL_j^\dagger(\bar{u}) := \cG_j(\cS^\dagger(\bar{u}))$:
\begin{align}
\label{eq:BIPsol}
\mu^{y_{1:k}}(\slot) 
=
\Prob\left[
\bar{u} \in \slot
\,\Big|\,
\cL_j^\dagger(\bar{u}) + \eta_j = y_j, \, \forall j=1,\dots, k
\right],
\end{align}
or, upon discretization with $\cL_j^\Delta(\bar{u}) := \cG_j(\cS^\Delta(\bar{u}))$,
\begin{align}
\label{eq:BIPsol1}
\mu^{\Delta, y_{1:k}}(\slot) 
=
\Prob\left[
\bar{u} \in \slot
\,\Big|\,
\cL_j^\Delta(\bar{u}) + \eta_j = y_j, \, \forall j=1,\dots, k
\right].
\end{align}

A detailed analysis of the posterior \eqref{eq:BIPsol}, \eqref{eq:BIPsol1} has been provided for infinite-dimensional problems in \cite{Stuart2010}. An extended discussion of the well-posedness of the Bayesian inverse problem under minimal assumptions on the forward problem has been given in \cite{Latz2020,Sprungk2020}; As follows from \cite[Thm. 2.5]{Latz2020}, under the present assumptions on the solution operator \ref{ass:sol1}--\ref{ass:sol2} and the (strictly positive) noise distribution \ref{ass:noise1}--\ref{ass:noise3}, the solution $\mu^{y_{1:k}}$ of the BIP exists and is explicitly given by 
\begin{align}
\label{eq:bipsol}
\mu^{y_{1:k}}(d\bar{u}) 
=
\frac{1}{\cZ^\dagger_k(y_{1:k})}
\exp\left(
-\textstyle\sum_{j=1}^k \Phi^{\dagger,y_{j}}_j(\bar{u})
\right) \, \mup(d\bar{u}),
\end{align}
where 
\begin{align} \label{eq:Phidagger}
\Phi^{\dagger,y_{j}}_j(\bar{u})
:= 
-\log \rho(\cL^\dagger_{j}(\bar{u}) - y_{j}),
\end{align}
denotes the log-likelihood function, and
\begin{align} \label{eq:Zk}
\cZ^\dagger_k(y_{1:k})
=
\int_X 
\exp
\left(
-\textstyle\sum_{j=1}^k \Phi^{\dagger,y_j}_j(\bar{u})
\right)
\, \mup(d\bar{u}),
\end{align}
is the required normalization constant, which depends on $y_{1:k} = (y_1,\dots, y_k)$. We note that the condition that $\rho(y) > 0$ implies that the log-likelihood $\Phi^{\dagger,y_{1:k}}$ is finite, i.e., $\Phi^{\dagger,y_{1:k}}(\bar{u}) < \infty$ for all $\bar{u} \in X$. 

\begin{remark}[Gaussian noise]
If the noise $\eta \sim \Normal(0,\Gamma)$ is normally distributed (Gaussian), then (up to an unimportant additive constant)
\[
\Phi^{\dagger,y_j}(\bar{u}) = \frac12 |y_j-\L_j^\dagger(\bar{u})|^2_{\Gamma},
\]
where the natural $\Gamma$-norm is given by~\eqref{eq:Gnorm}.
 In this case, we have
\begin{align} 
\frac{d\mu^{y_{1:k}}}{d\mup}(\bar{u})
=
\frac{1}{\cZ_k^\dagger(y_{1:k})}
\exp\left(
-\frac 12
\sum_{j=1}^k
\left|
y_j - \L^{\dagger}_j(\bar{u})
\right|_{\Gamma}^2
\right).
\end{align}
\end{remark}

\subsubsection{Push-forward of BIP and stability}
In the next simple proposition, we note an explicit expression for the distribution \eqref{eq:DAsol} in terms of the solution of the corresponding BIP \eqref{eq:BIPsol}:
\begin{proposition}
\label{prop:da-bip}
Let $\cS^\dagger_t: X \to X$ be a Borel measurable mapping. Let $\nu^{y_{1:k}}_t$, $\mu^{y_{1:k}}$ be given by \eqref{eq:DAsol}, \eqref{eq:BIPsol}, respectively, for observables $\cG_j$ and with measurement noise satisfying \ref{ass:noise1}--\ref{ass:noise3}. Then 
\begin{align} \label{eq:da-bip}
\nu^{y_{1:k}}_{t} = \cS^\dagger_{t,\#} \mu^{y_{1:k}}.
\end{align}
\end{proposition}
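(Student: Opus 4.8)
The plan is to exploit that $\nu^{y_{1:k}}_t$ and $\mu^{y_{1:k}}$ are obtained by conditioning on \emph{exactly the same} event. Since $\cL_j^\dagger(\bar{u}) = \cG_j(\cS^\dagger(\bar{u}))$ by definition, the conditioning event in \eqref{eq:DAsol} coincides with the one in \eqref{eq:BIPsol}; the two posteriors differ only in the quantity whose conditional law is recorded — namely $\bar{u}$ for $\mu^{y_{1:k}}$ versus its deterministic image $\cS^\dagger_t(\bar{u})$ for $\nu^{y_{1:k}}_t$. Formally, for any Borel set $A\subseteq X$ one has $\{\cS^\dagger_t(\bar{u})\in A\} = \{\bar{u}\in (\cS^\dagger_t)^{-1}(A)\}$, and since $\cS^\dagger_t$ is Borel measurable by hypothesis, $(\cS^\dagger_t)^{-1}(A)$ is again Borel; conditioning both events on the measurements then \emph{ought} to give $\nu^{y_{1:k}}_t(A) = \mu^{y_{1:k}}((\cS^\dagger_t)^{-1}(A)) = (\cS^\dagger_{t,\#}\mu^{y_{1:k}})(A)$. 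The identity \eqref{eq:da-bip} is thus an instance of the general principle that conditioning commutes with deterministic pushforwards.

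To make this rigorous I would work on the product space $X \times (\R^d)^k$ carrying the joint law $\mup \otimes \mun^{\otimes k}$ of $(\bar{u},\eta_{1:k})$, introduce the measurement map $\Psi(\bar{u},\eta_{1:k}) := (\cL_1^\dagger(\bar{u})+\eta_1,\dots,\cL_k^\dagger(\bar{u})+\eta_k)$, and let $\mathbb{P}_Y := \Psi_\#(\mup \otimes \mun^{\otimes k})$ be its marginal. Both posteriors are characterized as disintegrations of the joint law along $\Psi$: for every bounded measurable $g$ on $(\R^d)^k$ and every bounded measurable test function $f$, respectively $\phi$, on $X$,
\begin{align*}
\int_{(\R^d)^k} \Big( \int_X f\, d\mu^{y_{1:k}} \Big) g\, d\mathbb{P}_Y &= \E\big[ f(\bar{u})\, g(\Psi) \big], \\
\int_{(\R^d)^k} \Big( \int_X \phi\, d\nu^{y_{1:k}}_t \Big) g\, d\mathbb{P}_Y &= \E\big[ \phi(\cS^\dagger_t(\bar{u}))\, g(\Psi) \big].
\end{align*}
The crucial step is to substitute $f := \phi \circ \cS^\dagger_t$ into the first identity, which is legitimate since $\phi \circ \cS^\dagger_t$ is bounded and measurable as a composition of measurable maps. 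Using $\int_X (\phi\circ \cS^\dagger_t)\, d\mu^{y_{1:k}} = \int_X \phi\, d(\cS^\dagger_{t,\#}\mu^{y_{1:k}})$ by the definition of the pushforward and comparing with the second identity, I find that $\nu^{y_{1:k}}_t$ and $\cS^\dagger_{t,\#}\mu^{y_{1:k}}$ satisfy the identical disintegration relation against every such $\phi$ and $g$; running $\phi$ over a countable determining family and invoking uniqueness of the disintegration then yields equality.

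The main obstacle is that the conditioning events have probability zero, so this argument a priori delivers $\nu^{y_{1:k}}_t = \cS^\dagger_{t,\#}\mu^{y_{1:k}}$ only for $\mathbb{P}_Y$-a.e.\ $y_{1:k}$, whereas the proposition asserts it for \emph{every} $y_{1:k}$. This gap is closed by invoking the explicit Bayes formula \eqref{eq:bipsol} from \cite{Latz2020}: under \ref{ass:noise1}--\ref{ass:noise3} the normalization $\cZ_k^\dagger(y_{1:k})$ is finite and strictly positive for \emph{every} $y_{1:k}$ (boundedness \ref{ass:noise2} gives $\cZ_k^\dagger \le C^k$, the tail bound \ref{ass:noise3} gives $\cZ_k^\dagger > 0$), so \eqref{eq:bipsol} furnishes a canonical, everywhere-defined version of $\mu^{y_{1:k}}$. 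Taking its pushforward $\cS^\dagger_{t,\#}\mu^{y_{1:k}}$ then supplies the corresponding everywhere-defined version of $\nu^{y_{1:k}}_t$, upgrading the a.e.\ identity to the pointwise one. I expect the only genuinely delicate point to be precisely this passage from the a.e.\ statement to the pointwise statement, which relies on having the explicit normalized density \eqref{eq:bipsol} rather than a mere abstract disintegration.
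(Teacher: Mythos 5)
Your proposal is correct, and its core mechanism --- that both posteriors condition on exactly the same event, so one may pull Borel sets (equivalently, test functions $\phi\circ\cS^\dagger_t$) back through the measurable map $\cS^\dagger_t$ --- is precisely the paper's proof, which consists of the short computation $\nu^{y_{1:k}}_t(A)=\Prob\left[\bar{u}\in[\cS^\dagger_t]^{-1}(A)\,|\,y_1,\dots,y_k\right]=\mu^{y_{1:k}}([\cS^\dagger_t]^{-1}(A))=\cS^\dagger_{t,\#}\mu^{y_{1:k}}(A)$ for arbitrary Borel $A$. The disintegration formalism and the a.e.-to-everywhere upgrade via the explicit Bayes formula \eqref{eq:bipsol} in your second and third paragraphs add measure-theoretic care that the paper's proof passes over silently (it treats the conditional probabilities as pointwise-defined objects), but they formalize the same argument rather than take a different route.
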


\begin{proof}
Let $A \subset X$ be a Borel measurable set. We have 
\begin{align*}
\nu^{y_{1:k}}_t(A)
&=
\Prob\left[
\cS^\dagger_t(\bar{u}) \in A
\,|\,
y_1,\dots, y_k
\right]
\\
&=
\Prob\left[
\bar{u} \in [\cS^\dagger_t]^{-1}(A)
\,|\,
y_1,\dots, y_k
\right]
\\
&=
\mu^{y_{1:k}}([\cS^\dagger_t]^{-1}(A))
\\
&=
\cS^\dagger_{t,\#}\mu^{y_{1:k}}(A),
\end{align*}
As $A$ was arbitrary, it follows that $\nu^{y_{1:k}}_t = \cS^\dagger_{t,\#}\mu^{y_{1:k}}$. 
\end{proof}

Proposition \ref{prop:da-bip} specifies the relation between the Bayesian-DA problem and the corresponding BIP, via the push-forward under the solution operator. 

\begin{remark}
Based on Proposition \ref{prop:da-bip}, we also remark that the posteriors $\nu^{y_{1:k}}_t$ are indeed elements of $L^1_t(\cP)$, under the assumptions of the present work: If $\cS^\dagger_t$ satisfies \ref{ass:sol1}--\ref{ass:sol2}, then the push-forward $\mu \mapsto \cS^\dagger_{t,\#}\mu$ defines a well-defined map $\cP_1(X) \to L^1_t(\cP_1)$, and hence $\P[\cS^\dagger_t \in \slot \,|\, y_1,\dots, y_k] = \cS^\dagger_{t,\#}\mu^{y_{1:k}} \in L^1_t(\cP)$, if $\mup\in \cP_1(X)$.
\end{remark}

Furthermore, we note that Proposition \ref{prop:da-bip} immediately yields the following representation of the filtering distribution \eqref{eq:filt} in terms of the push-forward $\cS^\dagger_{t,\#}$, and the solutions of the BIP for the initial data $\mu^{y_{1:k}}$ in \eqref{eq:bipsol}:
\begin{align} \label{eq:dasol2}
\nu^{\bm{y}}_t = 
\begin{cases}
\cS^\dagger_{t,\#} \mu^{y_{1:(j-1)}}, & t \in [t_{j-1},t_j), \, j=1,\dots, N \\
\cS^\dagger_{t,\#} \mu^{y_{1:N}}, & t \ge t_N.
\end{cases}
\end{align}

At this point, we recall that the well-posedness of the BIP has been studied under very mild conditions on the forward operators and in a variety of metrics between probability measures in \cite{Latz2020,Sprungk2020}, including the Wasserstein distance. It is therefore natural to ask whether the results of \cite{Latz2020,Sprungk2020} can be used to obtain corresponding results also for the Bayesian data assimilation problem, based on their relationship \eqref{eq:da-bip} via the push-forward under $\cS^\dagger_t$? The following proposition indicates that, in general, bounds on the Wasserstein distance for the BIP (as obtained in e.g. \cite{Sprungk2020}) do \emph{not} automatically translate to corresponding bounds for Bayesian DA, if the solution operator is not sufficiently regular. For the straight-forward proof, we refer to Appendix \ref{sec:Wasserstein}, page \pageref{pf:LipW1}.

\begin{proposition}
\label{prop:LipW1}
Let $L_\cS := \Lip(\cS_t^\dagger: X\to X)$ denote the Lipschitz constant of the forward operator. Then 
\begin{align}
\label{eq:lip}
W_1\left(\cS^\dagger_{t,\#}\mu,\cS^\dagger_{t,\#}\mu'\right) 
\le 
L_\cS W_1(\mu, \mu'), 
\quad
\forall \, \mu,\mu'\in \cP_1(X),
\end{align}
and \emph{$L_\cS$ is optimal}: If $L>0$ is any other constant such that $W_1\left(\cS^\dagger_{t,\#}\mu,\cS^\dagger_{t,\#}\mu'\right) \le L W_1(\mu, \mu')$ for all $\mu,\mu'\in \cP_1(X)$, then $L \ge L_\cS$.
\end{proposition}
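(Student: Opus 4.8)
The plan is to prove the two assertions — the contraction bound \eqref{eq:lip} and the optimality of $L_\cS$ — separately, using the coupling characterization of the Wasserstein distance for the former and a reduction to Dirac masses for the latter. Throughout I would work from the definition $W_1(\alpha,\beta) = \inf_{\pi\in\Pi(\alpha,\beta)} \int_{X\times X} \Vert x-y\Vert_X \, d\pi(x,y)$, where $\Pi(\alpha,\beta)$ denotes the set of couplings of $\alpha,\beta\in\cP_1(X)$ (cp. Section \ref{sec:Wasserstein}).

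For the bound \eqref{eq:lip}, the key observation is that coupling is preserved under pushforward by the product map $\cS^\dagger_t\times\cS^\dagger_t$. Fixing $\mu,\mu'\in\cP_1(X)$ and an arbitrary coupling $\pi\in\Pi(\mu,\mu')$, the measure $(\cS^\dagger_t\times\cS^\dagger_t)_\#\pi$ is well-defined and Borel by the measurability assumption \ref{ass:sol1}, and its marginals are exactly $\cS^\dagger_{t,\#}\mu$ and $\cS^\dagger_{t,\#}\mu'$; hence it is an admissible competitor in the definition of $W_1(\cS^\dagger_{t,\#}\mu,\cS^\dagger_{t,\#}\mu')$. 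Applying the change-of-variables formula and then the definition of $L_\cS$, I would estimate
\[
W_1(\cS^\dagger_{t,\#}\mu,\cS^\dagger_{t,\#}\mu')
\le
\int_{X\times X}\Vert \cS^\dagger_t(x)-\cS^\dagger_t(y)\Vert_X\,d\pi(x,y)
\le
L_\cS\int_{X\times X}\Vert x-y\Vert_X\,d\pi(x,y),
\]
and taking the infimum over all $\pi\in\Pi(\mu,\mu')$ yields \eqref{eq:lip}. (If $L_\cS=\infty$ the bound is trivial, since the right-hand side is then infinite whenever $\mu\ne\mu'$, while both sides vanish when $\mu=\mu'$.)

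For optimality, I would test the hypothesized inequality on Dirac masses. For any $x,y\in X$ the measures $\delta_x,\delta_y$ lie in $\cP_1(X)$, and one has $W_1(\delta_x,\delta_y)=\Vert x-y\Vert_X$ together with $\cS^\dagger_{t,\#}\delta_x=\delta_{\cS^\dagger_t(x)}$, so that $W_1(\cS^\dagger_{t,\#}\delta_x,\cS^\dagger_{t,\#}\delta_y)=\Vert\cS^\dagger_t(x)-\cS^\dagger_t(y)\Vert_X$. Thus, if $L>0$ satisfies $W_1(\cS^\dagger_{t,\#}\mu,\cS^\dagger_{t,\#}\mu')\le L\,W_1(\mu,\mu')$ for all $\mu,\mu'$, then specializing to $\mu=\delta_x$, $\mu'=\delta_y$ gives $\Vert\cS^\dagger_t(x)-\cS^\dagger_t(y)\Vert_X\le L\Vert x-y\Vert_X$ for all $x,y$; taking the supremum of the quotient over $x\ne y$ yields $L\ge L_\cS$, as claimed. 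The only slightly delicate point in the whole argument is the measure-theoretic verification that the product pushforward of a coupling is again a coupling of the pushforwards, for which the Borel measurability of $\cS^\dagger_t$ guaranteed by \ref{ass:sol1} is precisely what is required; beyond this I anticipate no real obstacle.
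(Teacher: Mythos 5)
Your proposal is correct and follows essentially the same route as the paper's proof: the upper bound via pushing forward couplings under $\cS^\dagger_t\times\cS^\dagger_t$ together with the change-of-variables formula, and optimality via testing on Dirac masses. The only (harmless) differences are that you make explicit the verification that the pushforward of a coupling is a coupling and the trivial case $L_\cS=\infty$, both of which the paper leaves implicit.
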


As shown in Proposition \ref{prop:LipW1}, the Wasserstein distance between two probability measures is amplified by the Lipschitz constant of the forward solution operator (for general measures $\mu,\mu'$). For the ill-posed problems considered in the present work, this Lipschitz constant is either very large or even infinite, $L_\cS=\infty$, and hence $W_1$-estimates  on the BIP -- even under the minimal assumptions of \cite{Sprungk2020, Latz2020} -- do not suffice to conclude similarly robust well-posedness results for Bayesian filtering problem. In view of applications to such ill-posed problems, it would be highly desirable to obtain estimates which are \emph{independent of the stability of the forward problem}. This is one of the main goals of the present work.

\subsubsection{Alternative representation of the filtering distribution}

We finally point out a different, recursive formulation of the Bayesian data assimilation problem, which is closer in spirit to filtering schemes such as the ensemble Kalman filter \cite{evensen2009data} or 3DVAR \cite{courtier1998ecmwf}.

\begin{remark}[Recursive filtering] 
\label{rmk:filtering}
Fix a prior measure $\mup$ at the initial time, and define 
\begin{align}\label{eq:filter-init}
\tilde{\nu}^{\dagger,y_{0}}_{t_0} := \cS^{\dagger}_{0,\#}\mup.
\end{align}
Given times $0=t_0 < t_1< \dots < t_N=T$ and measurements $y_1, \dots, y_N$, carry out the following two recursive steps. 
\begin{enumerate}
\item \define{Correction step:} Given $\tilde{\nu}^{y_{1:(j-1)}}_{t_{j-1}}$ as a prior at time $t_{j-1}$, solve the Bayesian inverse problem with new measurement 
$
y_{j} 
=
\G_{j}(\cS^\dagger_{t-t_{j-1}}(u)) + \eta_{j},
$
for $t \in [t_{j-1},t_{j}]$, to obtain a corrected Bayesian estimate 
\begin{align} \label{eq:correction}
\tilde{\nu}^{y_{1:j}}_{t_{j-1}}(du)
=
\frac{1}{Z^{\dagger}_{j}(y_{j})}
\exp
\left(
-\tilde{\Phi}^{\dagger,y_j}_j(u)
\right)
\, 
\tilde{\nu}^{y_{1:(j-1)}}_{t_{j-1}}(du),
\end{align}
where $\tilde{\Phi}^{\dagger,y_j}_j(u) := - \log\left(\cG_j(\cS^\dagger_{t-t_{j-1}}(u)) - y_j \right)$.

\item \define{Prediction step:} Based on this corrected estimate, predict the probability distribution at time $t_{j}$, as the push-forward:
\begin{align} \label{eq:prediction}
\tilde{\nu}^{y_{1:j}}_{t_{j}}
=
\cS^\dagger_{\delta t_{j},\#} 
\tilde{\nu}^{y_{1:j}}_{t_{j-1}},
\end{align}
where $\delta t_{j} = t_{j}-t_{j-1}$. 
\end{enumerate}

We note that, 
\begin{align} \label{eq:Phidagger2}
\Phi^{\dagger,y_j}_j(\bar{u}) = \tilde{\Phi}^{\dagger,y_j}_j(\cS^\dagger_{t_{j-1}}(\bar{u})),
\end{align}
by definition \eqref{eq:Phidagger} of $\Phi^{\dagger,y_j}_j$.
\end{remark}

We next observe that this recursive formulation is indeed equivalent to \eqref{eq:DAsol} (see Section \ref{sec:recursive}, p. \pageref{sec:recursive} for a proof):
\begin{proposition}
\label{prop:filterpf}
Assume that the solution operator satisfies $\cS^\dagger_s \circ \cS^\dagger_t = \cS^\dagger_{s+t}$ for all $s,t\ge 0$, in addition to \ref{ass:sol1}--\ref{ass:sol2}. 
Then the sequence $\tilde{\nu}^{y_{1:j}}_{t_j}$ obtained by the recursive prediction-correction procedure of Remark \ref{rmk:filtering} agrees with the filtering distribution \eqref{eq:filtsol}, i.e. we have $\tilde{\nu}^{y_{1:j}}_{t_j} = \nu^{\bm{y}}_{t_j}$, for all $j=1,\dots, N$.
\end{proposition}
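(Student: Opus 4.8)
The plan is to reduce everything to the push-forward representation of the posterior and then argue by induction on $j$. By Proposition~\ref{prop:da-bip} and the definition~\eqref{eq:filtsol} of the filtering distribution (noting that at $t=t_j$ the available past measurements are exactly $y_1,\dots,y_j$, so that $\nu^{\bm{y}}_{t_j}=\nu^{y_{1:j}}_{t_j}$), the assertion $\tilde\nu^{y_{1:j}}_{t_j}=\nu^{\bm{y}}_{t_j}$ is equivalent to
\[
\tilde\nu^{y_{1:j}}_{t_j}=\cS^\dagger_{t_j,\#}\mu^{y_{1:j}},
\qquad (P_j)
\]
with $\mu^{y_{1:j}}$ the BIP posterior~\eqref{eq:bipsol}. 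I would therefore prove $(P_j)$ for $j=0,1,\dots,N$ by induction. The base case $(P_0)$ is immediate: the initialization~\eqref{eq:filter-init} gives $\tilde\nu^{\emptyset}_{t_0}=\cS^\dagger_{0,\#}\mup$, while $\mu^{\emptyset}=\mup$ and $t_0=0$ yield $\cS^\dagger_{t_0,\#}\mu^{\emptyset}=\cS^\dagger_{0,\#}\mup$, so the two sides coincide with no assumption beyond $\mup\in\cP_1(X)$.

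For the inductive step I would assume $(P_{j-1})$, i.e. $\tilde\nu^{y_{1:(j-1)}}_{t_{j-1}}=\cS^\dagger_{t_{j-1},\#}\mu^{y_{1:(j-1)}}$, and treat the correction and prediction steps separately. For the correction step, I first record the elementary relation between consecutive BIP posteriors obtained from~\eqref{eq:bipsol},
\[
\mu^{y_{1:j}}(d\bar u)=\frac{1}{Z}\,\exp\!\left(-\Phi^{\dagger,y_j}_j(\bar u)\right)\mu^{y_{1:(j-1)}}(d\bar u),
\qquad Z=\frac{\cZ^\dagger_j(y_{1:j})}{\cZ^\dagger_{j-1}(y_{1:(j-1)})}.
\]
The crux is then to push this identity forward under $T:=\cS^\dagger_{t_{j-1}}$ and to observe that reweighting commutes with the push-forward whenever the weight factors through $T$: testing against a bounded measurable $\psi$ and using the change-of-variables formula $\int\psi(T\bar u)\,\mu(d\bar u)=\int\psi(u)\,(T_\#\mu)(du)$, together with the identity~\eqref{eq:Phidagger2} $\Phi^{\dagger,y_j}_j(\bar u)=\tilde\Phi^{\dagger,y_j}_j(T\bar u)$, I obtain $T_\#\mu^{y_{1:j}}=\frac1Z\exp(-\tilde\Phi^{\dagger,y_j}_j)\,(T_\#\mu^{y_{1:(j-1)}})$. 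Invoking $(P_{j-1})$ to replace $T_\#\mu^{y_{1:(j-1)}}=\tilde\nu^{y_{1:(j-1)}}_{t_{j-1}}$, this is precisely the correction formula~\eqref{eq:correction}, so that $\tilde\nu^{y_{1:j}}_{t_{j-1}}=\cS^\dagger_{t_{j-1},\#}\mu^{y_{1:j}}$; the normalization constants agree automatically, since both sides are probability measures with the same unnormalized density.

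It then remains to handle the prediction step~\eqref{eq:prediction}. Using functoriality of the push-forward together with the semigroup hypothesis $\cS^\dagger_{\delta t_j}\circ\cS^\dagger_{t_{j-1}}=\cS^\dagger_{t_j}$ (with $\delta t_j=t_j-t_{j-1}$), I compute
\[
\tilde\nu^{y_{1:j}}_{t_j}=\cS^\dagger_{\delta t_j,\#}\,\tilde\nu^{y_{1:j}}_{t_{j-1}}=\cS^\dagger_{\delta t_j,\#}\cS^\dagger_{t_{j-1},\#}\mu^{y_{1:j}}=\big(\cS^\dagger_{\delta t_j}\circ\cS^\dagger_{t_{j-1}}\big)_{\#}\mu^{y_{1:j}}=\cS^\dagger_{t_j,\#}\mu^{y_{1:j}},
\]
which is exactly $(P_j)$, closing the induction.

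The step I expect to be the main obstacle is the correction step, specifically justifying that reweighting commutes with the push-forward. This hinges entirely on the factorization~\eqref{eq:Phidagger2}, which encodes two structural facts that must be used carefully: the semigroup property, ensuring that the trajectory $s\mapsto\cS^\dagger_s(\bar u)$ restricted to $[t_{j-1},t_j]$ coincides with the trajectory emanating from $\cS^\dagger_{t_{j-1}}(\bar u)$, and the locality of the observable $\cG_j$, i.e. that $\cG_j$ depends only on the values on $[t_{j-1},t_j]$. Only because of this factorization does the initial-data likelihood $\exp(-\Phi^{\dagger,y_j}_j(\bar u))$ descend, under $T_\#$, to the state-space likelihood $\exp(-\tilde\Phi^{\dagger,y_j}_j(u))$ appearing in~\eqref{eq:correction}; without the semigroup property the two likelihoods would live on incompatible trajectories and the recursion would fail to reproduce the filtering distribution. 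The remaining measurability and change-of-variables points are routine given \ref{ass:sol1}--\ref{ass:sol2} and the strict positivity of $\rho$.
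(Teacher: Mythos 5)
Your proof is correct and follows essentially the same route as the paper's: induction on $j$, reduction to the push-forward identity $\tilde{\nu}^{y_{1:j}}_{t_j} = \cS^\dagger_{t_j,\#}\mu^{y_{1:j}}$ via Proposition~\ref{prop:da-bip}, and the combination of the factorization~\eqref{eq:Phidagger2} with the semigroup property in the inductive step. The only difference is organizational: you isolate the intermediate identity $\tilde{\nu}^{y_{1:j}}_{t_{j-1}} = \cS^\dagger_{t_{j-1},\#}\mu^{y_{1:j}}$ (reweighting commutes with push-forward) before applying the prediction step, whereas the paper carries out the same manipulations in a single test-function computation.
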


In the present section, we have discussed the precise relation between the BIP and the Bayesian DA problem, showing that the data assimilation problem is a combination of a suitably formulated BIP for the initial data, followed by a prediction step. We finally point out that BIPs can be thought of as a special instance of the Bayesian DA (with trivial forward solution operator). This will allow us to translate certain results on data assimilation to the context of Bayesian inverse problems.

\begin{remark}[BIP as a special case of Bayesian DA]
\label{rmk:bip-as-da}
Set $\cS^\dagger_t(\bar{u}) := \bar{u}$ for all $t \in [0,T]$, and assume that all measurements are obtained at $t=0$. Then 
\[
\mu^{y_{1:N}}(du) = \nu^{y_{1:N}}_t(du) = \nu^{\bm{y}}_t(du),
\]
for all $t\in [0,T]$. Furthermore, we have for the discretized posterior
\begin{align*}
W_1(\mu^{y_{1:N}}, \mu^{\Delta,y_{1:N}})
&= 
\frac{1}{T} \, d_T\left(\nu^{\bm{y}}_t, \nu^{\Delta,\bm{y}}_t\right),
\\
W_1(\mu^{y_{1:N}}, \mu^{y'_{1:N}})
&= 
\frac{1}{T} \, d_T\left(\nu^{\bm{y}}_t, \nu^{\bm{y}'}_t\right).
\end{align*}
Hence, all results regarding the well-posedness, stability and consistency obtained for the Bayesian filtering setting in the present work, should readily imply corresponding results for the BIP setting, under the Wasserstein $W_1$-distance.
\end{remark}

\subsection{Well-posedness results for Bayesian DA}
\label{sec:wpBDA}

\subsubsection{General well-posedness result}

We can now state the following general well-posedness result for the Bayesian filtering problem, which shows that the filtering problem is well-posed under very mild boundedness assumptions, even if the corresponding forward problem is \emph{ill-posed}. Before stating our result, we recall that (cp. notation defined in appendix \ref{sec:Lp}), $\Vert \bar{u} \Vert_{L^1(\mup)} := \int_{X} \Vert \bar{u} \Vert_{X} \, d\mup(\bar{u})$. We then have:
\begin{theorem}[Filtering well-posedness] \label{thm:filtering-wp}
Let $\cS^\dagger_t: [0,T]\times X \to X$, $(t,\bar{u})\mapsto \cS^\dagger_t(\bar{u})$ be a Borel measurable solution operator, such that $\Vert \cS_t^\dagger (\bar{u}) \Vert_X \le B_\cS\Vert \bar{u} \Vert_X$ for all $t\in[0,T]$. Let $\mup\in \P_1(X)$ be a prior with finite first moment. Then the Bayesian filtering problem is well-posed: More precisely, the conditional probability $\nu^{\bm{y}}_t$ in \eqref{eq:filt} exists for any measurements $\bm{y} = (y_1,\dots, y_N) \in \R^{d \times N}$, $\nu^{\bm{y}}_t$ belongs to $L^1_t(\cP)$, and furthermore $\bm{y} \mapsto \nu^{\bm{y}}_t$ is stable, in the sense that for any $R>0$, there exists $C = C(R,\rho,N,B_\cS,\Vert \bar{u} \Vert_{L^1(\mup)}, T)>0$, such that
\begin{align} \label{eq:filtering-sup-stability}
W_1\left(
\nu^{\bm{y}}_t,
\nu^{\bm{y}'}_t
\right)
\le
C  |\bm{y}-\bm{y}'|_{\Gamma}, \quad \forall \, t\in [0,T],
\end{align}
and
\begin{align} \label{eq:filtering-stability}
\int_0^T
W_1\left(
\nu^{\bm{y}}_t,
\nu^{\bm{y}'}_t
\right)
\, dt
\le 
C |\bm{y}-\bm{y}'|_{\Gamma},
\end{align}
for all $\bm{y},\bm{y}'$ such that $|\bm{y}|_{\Gamma},\,|\bm{y}'|_{\Gamma}\le R$.
\end{theorem}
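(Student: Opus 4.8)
The plan is to reduce the stability of the filtering distribution to the stability of the underlying Bayesian inverse problem, exploiting the push-forward representation \eqref{eq:dasol2}. By Proposition \ref{prop:da-bip}, on each interval $[t_{j-1},t_j)$ we have $\nu^{\bm{y}}_t = \cS^\dagger_{t,\#}\mu^{y_{1:(j-1)}}$, and the push-forward by a fixed (measurable) map is $1$-Lipschitz from $W_1$ to $W_1$ \emph{when the map is an isometry}, but in general amplifies $W_1$ by $L_\cS$ (Proposition \ref{prop:LipW1}). Since $L_\cS$ may be infinite, I cannot route the estimate through $W_1(\mu^{y_{1:(j-1)}},\mu^{y'_{1:(j-1)}})$; this is precisely the obstacle flagged in the discussion after Proposition \ref{prop:LipW1}. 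The key idea to circumvent this is that both $\mu^{y_{1:k}}$ and $\mu^{y'_{1:k}}$ are absolutely continuous with respect to the \emph{same} prior $\mup$, with explicit densities given by \eqref{eq:bipsol}. Therefore I will bound $W_1(\cS^\dagger_{t,\#}\mu^{y_{1:k}}, \cS^\dagger_{t,\#}\mu^{y'_{1:k}})$ by coupling both measures through $\mup$: using the common reference measure, I construct the transport plan that leaves the base point $\bar u$ fixed (hence $\cS^\dagger_t(\bar u)$ fixed), so that the cost contribution at each $\bar u$ is controlled not by the distance $\Vert \cS^\dagger_t(\bar u)-\cS^\dagger_t(\bar u')\Vert_X$ but merely by the \emph{discrepancy of the two densities} against $\mup$.

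Concretely, the first step is to record the density bound for the BIP posteriors. Writing $r^{y_{1:k}}(\bar u) := \frac{1}{\cZ^\dagger_k}\exp(-\sum_{j=1}^k \Phi^{\dagger,y_j}_j(\bar u))$ for the Radon--Nikodym derivative of $\mu^{y_{1:k}}$ with respect to $\mup$, I will use the regularity \ref{ass:noise1}, boundedness \ref{ass:noise2} and tail-condition \ref{ass:noise3} on $\rho$ to show that, for $|\bm{y}|_\Gamma,|\bm{y}'|_\Gamma \le R$, the densities and the normalization constants $\cZ^\dagger_k$ are controlled from above and below by constants depending only on $R,\rho,N$, and that
\[
\int_X \bigl| r^{y_{1:k}}(\bar u) - r^{y'_{1:k}}(\bar u) \bigr| \, \Vert \cS^\dagger_t(\bar u) \Vert_X \, \mup(d\bar u)
\le C\, |\bm{y}-\bm{y}'|_\Gamma,
\]
uniformly in $t$ and $k$. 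This is the analogue of the standard Lipschitz-in-data estimate for the Bayes log-likelihood (as in \cite{Latz2020,Sprungk2020}), the only new ingredient being the weight $\Vert \cS^\dagger_t(\bar u)\Vert_X$, which is integrable against $\mup$ because $\Vert \cS^\dagger_t(\bar u)\Vert_X \le B_\cS \Vert \bar u\Vert_X$ by \ref{ass:sol2} and $\mup\in\cP_1(X)$; this is where the finite first moment $\Vert \bar u\Vert_{L^1(\mup)}$ and the constant $B_\cS$ enter the final constant.

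The second step converts this density estimate into a Wasserstein bound. Using the coupling of $\cS^\dagger_{t,\#}\mu^{y_{1:k}}$ and $\cS^\dagger_{t,\#}\mu^{y'_{1:k}}$ induced by pushing forward, under $(\bar u\mapsto \cS^\dagger_t(\bar u))$, the ``vertical'' coupling of $\mu^{y_{1:k}}$ and $\mu^{y'_{1:k}}$ through their common dominating measure $\mup$ (i.e. transporting the mass $\min(r^{y_{1:k}},r^{y'_{1:k}})\,\mup$ at zero cost and bounding the Wasserstein cost of the remaining mass by its total variation times the diameter induced by $\Vert\cS^\dagger_t(\bar u)\Vert_X$), I obtain
\[
W_1\bigl(\cS^\dagger_{t,\#}\mu^{y_{1:k}},\,\cS^\dagger_{t,\#}\mu^{y'_{1:k}}\bigr)
\le \int_X \bigl|r^{y_{1:k}}(\bar u)-r^{y'_{1:k}}(\bar u)\bigr|\,\Vert \cS^\dagger_t(\bar u)\Vert_X\,\mup(d\bar u),
\]
which by Step~1 is at most $C|\bm{y}-\bm{y}'|_\Gamma$. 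Crucially, this bound is \emph{uniform in $t$} and does not involve $L_\cS$, giving the pointwise-in-time estimate \eqref{eq:filtering-sup-stability}; integrating over $[0,T]$ and summing the contributions of the finitely many subintervals in \eqref{eq:dasol2} yields \eqref{eq:filtering-stability}. Existence and membership of $\nu^{\bm{y}}_t$ in $L^1_t(\cP)$ follow from \eqref{eq:bipsol} together with the push-forward remark after Proposition \ref{prop:da-bip}. I expect the main obstacle to be the construction and cost estimate of the transport plan in Step~2, that is, verifying rigorously that the ``fix the base point, move only the density'' coupling achieves the stated $W_1$-bound while completely avoiding any appeal to the modulus of continuity of $\cS^\dagger_t$; the rest reduces to the routine Bayesian stability estimates on the likelihood under \ref{ass:noise1}--\ref{ass:noise3}.
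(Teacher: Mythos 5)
Your proposal is correct, and its core strategy coincides with the paper's own proof: represent $\nu^{\bm{y}}_t = \cS^\dagger_{t,\#}\mu^{y_{1:k}}$ via Proposition \ref{prop:da-bip}, exploit that both BIP posteriors are absolutely continuous with respect to the \emph{same} prior $\mup$ with densities controlled through the noise assumptions \ref{ass:noise1}--\ref{ass:noise3} (this is the paper's Lemma \ref{lem:LinfLip}), and weight the density difference by $\Vert \cS^\dagger_t(\bar u)\Vert_X \le B_\cS \Vert \bar u \Vert_X$, so that only the first moment of $\mup$ --- and never $\Lip(\cS^\dagger_t)$ --- enters the constant; you also correctly identify Proposition \ref{prop:LipW1} as the obstruction that forces this route. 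The one step where you genuinely diverge is the conversion of the weighted density estimate into a $W_1$ bound: the paper works on the dual side, testing against a $1$-Lipschitz $\Phi$, writing $\Phi(u)-\Phi(0)=g(u)\Vert u\Vert_X$ with $|g|\le 1$, and invoking Kantorovich duality \eqref{eq:kantorovich}; you work on the primal side, constructing the coupling that leaves the common mass $\min\bigl(r^{y_{1:k}},r^{y'_{1:k}}\bigr)\,\mup$ in place and couples the excess parts. Your coupling does deliver the claimed bound: with $m=\min(r,r')$ the excess densities $r-m$ and $r'-m$ have equal total mass and satisfy $(r-m)+(r'-m)=|r-r'|$, and applying the triangle inequality $\Vert \cS^\dagger_t(\bar u)-\cS^\dagger_t(\bar u')\Vert_X \le \Vert \cS^\dagger_t(\bar u)\Vert_X + \Vert \cS^\dagger_t(\bar u')\Vert_X$ to the product coupling of the (pushed-forward) excess parts yields exactly $\int_X \Vert\cS^\dagger_t(\bar u)\Vert_X\,|r^{y_{1:k}}-r^{y'_{1:k}}|\,\mup(d\bar u)$, the same intermediate inequality the paper obtains by duality, and likewise uniform in $t$. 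The two devices are equivalent here; the dual argument is a bit shorter, while your primal construction makes transparent \emph{why} the forward instability never appears (no mass is ever transported between distinct trajectories). One loose end you share with the paper rather than introduce: the lower bound on the normalization constants $\cZ^\dagger_k$ (Lemma \ref{lem:Zbound}) requires $\cL^\dagger_j \in L^2(\mup)$, so the Step~1 constants implicitly depend on $\Vert \cL^\dagger_j\Vert_{L^2(\mup)}$, i.e., on the observables $\cG$, and not only on $R,\rho,N$ as stated.
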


The proof of Theorem \ref{thm:filtering-wp} is provided in Section \ref{pf:filtering-wp} on page \pageref{pf:filtering-wp}.

\begin{example} [Filtering well-posedness for 2D Navier-Stokes]
As an immediate consequence of Theorem \ref{thm:filtering-wp}, we conclude that if $\cS^\dagger_t: L^2_x \to L^2_x$ is the solution operator of the incompressible Navier-Stokes equations \eqref{eq:NS} in two-dimensions, then the corresponding filtering problem is well-posed for \emph{any} viscosity $\nu > 0$, and the mapping $\bm{y} \mapsto \nu^{\bm{y}}_t$ from measurements to the solution is locally Lipschitz stable, with a constant that is \emph{independent} of the viscosity $\nu$. In contrast, we emphasize that the Lipschitz constant for the corresponding forward problem depends \emph{exponentially} on $1/\nu$ (cp. Theorem \ref{thm:NS2d}).
\end{example}

\begin{example} [Filtering well-posedness for 3D Navier-Stokes]
Similarly, for the three-dimensional Navier-Stokes equations we obtain a short-time well-posedness result if the prior $\mup$ is supported on sufficiently smooth initial data: e.g. if $\mup(\set{\bar{u}\in H^s_x}{\Vert \bar{u}\Vert_{H^s_x} \le M}) = 1$ for some $s>3/2$, and if the time-interval is sufficiently short $T< T^\ast(s,M)$, then the corresponding filtering problem $t\mapsto \nu^{\bm{y}}_t$ is well-posed on $t\in [0,T]$. Here, $H^s_x = H^s(\T^3;\R^3)$ denotes the well-known Sobolev space consisting of vector fields with square-integrable derivatives of order $s$.
\end{example}

\begin{example} [Uniform well-posedness for numerical discretizations in 2D and 3D]
Finally, we note that Theorem \ref{thm:filtering-wp} (with $\cS^\dagger_t$ replaced by $\cS^\Delta_t$) also implies the well-posedness of the filtering problem for numerical approximations, such as the spectral method introduced in Section \ref{sec:sv}, for any fixed $\nu, \Delta > 0$ and in \emph{both two and three dimensions}. Furthermore, the stability constant $C>0$ in \eqref{eq:filtering-stability} can be chosen \emph{uniformly}, for all values of $\nu, \Delta > 0$.
\end{example}

\subsubsection{Consistency}
Next, we discuss the consistency of approximate filtering based on a discretized solution operator $\cS^\Delta_t$, and the limiting filtering problem with solution operator $\cS^\dagger_t$. More precisely, we show that if $\cS^\Delta_t(\bar{u})\to \cS^\dagger_t(\bar{u})$ converges in a suitable sense, then $\nu^{\Delta,\bm{y}}_t \to \nu^{\bm{y}}_t$ in $L^1_t(\P)$ also converges:
\begin{theorem}[Filtering consistency] \label{thm:filtering-consistency}
Let $\mup \in \P_1(X)$ be a prior with finite second moments, $\Vert \bar{u} \Vert_{L^2(\mup)} < \infty$. Assume that $\cS^\Delta_t, \cS^\dagger_t: X\to X$ satisfy \ref{ass:sv1}--\ref{ass:sv3} and \ref{ass:sol1}--\ref{ass:sol2}, respectively. Then there exists a constant $C = C(R,T,N,\rho,\Vert \bar{u}\Vert_{L^2(\mup)}, \cG)> 0$, independent of $\Delta$, such that
\begin{align}
\label{eq:consistency}
\int_0^T
W_1
\left(
\nu^{\Delta,\bm{y}}_t
,
\nu^{\bm{y}}_t
\right)
\, dt
\le 
C 
\int_0^T 
\Vert \cS^\Delta_t(\bar{u}) - \cS^\dagger_t(\bar{u}) \Vert_{L^2(\mup)}
\, dt.
\end{align}
In particular, if $\cS^\Delta_t(\bar{u}) \to \cS^\dagger_t(\bar{u})$ in $L^1([0,T];L^2(\mup))$ at a certain convergence rate, then $\nu^{\Delta,\bm{y}}_t \to \nu_t^{\bm{y}}$ in $L^1_t(\P)$ converges at the same rate.
\end{theorem}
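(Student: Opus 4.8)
\section*{Proof proposal for Theorem \ref{thm:filtering-consistency}}

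The plan is to work directly from the explicit representation \eqref{eq:dasol2} of the filtering distributions, together with its evident discrete analogue. On each sub-interval $[t_{j-1},t_j)$ we have $\nu^{\bm{y}}_t = \cS^\dagger_{t,\#}\mu^{y_{1:(j-1)}}$ and $\nu^{\Delta,\bm{y}}_t = \cS^\Delta_{t,\#}\mu^{\Delta,y_{1:(j-1)}}$, where by \eqref{eq:bipsol} both BIP posteriors are absolutely continuous with respect to the \emph{same} prior $\mup$, with densities $f^\dagger = (\cZ^\dagger)^{-1}\exp(-\sum_j\Phi^{\dagger,y_j}_j)$ and $f^\Delta = (\cZ^\Delta)^{-1}\exp(-\sum_j\Phi^{\Delta,y_j}_j)$ (the latter defined with $\cL^\Delta_j(\bar{u}) := \cG_j(\cS^\Delta(\bar{u}))$ in place of $\cL^\dagger_j$). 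It therefore suffices to bound $W_1\bigl(\cS^\Delta_{t,\#}\mu^{\Delta,y_{1:k}},\cS^\dagger_{t,\#}\mu^{y_{1:k}}\bigr)$ for each $k$ and $t$, and integrate in $t$. The naive route — the triangle inequality combined with Proposition \ref{prop:LipW1} — is exactly what must be avoided, since it introduces the (possibly infinite) Lipschitz constant $L_\cS$; instead I will exploit the shared reference measure $\mup$ via Kantorovich--Rubinstein duality.

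By duality it suffices to estimate, uniformly over $1$-Lipschitz $\psi$, the quantity $\int_X[\psi(\cS^\dagger_t(\bar{u}))f^\dagger-\psi(\cS^\Delta_t(\bar{u}))f^\Delta]\,d\mup$, which I split as
\[
\int_X\bigl[\psi(\cS^\dagger_t(\bar{u}))-\psi(\cS^\Delta_t(\bar{u}))\bigr]f^\dagger\,d\mup
+\int_X\psi(\cS^\Delta_t(\bar{u}))\bigl[f^\dagger-f^\Delta\bigr]\,d\mup.
\]
The first term is at most $\int_X\Vert\cS^\dagger_t(\bar{u})-\cS^\Delta_t(\bar{u})\Vert_X\,f^\dagger\,d\mup$. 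Since each likelihood factor $\exp(-\Phi^{\dagger,y_j}_j)=\rho(\cdots)$ is bounded above by \ref{ass:noise2}, while the normalization $\cZ^\dagger$ admits a strictly positive lower bound uniform over $|\bm{y}|_\Gamma\le R$ (from the tail bound \ref{ass:noise3} combined with the growth bound \ref{ass:sol2} and the finite moments of $\mup$), the density $f^\dagger$ is bounded by a data-dependent constant $M$; thus this term is at most $M\Vert\cS^\dagger_t-\cS^\Delta_t\Vert_{L^1(\mup)}$, with \emph{no} Lipschitz constant of the forward operator involved.

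For the density-difference term the key observation is that $\int_X(f^\dagger-f^\Delta)\,d\mup=0$, so $\psi(\cS^\Delta_t(\bar{u}))$ may be replaced by $\psi(\cS^\Delta_t(\bar{u}))-\psi(0)$, and $|\psi(\cS^\Delta_t(\bar{u}))-\psi(0)|\le\Vert\cS^\Delta_t(\bar{u})\Vert_X\le B\Vert\bar{u}\Vert_X$ by \ref{ass:sv1}. The term is then at most $\int_X\Vert\bar{u}\Vert_X|f^\dagger-f^\Delta|\,d\mup$. To control $|f^\dagger-f^\Delta|$ I estimate numerator and normalization separately: telescoping the products of likelihood factors (bounded by \ref{ass:noise2}) and using the Lipschitz property \ref{ass:noise1} of $\rho$ together with the Lipschitz bound \eqref{eq:Lj} on $\cG_j$ gives
\[
\Bigl|e^{-\sum_j\Phi^{\dagger,y_j}_j(\bar{u})}-e^{-\sum_j\Phi^{\Delta,y_j}_j(\bar{u})}\Bigr|
\le C\sum_{j=1}^{k}\int_{t_{j-1}}^{t_j}\Vert\cS^\dagger_s(\bar{u})-\cS^\Delta_s(\bar{u})\Vert_X\,ds,
\]
and integrating this against $\mup$ yields the same type of control on $|\cZ^\dagger-\cZ^\Delta|$ (again with $\cZ^\dagger,\cZ^\Delta$ bounded below as above).

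Assembling these pieces, $|f^\dagger-f^\Delta|$ is controlled by an $\bar{u}$-dependent time-integral of $\Vert\cS^\dagger_s-\cS^\Delta_s\Vert_X$ plus an $\bar{u}$-independent normalization piece. Pairing the former with the factor $\Vert\bar{u}\Vert_X$, applying Fubini and then Cauchy--Schwarz in $\mup$ produces $\int_0^T\Vert\bar{u}\Vert_{L^2(\mup)}\,\Vert\cS^\dagger_s-\cS^\Delta_s\Vert_{L^2(\mup)}\,ds$; this is precisely the step that forces the second-moment hypothesis on $\mup$ and accounts for the $L^2(\mup)$-norm on the right-hand side of \eqref{eq:consistency}. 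Integrating the resulting pointwise-in-$t$ bounds over $[0,T]$ and summing over the $N$ sub-intervals gives \eqref{eq:consistency} with $C=C(R,T,N,\rho,\Vert\bar{u}\Vert_{L^2(\mup)},\cG)$. I expect the main technical obstacle to be the careful bookkeeping for the lower bound on $\cZ^\dagger,\cZ^\Delta$ uniformly over $|\bm{y}|_\Gamma\le R$ \emph{and} uniformly in $\Delta$ (needed so that both $M$ and the factors $1/(\cZ^\dagger\cZ^\Delta)$ are uniformly controlled), which requires combining \ref{ass:noise3} with the growth bounds \ref{ass:sol2} and \ref{ass:sv1} and the finiteness of the moments of $\mup$; the remaining estimates are careful but routine triangle-inequality and Cauchy--Schwarz computations.
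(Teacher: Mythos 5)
Your proposal is correct and follows essentially the same route as the paper: the same push-forward representation \eqref{eq:dasol2}, the same intermediate measure $\cS^\Delta_{t,\#}\mu^{y_{1:j}}$ splitting the error into a ``map-difference'' term (killed by the uniform density bound of Lemma \ref{lem:Zbound}) and a ``density-difference'' term (handled via the centering $\Phi(0)=0$, Cauchy--Schwarz, and the Lipschitz/tail properties of $\rho$ together with \eqref{eq:Lj}). The only cosmetic difference is that you re-derive inline the density-perturbation estimate that the paper packages as Lemma \ref{lem:LinfLip2}.
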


The proof of Theorem \ref{thm:filtering-consistency} is provided in Section \ref{pf:filtering-consistency} below, on page \pageref{pf:filtering-consistency}.

\begin{example}
Based the analysis of \cite{bardos2015stability}, the solutions computed by the numerical scheme in Section \ref{sec:sv} are expected to converge  spectrally for the two-dimensional Navier-Stokes equations: if $\mup(\set{\bar{u}\in H^s}{\Vert \bar{u}\Vert_{H^s}\le M})=1$ for some $M>0$, then $\Vert \cS^\Delta_t(\bar{u}) - \cS^\dagger_t(\bar{u})\Vert_{L^2_x} \le C \Delta^{s}$. In particular, by Theorem \ref{thm:filtering-consistency}, this implies a similar convergence rate also for the Bayesian filtering problem, i.e.
\begin{align*}
\int_0^T W_1(\nu^{\Delta,\bm{y}}_t, \nu^{\bm{y}}_t) \, dt
\le 
C \Delta^s.
\end{align*}
\end{example}

\begin{remark}[Surrogate models]
\label{rmk:surrogate}
Recently, surrogate models based on novel neural network architectures have been proposed to speed up many-query problems such as Bayesian data assimilation (see e.g. \cite{fourierop2020} for first results in this direction). These neural network-based methods provide an approximation $\cS^\Delta_t \approx \cS^\dagger_t$ of the underlying solution operator based on the minimization of an empirical loss function, which is chosen as a Monte-Carlo approximation of 
\[
\widehat{\cL}_{\mathrm{loss}}
=
\int_0^T 
\Vert \cS^\Delta_t(\bar{u}) - \cS^\dagger_t(\bar{u}) \Vert_{L^2(\mup)}^2
\, dt.
\]
Theorem \ref{thm:filtering-consistency} provides a first step towards a more detailed estimate on the approximation error of the underlying filtering problem $\nu^{\Delta,\bm{y}}_t\approx \nu^{\bm{y}}_t$, in terms of the loss $\widehat{\cL}_{\mathrm{loss}}$. Indeed, the upper bound \eqref{eq:consistency} implies the estimate 
$d_T(\nu^{\Delta,\bm{y}}_t,\nu^{\bm{y}}_t) \le C \sqrt{T\widehat{\cL}_{\mathrm{loss}}}$, on the time-integrated Wasserstein-distance between $\nu^{\Delta,\bm{y}}_t$ and $\nu^{\bm{y}}_t$!
\end{remark}

\subsubsection{Compactness and uniform stability}

We finally turn our attention to the approximate filtering problem for the particular case of the Navier-Stokes equations in 3D, in the \emph{absence} of a priori well-posedness for the forward problem. In contrast, the approximate solutions obtained from numerical discretizations, such as the spectral scheme outlined in section \ref{sec:sv} are well-defined for any given grid size $\Delta > 0$; Hence we focus our attention on the behaviour of these numerical discretizations $\cS^\Delta_t: X \to X$, with $X = L^2_x$ the space of square-integrable vector fields $\bar{u}: \T^3\to \R^3$ on the three-dimensional, $2\pi$-periodic torus $\T^3$, satisfying $\div(\bar{u}) = 0$. As pointed out in section \ref{sec:NStheory}, in this case, the uniqueness and stability of the forward problem for the Navier-Stokes equations is not known for general input data $\bar{u}\in L^2_x$. Nevertheless, the corresponding numerical approximations computed by the scheme \eqref{eq:spectralvisc} are well-defined for any $\Delta > 0$. Such approximations allow us to compute approximate filtering distributions $\nu^{\Delta,\bm{y}}_t$ for a given discretization parameter $\Delta > 0$. Even though stability of the corresponding forward problem is not known, and we could have $\Lip(\cS^\Delta_t) \to \infty$ as $\Delta \to 0$, the results of the present work nevertheless allow us to prove \emph{uniform stability} and \emph{compactness} for the corresponding approximate filtering distributions. 

Before stating the next theorem, we recall that a probability measure $\mup \in \cP_1(L^2_x)$ is said to have bounded support, if there exists $M>0$, such that 
\begin{align} \label{eq:bdsupp}
\mup(\set{\bar{u} \in L^2_x}{\Vert \bar{u} \Vert_{L^2_x} \le M})=1.
\end{align}
We can now state the following compactness result:

\begin{theorem}[Filtering compactness for Navier-Stokes] \label{thm:filtering}
Assume that the prior ${\mup}\in \P_1(L^2_x)$ has bounded support \eqref{eq:bdsupp} for some $M>0$, where $L^2_x := L^2(\T^d;\R^d)$ denotes the space of square integrable, periodic vector fields. Assume that $\mup$ is concentrated on divergence-free vector fields. Let $0=t_0 < t_1 < \dots < t_N=T$ be a strictly increasing sequence for fixed $N\in \mathbb{N}$. Let $\bm{y}=(y_1,y_2, \dots, y_N) \in \R^{d\times N}$ be a sequence of measurements. Let $\cS^\Delta_t: L^2_x\to L^2_x$ for $\Delta>0$ be approximate solution operators satisfying \ref{ass:sv1}--\ref{ass:sv3} of Proposition \ref{prop:sv}, and let $\nu^{\Delta,\bm{y}}_t$ be the solution of the associated filtering problem. Then the sequence $\nu^{\Delta,\bm{y}}_t$ is pre-compact in $C_\loc(\R^{d\times N};L^1_t(\P))$, as $\Delta \to 0$. In fact, for any $R > 0$, there exists a constant $C = C(R,\rho,N,M)>0$, such that 
\begin{align} \label{eq:W1stab}
\sup_{|\bm{y}|_\Gamma,|\bm{y}'|_\Gamma \le R}
W_1(\nu^{\Delta,\bm{y}}_t,\nu^{\Delta,\bm{y}'}_t) 
\le 
C |\bm{y} - \bm{y}'|_\Gamma,
\quad
\forall \, t\in [0,T],
\end{align}
and there exists a subsequence $\Delta_k \to 0$, and $\nu^{\ast,\bm{y}}_t$ such that for any $R>0$, 
\[
\sup_{|\bm{y}|_\Gamma \le R}
\int_0^T
W_1\left(
\nu^{\Delta_k,\bm{y}}_t,
\nu^{\ast,\bm{y}}_t
\right)
\, dt
\to 0,
\]
converges locally uniformly in $\bm{y}$. Any such limit satisfies the stability estimate \eqref{eq:W1stab} in $\bm{y}$.
\end{theorem}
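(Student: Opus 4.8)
The plan is to derive the uniform stability estimate \eqref{eq:W1stab} first, and then obtain compactness through an Arzelà--Ascoli argument in the complete metric space $C_\loc(\R^{d\times N};L^1_t(\P))$; for the latter it suffices to verify (i) equicontinuity of the family $\{\bm y\mapsto \nu^{\Delta,\bm y}_\cdot\}_\Delta$, uniformly in $\Delta$, and (ii) pointwise precompactness of $\{\nu^{\Delta,\bm y}_\cdot\}_\Delta$ in $L^1_t(\P)$ for each fixed $\bm y$. For (i) and for \eqref{eq:W1stab} I would simply apply Theorem~\ref{thm:filtering-wp} with $\cS^\dagger_t$ replaced by $\cS^\Delta_t$: by \ref{ass:sv1} the operator norm bound holds with $B_\cS=1$, and the bounded support \eqref{eq:bdsupp} gives $\Vert\bar u\Vert_{L^1(\mup)}\le M$, so the constant $C=C(R,\rho,N,M)$ produced by Theorem~\ref{thm:filtering-wp} is \emph{independent of $\Delta$}. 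This yields the pointwise-in-$t$ bound \eqref{eq:W1stab}, and after integration in $t$ the equicontinuity required for (i).

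The core is (ii), which I would prove by lifting to trajectory measures. On $[t_{j-1},t_j)$ one has $\nu^{\Delta,\bm y}_t=\cS^\Delta_{t,\#}\mu^{\Delta,y_{1:(j-1)}}$ (\eqref{eq:dasol2} applied to $\cS^\Delta$). Introduce the trajectory map $\Psi^\Delta:L^2_x\to L^2([0,T];L^2_x)$, $\bar u\mapsto(t\mapsto\cS^\Delta_t(\bar u))$, and the push-forward $\Theta^\Delta:=\Psi^\Delta_\#\mup$. The decisive point is that \ref{ass:sv1}--\ref{ass:sv3} force every trajectory $\Psi^\Delta(\bar u)$, for $\bar u$ in the support of $\mup$, to lie in the single set
\[
K=\set{u}{\Vert u\Vert_{L^\infty_t L^2_x}\le M,\ \textstyle\int_0^T\Vert\nabla u\Vert_{L^2_x}^2\,dt\le \nu^{-1}M^2,\ \Vert u\Vert_{\Lip([0,T];H^{-L}_x)}\le C(M)},
\]
the constants being uniform in $\bar u$ (since $\Vert\bar u\Vert_{L^2_x}\le M$) and in $\Delta$. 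By the Aubin--Lions--Simon lemma, using the compact embedding $H^1_x\embeds\embeds L^2_x\embeds H^{-L}_x$, the set $K$ is compact in $L^2([0,T];L^2_x)$. Hence all $\Theta^\Delta$ are supported in the fixed compact $K$, the family $\{\Theta^\Delta\}_\Delta$ is tight, and by Prokhorov's theorem there is a subsequence with $\Theta^{\Delta_k}\weaklyto\Theta^\ast$; crucially this subsequence does not depend on $\bm y$.

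It remains to transfer this into $L^1_t(\P)$-convergence of the filtering distributions. The observables $\cG_i$ are Lipschitz, hence continuous, on $K$ (by \eqref{eq:Lj} and Cauchy--Schwarz), and $\rho$ is bounded, continuous and strictly positive. Setting $w_{\bm y}(u)=\prod_{i=1}^{j-1}\rho(\cG_i(u)-y_i)$ and $\tilde\Theta^\Delta_j:=(\cZ^\Delta_{j-1})^{-1}w_{\bm y}\,\Theta^\Delta$ on $K$, the reweighted measures satisfy $\tilde\Theta^{\Delta_k}_j\weaklyto\tilde\Theta^\ast_j$, since $w_{\bm y}$ is bounded continuous and the normalizations converge to $\cZ^\ast_{j-1}\ge c^{\,j-1}>0$ (using \ref{ass:noise2}--\ref{ass:noise3} and the uniform bound $|\cG_i(u)|\le C(M)$ on $K$). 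As $\nu^{\Delta,\bm y}_t=(e_t)_\#\tilde\Theta^\Delta_j$ for the evaluation $e_t$, I define $\nu^{\ast,\bm y}_t:=(e_t)_\#\tilde\Theta^\ast_j$ and bound the distance by Skorokhod coupling: on a common probability space there exist $U^{\Delta_k}\sim\tilde\Theta^{\Delta_k}_j$, $U^\ast\sim\tilde\Theta^\ast_j$ with $U^{\Delta_k}\to U^\ast$ almost surely in $L^2([0,T];L^2_x)$, so $(U^{\Delta_k}(t),U^\ast(t))$ couples $(\nu^{\Delta_k,\bm y}_t,\nu^{\ast,\bm y}_t)$ and, by Cauchy--Schwarz in time,
\[
\int_{t_{j-1}}^{t_j} W_1(\nu^{\Delta_k,\bm y}_t,\nu^{\ast,\bm y}_t)\,dt
\le \E\!\int_{t_{j-1}}^{t_j}\Vert U^{\Delta_k}(t)-U^\ast(t)\Vert_{L^2_x}\,dt
\le \sqrt{T}\,\E\Vert U^{\Delta_k}-U^\ast\Vert_{L^2_t L^2_x}.
\]
Since $\Vert U^{\Delta_k}-U^\ast\Vert_{L^2_t L^2_x}\to0$ a.s. and is bounded on $K$, dominated convergence and summation over $j$ give $\int_0^T W_1(\nu^{\Delta_k,\bm y}_t,\nu^{\ast,\bm y}_t)\,dt\to0$, proving (ii). Because $\Delta_k$ is independent of $\bm y$ and $\nu^{\ast,\bm y}$ inherits \eqref{eq:W1stab} from the uniform bound on $\nu^{\Delta_k,\bm y}$, equicontinuity upgrades this pointwise convergence to local uniform convergence in $\bm y$, as required.

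The hard part, in my view, is precisely this transfer step. The Aubin--Lions--Simon compactness is naturally stated in the strong $L^2([0,T];L^2_x)$ topology, in which the pointwise evaluation $e_t$ is \emph{not} continuous, whereas the time-regularity \ref{ass:sv3} only yields continuity into the weaker space $H^{-L}_x$. Reconciling these --- justifying that $\nu^{\ast,\bm y}_t=(e_t)_\#\tilde\Theta^\ast_j$ is well defined (via the canonical $H^{-L}_x$-continuous representative of trajectories in $K$) and that the Skorokhod coupling delivers the \emph{time-integrated} $W_1$-bound above without per-$t$ continuity of $e_t$ --- is the main technical obstacle, and is exactly what makes the integrated metric $d_T$, rather than a supremum-in-time metric, the natural setting for this compactness statement.
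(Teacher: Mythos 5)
Your proposal is correct in its overall logic, but the core compactness step is carried out by a genuinely different mechanism than the paper's. The shared skeleton: both you and the paper obtain the uniform stability bound \eqref{eq:W1stab} (hence equicontinuity in $\bm{y}$, uniformly in $\Delta$) by applying Theorem \ref{thm:filtering-wp} to $\cS^\Delta_t$ with $B_\cS = 1$ and $\Vert \bar{u}\Vert_{L^1(\mup)}\le M$, and both finish with an Arzel\`a--Ascoli argument (the paper invokes Theorem \ref{thm:arzelaascoli} directly; you re-derive the upgrade from pointwise to locally uniform convergence by hand, with the added benefit that your subsequence is extracted at the level of the $\bm{y}$-independent trajectory measures $\Theta^\Delta = \Psi^\Delta_{\#}\mup$, so no diagonal argument over $\bm{y}$ is needed). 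The divergence is in the pointwise-in-$\bm{y}$ precompactness of $\{\nu^{\Delta,\bm{y}}_t\}_\Delta$ in $L^1_t(\P)$. The paper proves this (Lemma \ref{lem:filter-compactness}) by bounding the time-integrated structure functions $\S_2^T(\nu^{\Delta,\bm{y}}_t;r)$: the posterior density $d\mu^{\Delta,y_{1:k}}/d\mup$ is bounded uniformly in $\Delta$ (Lemma \ref{lem:Zbound}), so the structure-function bound $\S_2^T(\cS^\Delta_{t,\#}\mup;r)\le Cr/\sqrt{\nu}$ coming from the coercivity \ref{ass:sv2} (Lemma \ref{lem:coerc}) transfers to the filtering distributions, and compactness then follows from the criterion of \cite[Theorem 2.2]{LMP1}, i.e.\ Proposition \ref{prop:cpct}. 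You instead lift to trajectory space, use the same three bounds \ref{ass:sv1}--\ref{ass:sv3} to place all trajectories in a single Aubin--Lions--Simon compact $K\subset L^2([0,T];L^2_x)$, get weak compactness of $\{\Theta^\Delta\}$ by Prokhorov, and transfer to the posteriors by reweighting with the bounded, continuous, bounded-below likelihood $w_{\bm{y}}$ --- this reweighting step being the exact analogue of the paper's Lemma \ref{lem:Zbound}, with the viscous dissipation playing the same role in both proofs (it supplies the spatial compactness, via the $L^2_tH^1_x$ bound in your argument, via the structure-function decay in the paper's). What each route buys: yours is self-contained (no appeal to the external compactness theorem of \cite{LMP1}) and makes the $\bm{y}$-independence of the subsequence transparent; its cost is exactly the obstacle you flag, namely that the evaluation $e_t$ is not continuous on $L^2([0,T];L^2_x)$, so defining $\nu^{\ast,\bm{y}}_t = (e_t)_{\#}\tilde{\Theta}^\ast_j$ and running the Skorokhod/Fubini coupling requires working with the canonical $H^{-L}_x$-Lipschitz representatives on the closure of $K$ (plus the observation that on the $L^2_x$-ball of radius $M$ the $H^{-L}_x$-topology coincides with the weak $L^2_x$-topology, so $e_t$ is Borel into $L^2_x$). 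These details are resolvable along the lines you sketch, so I see no gap; the paper's structure-function route packages this same difficulty inside the cited result of \cite{LMP1}, which is why its write-up is considerably shorter.
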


For the details of the proof of Theorem \ref{thm:filtering}, we refer to Section \ref{pf:filtering}, page \pageref{pf:filtering}.

\begin{remark}[Stability of expectations] \label{rmk:Estability}
The stability estimate \eqref{eq:W1stab} in Theorem \ref{thm:filtering} is a consequence of the continuity properties of the noise distribution $\rho$, and is \emph{independent of any continuity properties of the observable $\L^\Delta(u)$}. One implication of \eqref{eq:W1stab} is that for any Lipschitz continuous $\Phi \in \Lip(X)$ and $t\in [0,T]$, the mapping
\[
\bm{y} \mapsto \E_t^{\Delta,\bm{y}}\left[\Phi\right]
:=
\frac{1}{Z^\Delta(\bm{y})} 
\int_X 
\Phi(u) \, d\nu_t^{\Delta,\bm{y}}(u),
\]
is locally Lipschitz continuous under the assumptions of Theorem \ref{thm:filtering}, i.e. for any $R>0$, there exists $C(R,\rho,N,M)>0$ , such that
\begin{align} \label{eq:Lipschitz-E}
\left|
\E^{\Delta,\bm{y}}\left[\Phi\right]
-
\E^{\Delta,\bm{y}'}\left[\Phi\right]
\right|
\le 
C\Vert \Phi \Vert_{\Lip} |\bm{y}-\bm{y}'|_\Gamma.
\end{align}
\end{remark}

\begin{remark}[Real-analyticity of expectations]
Under the assumptions of Theorem \ref{thm:filtering} and assuming additionally that the noise $\mu_{\mathrm{noise}} = \rho(y) \, dy$ is \emph{Gaussian noise}, then the Lipschitz continuity \eqref{eq:Lipschitz-E} of Remark \ref{rmk:Estability} can be considerably strengthened to show that, for any $\phi \in L^\infty(\mup)$ and $t\in [0,T]$, the mapping
\[
\R^{d\times N} \to \R,
\quad 
\bm{y} \mapsto \E_t^{\Delta,\bm{y}}\left[\phi\right],
\]
is \emph{real analytic}: indeed, for the corresponding Bayesian inverse problem (estimating the initial data) it follows from \cite[Lemma 4.5]{Herrmann_2020} that the mapping
\begin{align} \label{eq:hermann}
\R^{d \times N} \to \R, 
\quad
\bm{y} = (y_1,\dots, y_N) \mapsto 
\int_{X} \psi(\bar{u}) \, d\mu^{\Delta,y_{1:k}}(\bar{u}),
\end{align}
is real-analytic for any $\psi\in L^1(\mup)$ and $k \in \{1,\dots, N\}$. By \eqref{eq:dasol2}, we have 
\[
\nu^{\Delta,\bm{y}}_t= 
\sum_{k=0}^N 1_{[t_{k},t_{k+1})}(t) \; \cS^{\Delta}_{t,\#} \mu^{\Delta,y_{1:k}},
\]
where we formally set $t_{N+1}=\infty$, 
and hence for fixed $t\in [0,T]$, there exists $k$ such that $\nu^{\Delta,\bm{y}}_t = \cS^\Delta_{t,\#}\mu^{\Delta,y_{1:k}}$. Thus, for any $\phi \in L^\infty(\mup)$, we see that
\begin{align*}
\E_t^{\Delta,\bm{y}}\left[\phi\right]
&=
\int_{X} \phi(u) \, \nu^{\Delta,\bm{y}}_t(du)
=
\int_{X} \phi(u) \, \left[\cS^\Delta_{t,\#}\mu^{\Delta,y_{1:k}}\right](d u)
\\
&=
\int_{X} \phi\left( \cS^\Delta_t(\bar{u})\right) 
\, \mu^{\Delta,y_{1:k}}(d\bar{u}),
\end{align*}
is of the form \eqref{eq:hermann} with $\psi(\bar{u}) := \phi\left(\cS^\Delta_t(\bar{u})\right) \in L^\infty(\mup) \subset L^1(\mup)$. Hence $\bm{y}\mapsto \E^{\Delta,\bm{y}}_t[\phi]$ is real-analytic by the results of \cite{Herrmann_2020}.
In particular, this conclusion is independent of any regularity properties of $\bar{u} \mapsto \cS^\Delta_t(\bar{u})$. 
\end{remark}

\begin{remark} \label{rem:candidate}
Theorem \ref{thm:filtering} shows that even though the forward problem for the three-dimensional Navier-Stokes equations may be \emph{ill-posed}, we can nevertheless assign a set of candidate solutions for the Bayesian DA problem to a family of approximate posteriors $\nu^{\Delta,\bm{y}}_t$ at resolution $\Delta>0$. This set of candidate solutions in the limit $\Delta \to 0$ is given by
\[
\mathcal{M} = 
\set{
\nu^{\ast,\bm{y}}_t \in L^1_t(\P)
}{
\exists \Delta_k\to 0, \text{ s.t. } \nu^{\ast,\bm{y}}_t = \lim_{k\to \infty} \nu^{\Delta_k,\bm{y}}_t
},
\]
or equivalently, we can write
\[
\mathcal{M} = \bigcap_{\overline{\Delta} > 0} \cl\left({\set{(t,\bm{y})\mapsto \nu^{\Delta,\bm{y}}_t}{\Delta \le \overline{\Delta}}}\right),
\]
where $\cl$ denotes the closure in ${C_\loc(\R^d;L^1_t(\P))}$. We note that the set $\mathcal{M}$ is non-empty: This follows from the fact that any finite intersections are clearly non-empty and that each of the sets is a compact subset of ${C_\loc(\R^d;L^1_t(\P))}$. It then follows from the finite intersection property of compact sets that also their intersection $\mathcal{M} \ne \emptyset$, i.e. there always exists at least one candidate solution.
\end{remark}

The last remark can be interpreted as an \emph{existence result} for solutions of the Bayesian DA problem. This is an analogue of corresponding existence results for the forward problem of the Navier-Stokes equations \cite{Leray1934}. However, in contrast to the existence result of the forward problem, which implies the existence of suitable limits $\cS^{\Delta_k}_t(\bar{u}) \to \cS^\dagger_t(\bar{u})$ for fixed $\bar{u}$ and which may exhibit \emph{no stability in $\bar{u}$}, limits obtained for the filtering problem do not only exist, but are also \emph{uniformly stable} with respect to $\bm{y}$, giving rise to limits $\nu^{\ast,\bm{y}}_t$ with continuous dependence on $\bm{y}$. This remarkable stability of the data assimilation problem is in stark contrast with the corresponding forward problem, even though both problems involve the prediction of a future state.

We also note that, following the connection between Bayesian inverse problems and Bayesian data assimilation pointed out in Remark \ref{rmk:bip-as-da}, we can readily obtain a corresponding existence result for Bayesian inverse problems, which we state in passing:

\begin{theorem}[Compactness and stability for BIP]
\label{thm:bipcompact}
Let ${\mup}\in \P_1(X)$. Let $\cL^\Delta: X \to \R^d$ be a sequence of approximate numerical functionals, and let $\mu^{\Delta,y}$ be the solution of the associated BIP. If there exists $M>0$, such that $\sup_{\Delta} \Vert \cL^\Delta(\bar{u}) \Vert_{L^2(\mup)} \le M$, then the sequence $\mu^{\Delta,y}$ is pre-compact in $C_\loc(\R^{d};\cP_1(X))$, as $\Delta \to 0$: In fact, for any $R>0$, there exists $C = C(R,M,\rho)>0$, such that 
\[
W_1(\mu^{\Delta,y}, \mu^{\Delta,y'}) \le C |y-y'|_\Gamma, \quad 
\forall |y|_\Gamma, |y'|_\Gamma \le R.
\]
Furthermore, there exists a subsequence $\Delta_k \to 0$, such that for any $R>0$, 
\[
\sup_{|y|_\Gamma \le R}
W_1\left(
\mu^{\Delta_k,y},
\mu^{\ast,y}
\right)
\to 0,
\]
converges locally uniformly in $y$. Any such limit $\mu^{\ast,y}$ is locally Lipschitz continuous with respect to $y$, and can be represented in the form $d\mu^{\ast,y} = \frac{1}{Z^\ast(y)} \, \exp(-\Phi^\ast(\bar{u})) \, d\mup$ for a suitable functional $\Phi^\ast: X \to \R$.
\end{theorem}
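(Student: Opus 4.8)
The plan is to establish everything directly from the explicit posterior formula \eqref{eq:bipsol}, specialized to the inverse problem, rather than invoking Theorem~\ref{thm:filtering} (whose hypotheses are tailored to the Navier--Stokes setting with bounded support). Recall that $\mu^{\Delta,y}$ has $\mup$-density $g^\Delta_y(\bar u) = \cZ^\Delta(y)^{-1}\rho(\cL^\Delta(\bar u)-y)$, with $\cZ^\Delta(y) = \int_X \rho(\cL^\Delta(\bar u)-y)\,d\mup$. The first step is to record two-sided bounds on this density, uniform in $\Delta$ and in $|y|_\Gamma\le R$. The upper bounds $\rho\le C$ and $\cZ^\Delta(y)\le C$ come from \ref{ass:noise2}, giving $g^\Delta_y\le C/\cZ^\Delta(y)$. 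For a uniform lower bound on the normalization I would combine the tail condition \ref{ass:noise3} with Jensen's inequality and the moment bound: $\cZ^\Delta(y)\ge C^{-1}\int_X \exp(-\tfrac12|\cL^\Delta(\bar u)-y|_\Gamma^2)\,d\mup \ge C^{-1}\exp(-\tfrac12\int_X|\cL^\Delta(\bar u)-y|_\Gamma^2\,d\mup)\ge \cZ_{\min}$, where $\cZ_{\min}=\cZ_{\min}(R,M,\rho)>0$ since $\int_X|\cL^\Delta(\bar u)|_\Gamma^2\,d\mup\le cM^2$ (all norms on $\R^d$ being equivalent). Thus $g^\Delta_y$ is bounded above by $C/\cZ_{\min}$.

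The stability estimate is the analogue of \eqref{eq:W1stab} and rests on the mechanism of Remark~\ref{rmk:Estability}. I would use Kantorovich--Rubinstein duality in the form $W_1(\mu^{\Delta,y},\mu^{\Delta,y'})\le \int_X \Vert\bar u\Vert_X\,|g^\Delta_y-g^\Delta_{y'}|\,d\mup$, testing against $1$-Lipschitz $\phi$ with $\phi(0)=0$ (so $|\phi(\bar u)|\le\Vert\bar u\Vert_X$) and using $\int_X(g^\Delta_y-g^\Delta_{y'})\,d\mup=0$. Splitting $g^\Delta_y-g^\Delta_{y'}$ into the numerator and normalization contributions, the Lipschitz assumption \ref{ass:noise1} gives $|\rho(\cL^\Delta(\bar u)-y)-\rho(\cL^\Delta(\bar u)-y')|\le L_\rho|y-y'|_\Gamma$ and likewise $|\cZ^\Delta(y)-\cZ^\Delta(y')|\le L_\rho|y-y'|_\Gamma$; together with $\cZ^\Delta\ge\cZ_{\min}$ and $\rho\le C$ this yields $|g^\Delta_y-g^\Delta_{y'}|\le K|y-y'|_\Gamma$ with $K=K(R,M,\rho)$, hence $W_1(\mu^{\Delta,y},\mu^{\Delta,y'})\le K\Vert\bar u\Vert_{L^1(\mup)}|y-y'|_\Gamma$ (the stated constant thus also absorbs the finite first moment of $\mup$). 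The decisive point, exactly as in Remark~\ref{rmk:Estability}, is that this never uses any continuity of $\cL^\Delta$.

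For the pointwise (fixed $y$) pre-compactness of $\{\mu^{\Delta,y}\}_\Delta$ in $(\cP_1(X),W_1)$ I would verify the two standard criteria. Tightness: since $X$ is separable, $\mup$ is tight, and as each $\mu^{\Delta,y}$ is dominated by $(C/\cZ_{\min})\mup$, a single compact set serves the whole family. Uniform integrability of the first moment: $\int_{\{\Vert\bar u\Vert_X>L\}}\Vert\bar u\Vert_X\,d\mu^{\Delta,y}\le (C/\cZ_{\min})\int_{\{\Vert\bar u\Vert_X>L\}}\Vert\bar u\Vert_X\,d\mup\to 0$ as $L\to\infty$, uniformly in $\Delta$, because $\Vert\bar u\Vert_X\in L^1(\mup)$. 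These two facts characterize $W_1$-pre-compactness. Combining uniform equicontinuity (the stability bound) with pointwise pre-compactness, an Arzel\`a--Ascoli argument in $C_\loc(\R^d;\cP_1(X))$ produces a subsequence $\Delta_k\to 0$ along which $y\mapsto\mu^{\Delta_k,y}$ converges locally uniformly to some $\mu^{\ast,y}$, which inherits the Lipschitz estimate.

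I expect the main obstacle to be the final representation $d\mu^{\ast,y}=Z^\ast(y)^{-1}\exp(-\Phi^{\ast,y})\,d\mup$ with a \emph{finite} potential (the $\Phi^\ast$ of the statement carrying an implicit dependence on $y$). Passing to a weak-$\ast$ limit of the uniformly $L^\infty(\mup)$-bounded densities $g^{\Delta_k}_y$, and matching against the $W_1$-limit by testing with $C_b(X)\subset L^1(\mup)$, identifies $\mu^{\ast,y}=g^\ast_y\,\mup$ for some $g^\ast_y\in L^\infty(\mup)$. The delicate step is to show $g^\ast_y>0$ $\mup$-almost everywhere, which is precisely what is needed to set $\Phi^{\ast,y}:=-\log(Z^\ast(y)g^\ast_y)$ as a real-valued functional. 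I would derive this from a uniform lower bound: for any $A$ with $\mup(A)>0$, the tail condition \ref{ass:noise3} combined with a Chebyshev estimate $\mup(\{|\cL^\Delta(\bar u)|_\Gamma>\lambda\})\le M^2/\lambda^2$ gives, upon restricting to $A\cap\{|\cL^\Delta(\bar u)|_\Gamma\le\lambda\}$ with $\lambda$ chosen so that this set has measure $\ge\tfrac12\mup(A)$, the bound $\int_A g^\Delta_y\,d\mup\ge c(A,R,M,\rho)>0$ uniformly in $\Delta$. Weak-$\ast$ convergence then forces $\int_A g^\ast_y\,d\mup>0$ for every such $A$, whence $g^\ast_y>0$ almost everywhere and the claimed representation follows.
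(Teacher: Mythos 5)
Your proof is correct, but it takes a route the paper never spells out: Theorem \ref{thm:bipcompact} is stated only ``in passing'', as a consequence of the identification in Remark \ref{rmk:bip-as-da} (BIP as Bayesian DA with $\cS^\dagger_t = \mathrm{id}$) together with the DA results, and the paper contains no dedicated proof. Where your argument overlaps with the paper's machinery, it coincides with it: your two-sided density bounds and the numerator/normalization splitting of $g^\Delta_y-g^\Delta_{y'}$ are exactly Lemmas \ref{lem:Zbound} and \ref{lem:LinfLip}, your Kantorovich-duality step with $\Phi(0)=0$ reproduces the proof of Lemma \ref{lem:filterstab}, and the Arzel\`a--Ascoli conclusion mirrors the proof of Theorem \ref{thm:filtering}. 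The genuine divergence is the pointwise compactness mechanism: the paper's only compactness tool for posteriors is the structure-function criterion (Assumption \ref{ass:pushforward}, Proposition \ref{prop:cpct}, Lemma \ref{lem:filter-compactness}), which is tied to $X=L^2_x$, bounded-support priors, and coercivity bounds such as Lemma \ref{lem:coerc} --- none of which are hypotheses of Theorem \ref{thm:bipcompact}. Your replacement --- domination $\mu^{\Delta,y}\le C\,\mup$, hence uniform tightness (Ulam's theorem, $X$ Polish) and uniformly integrable first moments, which together characterize $W_1$-pre-compactness --- is more elementary and is precisely what makes the theorem true in its stated generality (arbitrary separable Banach $X$, arbitrary $\mup\in\cP_1(X)$). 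You also supply the only argument in sight for the final representation: weak-$\ast$ limits of the densities in $L^\infty(\mup)$, identification with the $W_1$-limit through $C_b(X)$ test functions, and $\mup$-a.e.\ positivity of the limiting density via the tail condition \ref{ass:noise3} plus Chebyshev; and you rightly read $\Phi^\ast$ as implicitly $y$-dependent, since a genuinely $y$-independent potential would force all $\mu^{\ast,y}$ to coincide. Two small points to make this airtight: sequential weak-$\ast$ compactness in $L^\infty(\mup)=(L^1(\mup))^\ast$ uses the separability of $L^1(\mup)$ (which holds because $X$ is Polish and $\mup$ is Borel), and --- as you yourself note --- the stability constant additionally carries $\Vert \bar{u}\Vert_{L^1(\mup)}$ beyond the stated $C(R,M,\rho)$, consistent with the analogous dependence in Theorem \ref{thm:filtering-wp}.
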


We emphasize that in this case, the mere uniform boundedness of the mappings $\cL^\Delta: X \to \R^d$ is sufficient to obtain uniform stability and compactness. 

\section{Derivation of the main results}
\label{sec:derivation}

In this section, we provide the detailed mathematical derivation of the main results stated in the previous section. 

\subsection{Recursive filtering}
\label{sec:recursive}

We begin by providing a proof of the equivalence between the recursive filtering scheme of Remark \ref{rmk:filtering} and \eqref{eq:DAsol}.

\begin{proof}[Proof of Proposition \ref{prop:filterpf}]
We proceed by induction on $j$. The case $j=0$ is trivial, since
\[
\tilde{\nu}^{\emptyset}_{t_0}
\;
=
\;
\cS^\dagger_{t_0,\#} \mup
\;
=
\;
\cS^\dagger_{t_0,\#} \mu^{\emptyset}.
\]
 For $j\ge 1$, we integrate against an arbitrary, integrable (cylindrical) test function $\Psi(u)$ to find, with $\delta t_j = t_j-t_{j-1}$:
\begin{align*}
\int_{L^2_x} \Psi(u)
\,
\tilde{\nu}^{y_{1:j}}_{t_j}(du)
=
\int_{L^2_x} \Psi(u)
\,
\left[\cS^\dagger_{\delta t_{j},\#}\tilde{\nu}^{y_{1:j}}_{t_{j-1}}\right](du)
=
\int_{L^2_x} \Psi\left(\cS^\dagger_{\delta t_{j}}(u)\right)
\, \tilde{\nu}^{y_{1:j}}_{t_{j-1}}(du).
\end{align*}
Substitution of the correction step \eqref{eq:correction}, yields
\begin{align*}
\int_{L^2_x} \Psi(u)
\,
\tilde{\nu}^{y_{1:j}}_{t_{j-1}}(du)
=
\int_{L^2_x} \Psi\left(\cS^\dagger_{\delta t_{j}}(u)\right)
q^\dagger_j(u) 
\, \tilde{\nu}^{y_{1:(j-1)}}_{t_{j-1}}(du),
\end{align*}
where
\[
q^\dagger_j(u)
=
\frac{1}{Z_j^\dagger(y_j)}
\exp\left(
-\tilde{\Phi}^{\dagger,y_j}_j(u)
\right).
\]
By the induction hypothesis, the measure $\tilde{\nu}^{y_{1:(j-1)}}_{t_{j-1}}$ can be written as a push-forward: 
\[
\tilde{\nu}^{y_{1:(j-1)}}_{t_{j-1}}
=
\cS^\dagger_{t_{j-1},\#}
\mu^{y_{1:(j-1)}}.
\]
Thus, substituting above, we find
\begin{align*}
\int_{L^2_x} \Psi(u)
\,
\tilde{\nu}^{y_{1:j}}_{t_j}(du)
&=
\int_{L^2_x} \Psi(\cS^\dagger_{\delta t_j}(u))
\,
q^\dagger_j(u) \, 
\left[
\cS^{\dagger}_{t_{j-1},\#} \mu^{y_{1:(j-1)}}(u)
\right](du)
\\
&= 
\int_{L^2_x} \Psi(\cS^\dagger_{t_j}(\bar{u}))
q^\dagger_j(\cS^\dagger_{t_{j-1}}(\bar{u}))
\, 
\mu^{y_{1:(j-1)}}(d\bar{u}),
\end{align*}
where we have used that $\cS^\dagger_{t_{j-1}} \circ \cS^\dagger_{\delta t_j} = \cS^\dagger_{t_j}$ to simplify the argument of $\Psi$ in the last step. We now note that, by our definition of $q_j^{\dagger}$ and $\mu^{y_{1:(j-1)}}$, we have
\begin{align*}
q^{\dagger}_j(\cS^{\dagger}_{t_{j-1}}(\bar{u})) \, \mu^{y_{1:(j-1)}}(d\bar{u})
&\propto
\exp\left(
-\tilde{\Phi}_j^{\dagger,y_j}\left(\cS^\dagger_{t_{j-1}}(\bar{u})\right)
\right)
\\
&\qquad 
\times 
\exp\left(
-
\sum_{k=1}^{j-1} 
\Phi_k^{\dagger,y_k}(\bar{u})
\right) \, \mup(d\bar{u})
\\
&=
\exp\left(
-
\sum_{k=1}^{j} 
\Phi_k^{\dagger,y_k}(\bar{u})
\right) \, \mup(d\bar{u}),
\end{align*}
where we have taken into account the identity $\tilde{\Phi}^{\dagger,y_j}_j\left(\cS^\dagger_{t_{j-1}}(\bar{u})\right) = \Phi^{\dagger,y_j}_j(\bar{u})$ in the last step (cp. equation~\eqref{eq:Phidagger2}). The proportionality constant can be determined by normalization. The last expression is equal to $\mu^{y_{1:j}}$, and hence
\[
\int_{L^2_x} \Psi(u) \, \tilde{\nu}^{y_{1:j}}_{t_j}(du)
=
\int_{L^2_x} \Psi(\cS^\dagger_{t_j}(u)) \, \mu^{y_{1:j}}(du)
=
\int_{L^2_x} \Psi(u) \, \left[\cS^\dagger_{t_j,\#}\mu^{y_{1:j}}\right](du).
\]
Since $\Psi$ was an arbitrary (cylindrical) test function, the claimed identity follows.
\end{proof}

\subsection{Stability results for the BIP}
\label{sec:BIP}

The goal of the present section is to derive general stability results for the Bayesian inverse problem (BIP). Combined with the push-forward representation of the Bayesian filtering and smoothing distributions of Proposition \ref{prop:da-bip}, these estimates form the basis of our analysis of the well-posedness of the Bayesian DA problem. 

While the temporal nature of the measurement data is important for the filtering distribution $\nu^{\bm{y}}_t$, the solutions to the BIP of interest take the form $\mu^{y_{1:k}}$ of \eqref{eq:bipsol} for some given $k$, corresponding to a combined observable $\tL^\dagger: X \to \R^{\tilde{d}}$, $\tilde{d} = d \times k$, of the form $\tL^\dagger(\bar{u}) = (\cL^\dagger_1(\bar{u}),\dots,\cL^\dagger_k(\bar{u}))$, and with combined measurement $\tilde{y} = (y_1,\dots, y_k)$ of the form $\tilde{y}=\tL^\dagger(\bar{u}) + \tilde{\eta}$ with $\tilde{\eta} = (\eta_1,\dots, \eta_k)$. Hence, resulting in a standard BIP. Dropping the tildes in the following, we will thus study the general properties of posteriors for the BIP for measurable observables
\begin{align}
\label{eq:bipL}
\L^\dagger: \, X \to \R^d,
\qquad
\bar{u} \mapsto \L^\dagger(\bar{u}),
\end{align}
and with $\R^d$-valued noise $\eta \sim \rho(y) \, dy$ satisfying the assumptions \ref{ass:noise1}--\ref{ass:noise3}.
The solution of the BIP for measurement operator \eqref{eq:bipL} is then given \cite{Latz2020} by the posterior
\begin{align} \label{eq:posterior}
\mu^{y}(d\bar{u})
=
\frac{1}{Z^\dagger(y)}
\exp\left(
-\Phi^{\dagger,y}(\bar{u})
\right)
\, \mup(d\bar{u}),
\end{align}
where
\begin{align}\label{eq:loglike}
\Phi^{\dagger,y}(\bar{u})
:=
-\log \rho\left(
y - \L^\dagger(\bar{u})
\right)
\end{align}
denotes the log-likelihood function, and
\begin{align} \label{eq:Z}
Z^\dagger(y)
=
\int_X 
\exp
\left(
-\Phi^{\dagger,y}(\bar{u})
\right)
\, \mup(d\bar{u}),
\end{align}
is the required normalization constant. As is customary, we will denote the Radon-Nikodym derivative of $\mu^{y}$ with respect to $\mu$ by $d\mu^{y}/d\mup$, i.e.
\begin{align} \label{eq:dens}
\frac{d\mu^{y}}{d\mup}(\bar{u})
=
\frac{1}{Z^\dagger(y)}
\exp\left(
-\Phi^{\dagger,y}(\bar{u})
\right).
\end{align}
Given an approximation $\cL^\Delta \approx \cL^\dagger$, we similarly define $\mu^{\Delta,y}$, $\Phi^{\Delta,y}(\bar{u})$ and $Z^\Delta(y)$, with $\cL^{\Delta}(\bar{u})$ replacing $\cL^\dagger(\bar{u})$, in equations \eqref{eq:posterior}--\eqref{eq:Z}.

While the existence of a solution to the BIP is ensured by the non-negativity of the noise distribution $\rho(y)$, the stability and compactness results of the present work will be based on assumptions \ref{ass:noise1}--\ref{ass:noise3} on the noise. We begin our discussion by noting the following immediate observations from these assumptions:

\begin{lemma} \label{lem:noise}
Let $\cL^\dagger: X \to \R^d$ be any map. If the noise $\eta \sim \rho(y) \, dy$ satisfies assumptions \ref{ass:noise1}--\ref{ass:noise3}, then we have for all $y,y' \in \R^d$
\begin{align} \label{eq:phi1}
\left|
e^{-\Phi^{\dagger,y}(\bar{u})} - e^{-\Phi^{\dagger,y'}(\bar{u})}
\right|
\le 
L_\rho |y-y'|_\Gamma.
\end{align}
The log-likelihood $\Phi^{\Delta,y}$ is bounded from below, uniformly in $\Delta>0$ and $y\in \R^d$: there exists a constant $C_\rho \ge 0$ depending only on $\sup_{y\in \R^d}\rho(y) < \infty$, such that
\begin{align} \label{eq:phi3}
\essinf_{\bar{u}\in X} \Phi^{\dagger,y}(\bar{u}) \ge -C_\rho, 
\quad \forall \, y\in \R^d.
\end{align}
There exists a constant $B_\rho \ge 0$, such that
\begin{align} \label{eq:phi4}
\Phi^{\dagger,y}(\bar{u}) \le B_\rho + \frac12 |y-\L^\dagger(\bar{u})|_\Gamma^2.
\end{align}
In particular, we have
\begin{align} \label{eq:phi5}
\Phi^{\dagger,y}(\bar{u}) \le B_\rho + |y|_\Gamma^2 + |\L^\dagger(\bar{u})|_\Gamma^2.
\end{align}
Furthermore, if $\cL^\Delta$ is an approximation of $\cL^\dagger$, then for the same constant $L_\rho$ as above:
\begin{align} \label{eq:phi2}
\left|
e^{-\Phi^{\dagger,y}(\bar{u})} - e^{-\Phi^{\Delta,y}(\bar{u})}
\right|
\le 
L_\rho |\L^\dagger(\bar{u}) - \L^{\Delta}(\bar{u})|_\Gamma.
\end{align}
\end{lemma}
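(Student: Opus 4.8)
The plan is to establish each of the five bounds directly from the noise assumptions \ref{ass:noise1}--\ref{ass:noise3}, using the single unifying observation that, by the definition \eqref{eq:loglike} of the log-likelihood,
\[
e^{-\Phi^{\dagger,y}(\bar{u})} = \rho\left(y - \L^\dagger(\bar{u})\right),
\qquad
e^{-\Phi^{\Delta,y}(\bar{u})} = \rho\left(y - \L^\Delta(\bar{u})\right).
\]
Each estimate then amounts to transcribing a property of the density $\rho$ into a corresponding property of $\Phi$, and crucially none of them uses any regularity of the (possibly very irregular) forward maps $\L^\dagger$, $\L^\Delta$.

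First, for the two Lipschitz estimates \eqref{eq:phi1} and \eqref{eq:phi2}, I would invoke \ref{ass:noise1} directly. For \eqref{eq:phi1} the difference equals $\rho(y - \L^\dagger(\bar{u})) - \rho(y' - \L^\dagger(\bar{u}))$, and since the two arguments differ by exactly $y - y'$ (the contribution of $\L^\dagger(\bar{u})$ cancels), \ref{ass:noise1} yields the bound $L_\rho |y-y'|_\Gamma$. The estimate \eqref{eq:phi2} is obtained identically: the arguments $y - \L^\dagger(\bar{u})$ and $y - \L^\Delta(\bar{u})$ now differ by $\L^\Delta(\bar{u}) - \L^\dagger(\bar{u})$, so \ref{ass:noise1} produces $L_\rho |\L^\dagger(\bar{u}) - \L^\Delta(\bar{u})|_\Gamma$ with the \emph{same} constant. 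The structural point exploited here is the translation invariance of the $\Gamma$-norm: subtracting a common quantity from the argument of $\rho$ leaves the $\Gamma$-distance between the two arguments unchanged.

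Next, for the lower bound \eqref{eq:phi3}, I would use \ref{ass:noise2}: since $\rho(y - \L^\dagger(\bar{u})) \le \sup_{z} \rho(z) < \infty$, taking negative logarithms reverses the inequality and gives $\Phi^{\dagger,y}(\bar{u}) \ge -\log\left(\sup_{z} \rho(z)\right)$, uniformly in $\bar{u}$ and $y$; setting $C_\rho := \max\{0, \log \sup_{z}\rho(z)\}$ makes the constant non-negative as claimed, and the identical argument applied to $\rho(y - \L^\Delta(\bar{u}))$ gives the same bound for $\Phi^{\Delta,y}$, hence uniformity in $\Delta$. For the upper bound \eqref{eq:phi4} I would instead invoke the tail-condition \ref{ass:noise3}: substituting $y - \L^\dagger(\bar{u})$ into \eqref{eq:tail} and taking negative logarithms gives $\Phi^{\dagger,y}(\bar{u}) \le \tfrac12 |y - \L^\dagger(\bar{u})|_\Gamma^2 + \log C$, so that $B_\rho := \log C$ (with $C$ the tail constant) works. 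Finally, \eqref{eq:phi5} follows from \eqref{eq:phi4} via the elementary estimate $|y - \L^\dagger(\bar{u})|_\Gamma^2 \le (|y|_\Gamma + |\L^\dagger(\bar{u})|_\Gamma)^2 \le 2|y|_\Gamma^2 + 2|\L^\dagger(\bar{u})|_\Gamma^2$, which absorbs the factor $\tfrac12$.

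I do not expect any genuine obstacle here: the lemma is essentially a dictionary translating \ref{ass:noise1}--\ref{ass:noise3} into the language of $\Phi$. The only points requiring care are bookkeeping of constants — ensuring $C_\rho \ge 0$ in \eqref{eq:phi3}, and verifying that $L_\rho$, $C_\rho$, $B_\rho$ are genuinely independent of $\bar{u}$, $y$, and $\Delta$. This independence is immediate precisely because all three constants derive from \emph{global} bounds on $\rho$ alone, never from any property of the forward map; it is this feature that ultimately underlies the viscosity- and discretization-independent stability claimed in the main theorems.
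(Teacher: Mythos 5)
Your proposal is correct and is exactly the argument the paper intends: the paper presents Lemma \ref{lem:noise} without proof, as ``immediate observations'' from \ref{ass:noise1}--\ref{ass:noise3}, and your verification (rewriting $e^{-\Phi^{\dagger,y}(\bar{u})} = \rho(y-\L^\dagger(\bar{u}))$ and transcribing the Lipschitz, boundedness and tail properties of $\rho$, with translation invariance of $|\slot|_\Gamma$ handling \eqref{eq:phi1} and \eqref{eq:phi2}) fills in precisely those details. The only cosmetic remark is that, as you did for $C_\rho$, one should take $B_\rho := \max\{0,\log C\}$ to guarantee $B_\rho \ge 0$.
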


We now discuss the stability of the posterior $\mu^{y}$ for the BIP with respect to the measurement $y$. We note that our discussion of stability for the BIP overlaps in part with a similar discussion contained in \cite{Latz2020,Sprungk2020}. In particular, \cite{Sprungk2020} contains a general discussion of the stability of posteriors with respect to both the log-likelihood and priors, and with respect to a number of distance metrics between probability measures. Since some necessary estimates have not appeared in \cite{Latz2020,Sprungk2020}, at least in the precise form needed for our purposes, we have decided to include detailed proofs in this manuscript.

We begin our discussion of the stability properties of the BIP with the following lemma, proving that the sequence of densities $d\mu^{\Delta,y}/d\mup$ is uniformly bounded in $L^\infty({\mup})$. 

Recalling that $\Vert \cL^\dagger \Vert_{L^2(\mup)} := \left(\int_X \Vert \cL^\dagger(\bar{u}) \Vert_{X}^2 \, \mup(d\bar{u}) \right)^{1/2}$, we now state the following

\begin{lemma} \label{lem:Zbound}
Let $d\mu^{y}/d\mup$ be given by \eqref{eq:dens}, and $Z^\dagger(y)$ be defined as in \eqref{eq:Z}. Then 
\begin{align} \label{eq:Z-estimate}
Z^\dagger(y)
\ge \exp\left(-\int_X \Phi^{\dagger,y}(\bar{u}) \, \mup(d\bar{u}) \right),
\end{align}
and
\begin{align} \label{eq:dmu-estimate}
\frac{d\mu^{y}}{d\mup}(\bar{u})
\le 
\exp\left(\int_X \Phi^{\dagger,y}(\bar{u}) \, \mup(d\bar{u})
-
\essinf_{u\in X} \Phi^{\dagger,y}(\bar{u}) 
\right), \quad \forall \, \bar{u}\in X,
\end{align}
In particular, if the noise $\eta\sim \rho(y) dy$ satisfies \ref{ass:noise1}--\ref{ass:noise3}, then there exists a constant $C>0$ depending only on the noise distribution $\rho(y)$, such that 
\begin{align} \label{eq:Z-estimate2}
Z^\dagger(y) 
\ge \frac1C
\exp\left(-|y|_\Gamma^2-\Vert \L^\dagger \Vert_{L^2({\mup})}^2\right),
\end{align}
and 
\begin{align} \label{eq:dmu-estimate2}
\frac{d\mu^{\dagger,y}}{d\mup}(u)
\le 
C\exp\left(|y|^2_\Gamma +\Vert \L^\dagger \Vert_{L^2({\mup})}^2 \right),
\quad
\forall \,u \in X.
\end{align}
\end{lemma}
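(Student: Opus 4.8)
The plan is to establish the two structural estimates \eqref{eq:Z-estimate} and \eqref{eq:dmu-estimate} first, using only that $\mup$ is a probability measure and that $\exp$ is convex, and then to specialize them to the quantitative bounds \eqref{eq:Z-estimate2} and \eqref{eq:dmu-estimate2} by inserting the pointwise control on the log-likelihood $\Phi^{\dagger,y}$ furnished by Lemma \ref{lem:noise}.

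For the lower bound on the normalization constant, I would apply Jensen's inequality to the convex function $\exp$ and the probability measure $\mup$: writing $Z^\dagger(y) = \int_X \exp(-\Phi^{\dagger,y}(\bar{u})) \, \mup(d\bar{u})$ as in \eqref{eq:Z}, Jensen yields $Z^\dagger(y) \ge \exp\left(-\int_X \Phi^{\dagger,y}(\bar{u}) \, \mup(d\bar{u})\right)$, which is exactly \eqref{eq:Z-estimate}. For the pointwise density bound, I would start from the explicit formula \eqref{eq:dens}, substitute the lower bound on $Z^\dagger(y)$ just obtained into the denominator, and bound the numerator $\exp(-\Phi^{\dagger,y}(\bar{u}))$ from above by $\exp(-\essinf_{u} \Phi^{\dagger,y}(u))$; this gives \eqref{eq:dmu-estimate} directly.

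To pass to the quantitative estimates, I would bound the integral $\int_X \Phi^{\dagger,y}(\bar{u}) \, \mup(d\bar{u})$ from above using \eqref{eq:phi5}, namely $\Phi^{\dagger,y}(\bar{u}) \le B_\rho + |y|_\Gamma^2 + |\L^\dagger(\bar{u})|_\Gamma^2$, whose $\mup$-integral is controlled by $B_\rho + |y|_\Gamma^2 + \Vert \L^\dagger \Vert_{L^2(\mup)}^2$ (absorbing the equivalence of the Euclidean and $\Gamma$-norms on $\R^d$ into the constant). Feeding this into \eqref{eq:Z-estimate} gives \eqref{eq:Z-estimate2} with $C = e^{B_\rho}$; combining the same upper bound with the lower bound $\essinf_u \Phi^{\dagger,y}(u) \ge -C_\rho$ from \eqref{eq:phi3} in \eqref{eq:dmu-estimate} gives \eqref{eq:dmu-estimate2} with $C = e^{B_\rho + C_\rho}$.

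The only genuine subtlety — and the step I would verify most carefully — is the well-definedness of the integrals, i.e. that $\Phi^{\dagger,y}$ is $\mup$-integrable, so that Jensen's inequality applies and the right-hand sides are finite. This is guaranteed precisely by Lemma \ref{lem:noise}: the lower bound \eqref{eq:phi3} prevents $\int_X \Phi^{\dagger,y} \, d\mup$ from diverging to $-\infty$, while the quadratic upper bound \eqref{eq:phi5} keeps it below $+\infty$ as soon as $\L^\dagger \in L^2(\mup)$; if instead $\Vert \L^\dagger \Vert_{L^2(\mup)} = \infty$, both quantitative estimates hold trivially. Everything else is a routine manipulation of the exponential formula for the density.
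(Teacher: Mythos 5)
Your proposal is correct and follows essentially the same route as the paper's own proof: Jensen's inequality applied to the convex exponential gives \eqref{eq:Z-estimate}, the numerator bound $\exp(-\Phi^{\dagger,y}(\bar{u})) \le \exp(-\essinf_u \Phi^{\dagger,y}(u))$ combined with the $Z^\dagger$ lower bound gives \eqref{eq:dmu-estimate}, and the quantitative versions follow by inserting \eqref{eq:phi5} and \eqref{eq:phi3} exactly as you do, with the same constants $C = e^{B_\rho}$ and $C = e^{B_\rho + C_\rho}$. Your closing remark on the $\mup$-integrability of $\Phi^{\dagger,y}$ (needed for Jensen and for finiteness of the right-hand sides) is a point the paper leaves implicit, and is a worthwhile addition rather than a deviation.
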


\begin{proof}
Since the exponential (Gaussian-like) factor in the definition of $d\mu^{\dagger,y}/d\mup$, eq. \eqref{eq:dens}, is bounded from above by $\exp(-\essinf_{u\in X} \Phi^{\dagger,y}(u))$, it suffices to prove the lower bound on $Z^\dagger(y)$. From the convexity of $z \mapsto \exp(-z)$ and Jensen's inequality, we obtain
\[
\exp\left(
- \int_{X} \Phi^{\dagger,y}(\bar{u}) \, \mup(d\bar{u})
\right)
\le
\int_{X} \exp(-\Phi^{\dagger,y}(\bar{u})) \, \mup(d\bar{u}) 
= Z^\dagger(y).
\]
This implies the first two estimates \eqref{eq:Z-estimate} and \eqref{eq:dmu-estimate} of this lemma. 

Under the noise assumptions \ref{ass:noise1}--\ref{ass:noise3}, and by \eqref{eq:phi5}, there exists $B_\rho>0$ depending only on the noise distribution $\rho(y)$, such the last term can be bounded from below, yielding
\begin{align*}
Z^\dagger(y)
\ge
\exp\left(-B_\rho -|y|_\Gamma^2 - \int_X |\L^\dagger(\bar{u})|_\Gamma^2 \, \mup(d\bar{u})\right),
\end{align*}
and thus the claimed inequality \eqref{eq:Z-estimate2} for $Z^\dagger(y)$ with $C = \exp(B_\rho)$.
%\sam{
Furthermore, by \eqref{eq:phi3}, there exists $C_\rho$, such that 
\[
\essinf_{\bar{u}\in X} \Phi^{\dagger,y}(\bar{u}) \ge -C_\rho.
\]
Thus the claimed inequality \eqref{eq:dmu-estimate2} holds with $C = \exp(B_\rho + C_\rho)$.
%}
\end{proof}

We next discuss the stability of $d\mu^{\dagger,y}/d\mup$ with respect to $y$. The following lemma shows that the map $y\mapsto d\mu^{\dagger,y}/d\mup$ is locally Lipschitz continuous with respect to the $L^\infty$-norm.

\begin{lemma}\label{lem:LinfLip}
Assume that the noise $\eta \sim \rho(y) \, dy$ satisfies \ref{ass:noise1}-\ref{ass:noise3}. Let $\L^\dagger(\bar{u}) \in L^2({\mup})$. There exists a constant $C>0$, depending only on the noise distribution $\rho(y)$, such that
\begin{align} \label{eq:LinfLip}
\left\Vert
\frac{d\mu^{\dagger,y}}{d{\mup}} - \frac{d\mu^{\dagger,y'}}{d{\mup}}
\right\Vert_{L^\infty({\mup})}
\le
C |y-y'|_\Gamma\exp\left({|y|_\Gamma^2+|y'|_\Gamma^2+2\Vert \L^\dagger\Vert_{L^2({\mup})}^2}\right).
\end{align}
\end{lemma}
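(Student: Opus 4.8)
The plan is to control the difference of the two Radon--Nikodym derivatives via the classical ``add and subtract'' decomposition that separates the variation in the (unnormalized) likelihood from the variation in the normalization constant. Writing $g_y(\bar{u}) := e^{-\Phi^{\dagger,y}(\bar{u})} = \rho(y-\L^\dagger(\bar{u}))$ and $Z^\dagger(y) = \int_X g_y \, d\mup$, I would start from
\[
\frac{d\mu^{\dagger,y}}{d\mup}(\bar{u}) - \frac{d\mu^{\dagger,y'}}{d\mup}(\bar{u})
=
\frac{g_y(\bar{u}) - g_{y'}(\bar{u})}{Z^\dagger(y)}
+
g_{y'}(\bar{u}) \left( \frac{1}{Z^\dagger(y)} - \frac{1}{Z^\dagger(y')}\right),
\]
and estimate the two terms separately, taking suprema over $\bar{u}$ at the end.

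For the first term, the numerator is handled directly by the pointwise Lipschitz bound \eqref{eq:phi1} of Lemma \ref{lem:noise}, giving $|g_y(\bar{u}) - g_{y'}(\bar{u})| \le L_\rho |y-y'|_\Gamma$ uniformly in $\bar{u}$, while the denominator is bounded from below by the estimate \eqref{eq:Z-estimate2} on $Z^\dagger$ in Lemma \ref{lem:Zbound}. For the second term, I would rewrite the difference of reciprocals as $(Z^\dagger(y') - Z^\dagger(y))/(Z^\dagger(y)\,Z^\dagger(y'))$; the numerator is again controlled by \eqref{eq:phi1}, now integrated against the probability measure $\mup$, so that $|Z^\dagger(y') - Z^\dagger(y)| \le L_\rho |y-y'|_\Gamma$, and the product in the denominator is bounded below by the product of two applications of \eqref{eq:Z-estimate2}. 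Finally, the prefactor $g_{y'}(\bar{u}) = \rho(y'-\L^\dagger(\bar{u}))$ is bounded above uniformly by assumption \ref{ass:noise2} (equivalently, by the lower bound \eqref{eq:phi3} on $\Phi^{\dagger,y'}$).

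Collecting these estimates, both terms are bounded by $|y-y'|_\Gamma$ times a constant depending only on $\rho$ times the exponential factor $\exp(|y|_\Gamma^2 + |y'|_\Gamma^2 + 2\Vert \L^\dagger\Vert_{L^2(\mup)}^2)$; note that the smaller exponent arising from the first term, namely $|y|_\Gamma^2 + \Vert \L^\dagger\Vert_{L^2(\mup)}^2$, is dominated by this larger one, so that a single clean bound of the claimed shape \eqref{eq:LinfLip} results after renaming constants.

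I do not expect a genuine obstacle here, since the analytic content has been front-loaded into Lemmas \ref{lem:noise} and \ref{lem:Zbound}. The only points requiring mild care are (a) confirming that the exponential factors from the two terms combine into one clean exponent rather than a product of differing ones, and (b) observing that the estimate \eqref{eq:phi1} is \emph{pointwise} in $\bar{u}$, so it survives both the supremum over $\bar{u}$ (first term) and the integration against $\mup$ (normalization term). This is precisely the standard argument for the Lipschitz dependence of Gibbs-type posterior densities on the data, here carried out in the $L^\infty(\mup)$ norm.
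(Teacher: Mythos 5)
Your proposal is correct and follows essentially the same route as the paper's own proof: the identical add-and-subtract decomposition into a likelihood-difference term and a normalization-difference term, with the numerators controlled by the pointwise Lipschitz bound \eqref{eq:phi1} and the denominators (and the prefactor $g_{y'}$) controlled by Lemma \ref{lem:Zbound} together with \eqref{eq:phi3}. The only difference is cosmetic grouping of the second term, which is algebraically the same expression.
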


\begin{proof}
Fix $\bar{u}\in X$ for the moment. Denote $e(y) := e(y;\bar{u}) = \exp(-\Phi^{\dagger,y}(\bar{u}))$, so that
\begin{align*}
\frac{d\mu^{\dagger,y}}{d{\mup}} - \frac{d\mu^{\dagger,y'}}{d{\mup}}
&= 
\frac{e(y)}{Z^{\dagger}(y)} - \frac{e(y')}{Z^{\dagger}(y')}
\\
&= 
\frac{e(y)-e(y')}{Z^\dagger(y)} + \frac{e(y')}{Z^\dagger(y')} \frac{(Z^\dagger(y')-Z^\dagger(y))}{Z^\dagger(y)}.
\end{align*}
By \eqref{eq:phi1}, we can estimate $|e(y) - e(y')|\le C |y-y'|_\Gamma$. Next, we note that this bound for $e(y)$ also implies that
\[
|Z^\dagger(y)-Z^\dagger(y')|
\le
\int_X |e(y;\bar{u})-e(y';\bar{u})| \, {\mup}(d\bar{u})
\le C |y-y'|_\Gamma \underbrace{\int_X 1 \, {\mup}(d\bar{u})}_{=1}.
\]
Hence, 
\begin{align*}
\left|
\frac{d\mu^{y}}{d{\mup}} - \frac{d\mu^{y'}}{d{\mup}}
\right|
&\le
\frac{C|y-y'|_\Gamma}{Z^\dagger(y)} + \frac{e(y')}{Z^\dagger(y')} \frac{C|y-y'|_\Gamma}{Z^\dagger(y)}.
\end{align*}
We proceed to estimate the factors multiplying $|y-y'|$ in the last two terms: From Lemma \ref{lem:Zbound}, we can estimate 
\[
\frac{1}{Z^\dagger(y)}
\le
C e^{|y|_\Gamma^2+\Vert \L^\dagger\Vert_{L^2({\mup})}^2} 
\le 
C e^{|y|_\Gamma^2 + |y'|_\Gamma^2+2\Vert \L^\dagger\Vert_{L^2({\mup})}^2},
\]
and
\[
\frac{e(y')}{Z^\dagger(y')} \frac{1}{Z^\dagger(y)}
\le
C e^{|y|_\Gamma^2 + |y'|_\Gamma^2+2\Vert \L^\dagger\Vert_{L^2({\mup})}^2}.
\]
Combining these estimates, we conclude that
\begin{align*}
\left|
\frac{d\mu^{y}}{d{\mup}} - \frac{d\mu^{y'}}{d{\mup}}
\right|
&\le
2C|y-y'|_\Gamma\exp\left({|y|_\Gamma^2+|y'|_\Gamma^2+2\Vert \L^\dagger\Vert_{L^2({\mup})}^2}\right).
\end{align*}
Since $\bar{u}\in X$ was arbitrary, the claimed inequality follows by taking the supremum over $\bar{u}\in X$ on the left.
\end{proof}

Let us also remark in passing the following Lemma, whose proof is analogous to the proof of Lemma \ref{lem:LinfLip}.

\begin{lemma}\label{lem:LinfLip2}
Assume that the noise $\eta \sim \rho(y) \, dy$ satisfies \ref{ass:noise1}--\ref{ass:noise3}. Let $\L^\dagger(\bar{u}), \L^\Delta(\bar{u}) \in L^2({\mup})$, and $y\in \R^d$, and denote the resulting (approximate) solution of the BIP by $\mu^{y}$, $\mu^{\Delta,y}$, respectively. There exists a constant $C>0$, depending only on the noise distribution $\rho(y)$, such that for any $p\in [1,\infty]$, we have
\begin{align*}
\Bigg\Vert
\frac{d\mu^{\Delta,y}}{d{\mup}} 
- 
\frac{d\mu^{y}}{d{\mup}}
\Bigg\Vert_{L^p(\mup)}
&\le
C \left\Vert 
\L^\Delta(\bar{u})-\L^\dagger(\bar{u})
\right\Vert_{L^p(\mup)}
\\
&\qquad 
\times \exp\left({2|y|_\Gamma^2+\Vert \L^\Delta\Vert_{L^2({\mup})}^2 + \Vert \L^\dagger\Vert_{L^2({\mup})}^2}\right).
\end{align*}
\end{lemma}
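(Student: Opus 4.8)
The plan is to mirror the proof of Lemma \ref{lem:LinfLip}, but now to vary the forward operator from $\L^\dagger$ to $\L^\Delta$ at a \emph{fixed} measurement $y$ (rather than varying the measurement $y\mapsto y'$ for a fixed operator), while tracking the full $L^p(\mup)$-dependence instead of only the $L^\infty$-bound. Fix $\bar u\in X$ and abbreviate $e^\dagger(\bar u):=\exp(-\Phi^{\dagger,y}(\bar u))$ and $e^\Delta(\bar u):=\exp(-\Phi^{\Delta,y}(\bar u))$, so the two densities are $e^\dagger/Z^\dagger(y)$ and $e^\Delta/Z^\Delta(y)$. As in Lemma \ref{lem:LinfLip}, I would first decompose the difference additively, isolating a numerator difference from a normalization difference:
\[
\frac{d\mu^{\Delta,y}}{d\mup} - \frac{d\mu^{y}}{d\mup}
=
\frac{e^\Delta - e^\dagger}{Z^\dagger(y)}
+
\frac{e^\Delta}{Z^\Delta(y)}\,\frac{Z^\dagger(y) - Z^\Delta(y)}{Z^\dagger(y)}.
\]

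Next I would control the two building blocks. The numerator difference is handled pointwise in $\bar u$ by the bound \eqref{eq:phi2} of Lemma \ref{lem:noise}, namely $|e^\Delta(\bar u)-e^\dagger(\bar u)|\le L_\rho\,|\L^\dagger(\bar u)-\L^\Delta(\bar u)|_\Gamma$; this is exactly the analogue of the estimate $|e(y)-e(y')|\le C|y-y'|_\Gamma$ used in Lemma \ref{lem:LinfLip}. Integrating this bound against $\mup$ and using that $\mup$ is a probability measure (so $\Vert\slot\Vert_{L^1(\mup)}\le\Vert\slot\Vert_{L^p(\mup)}$, up to norm-equivalence on $\R^d$ absorbed into $C$) yields $|Z^\dagger(y)-Z^\Delta(y)|\le L_\rho\,\Vert\L^\dagger-\L^\Delta\Vert_{L^p(\mup)}$ for every $p\in[1,\infty]$. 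The normalization factors are then bounded by Lemma \ref{lem:Zbound}: estimate \eqref{eq:Z-estimate2} gives $1/Z^\dagger(y)\le C\exp(|y|_\Gamma^2+\Vert\L^\dagger\Vert_{L^2(\mup)}^2)$, while \eqref{eq:dmu-estimate2} gives $e^\Delta/Z^\Delta(y)=d\mu^{\Delta,y}/d\mup\le C\exp(|y|_\Gamma^2+\Vert\L^\Delta\Vert_{L^2(\mup)}^2)$ uniformly in $\bar u$.

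I would then take the $L^p(\mup)$-norm of the decomposition, using the triangle inequality and that the $Z$-factors are constants in $\bar u$. The first term contributes $Z^\dagger(y)^{-1}\Vert e^\Delta-e^\dagger\Vert_{L^p(\mup)}\le C\,\Vert\L^\dagger-\L^\Delta\Vert_{L^p(\mup)}\exp(|y|_\Gamma^2+\Vert\L^\dagger\Vert_{L^2(\mup)}^2)$. In the second term I pull out the scalar factor $|Z^\dagger-Z^\Delta|/Z^\dagger(y)$ and bound the remaining $\Vert e^\Delta/Z^\Delta(y)\Vert_{L^p(\mup)}\le\Vert d\mu^{\Delta,y}/d\mup\Vert_{L^\infty(\mup)}$ (again since $\mup$ is a probability measure), producing $C\,\Vert\L^\dagger-\L^\Delta\Vert_{L^p(\mup)}\exp(2|y|_\Gamma^2+\Vert\L^\dagger\Vert_{L^2(\mup)}^2+\Vert\L^\Delta\Vert_{L^2(\mup)}^2)$. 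Since this second exponent dominates the first, both terms are absorbed into a single constant times $\exp(2|y|_\Gamma^2+\Vert\L^\dagger\Vert_{L^2(\mup)}^2+\Vert\L^\Delta\Vert_{L^2(\mup)}^2)$, which is the claimed estimate.

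The exponential bookkeeping is routine; the only genuinely load-bearing point, and the place to be careful, is the uniform-in-$p$ treatment: one must consistently exploit $\mup(X)=1$ to nest the norms $\Vert\slot\Vert_{L^1(\mup)}\le\Vert\slot\Vert_{L^p(\mup)}\le\Vert\slot\Vert_{L^\infty(\mup)}$, so that the normalization difference (an $L^1$-type integral) and the pointwise $L^\infty$ density bound of Lemma \ref{lem:Zbound} both combine cleanly against the $L^p(\mup)$-norm of $\L^\dagger-\L^\Delta$. I expect no essential obstacle beyond this, and finiteness of the right-hand side is guaranteed by the hypothesis $\L^\dagger(\bar u),\L^\Delta(\bar u)\in L^2(\mup)$.
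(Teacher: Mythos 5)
Your proof is correct and follows exactly the route the paper intends: the paper's own proof of Lemma \ref{lem:LinfLip2} is just the remark that one repeats the proof of Lemma \ref{lem:LinfLip} with the roles of $y,y'$ and $\L^\Delta,\L^\dagger$ interchanged, which is precisely your decomposition, your use of \eqref{eq:phi2} in place of \eqref{eq:phi1}, and your appeal to Lemma \ref{lem:Zbound} for the normalization factors. Your explicit $L^p$ bookkeeping (pointwise bound on the numerator difference, $\Vert\slot\Vert_{L^1(\mup)}\le\Vert\slot\Vert_{L^p(\mup)}\le\Vert\slot\Vert_{L^\infty(\mup)}$ for the $Z$-difference and the density factor) fills in the details the paper leaves implicit, and does so correctly.
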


\begin{proof}
The proof is an almost verbatim repetition of the proof of Lemma \ref{lem:LinfLip},  with the roles of $y,y'$ and $\L^\Delta(\bar{u}), \L^\dagger(\bar{u})$ interchanged.
\end{proof}

\subsection{Stability results for Bayesian DA}

In this section, we investigate the stability properties of the solution of the filtering problem with respect to the measurements $y_1, \dots, y_N$. Our analysis will be based on the push-forward representation~\eqref{eq:da-bip} of the previous section and the stability results for the BIP in Section~\ref{sec:BIP}. We recall that the space $L^1_t(\P) = L^1([0,T];\P(X))$ defined in Section~\ref{sec:timep}, consists of all weak-$\ast$ measurable mappings $[0,T]\to \P(X)$, $t\mapsto \nu_t$, such that $\int_0^T \Vert u \Vert_{L^2_x} \, d\nu_t(u) \, dt < \infty$, with metric $d_T(\nu_t,\nu'_t) 
:=
\int_0^T 
W_1(\nu_t,\nu_t')
\, dt$.

We can now state the following lemma
\begin{lemma} \label{lem:filterstab}
Let $T>0$. Let $\mup\in \P_1(X)$ be a prior with finite first moments. Assume that the solution operator $\cS^\dagger_t: X\to X$ satisfies \ref{ass:sol1}--\ref{ass:sol2}, and let $\nu^{y_{1:j}}_t$ be given by \eqref{eq:DAsol} for $t\in [0,T]$. Then for any $R>0$, there exists $C=C(R,\rho,N,B_\cS, \Vert \bar{u} \Vert_{L^1(\mup)})>0$, such that for all $t, \delta t \ge 0$, with $t+\delta t \in [0,T]$, we have
\begin{align} \label{eq:supWest}
W_1\left(
\nu^{y_{1:j}}_\tau,
\nu^{y_{1:j}'}_\tau
\right)
\le 
C
\left(
\sum_{k=1}^j
\left|y_k - y_k'\right|_{\Gamma}^2
\right)^{1/2}
\end{align}
and
\begin{align} \label{eq:Westint}
\int_{t}^{t+\delta t}
W_1\left(
\nu^{y_{1:j}}_\tau,
\nu^{y_{1:j}'}_\tau
\right)
\, d\tau
\le 
C \delta t
\left(
\sum_{k=1}^j
\left|y_k - y_k'\right|_{\Gamma}^2
\right)^{1/2}
,
\end{align}
for all $y_{1:j} = (y_1, \dots, y_j)$, $y_{1:j}'=(y_1',\dots, y_j')$ such that 
\[
\left(
\sum_{k=1}^j
\left|y_k\right|_{\Gamma_k}^2
\right)^{1/2}
\le R,
\quad
\left(
\sum_{k=1}^j
\left|y_k'\right|_{\Gamma_k}^2
\right)^{1/2}
\le R.
\]
\end{lemma}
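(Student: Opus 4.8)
The plan is to reduce the claim to the stability theory for the Bayesian inverse problem developed in Section~\ref{sec:BIP}, exploiting the push-forward identity of Proposition~\ref{prop:da-bip}, in a way that entirely circumvents the Lipschitz constant $L_\cS$ of the forward operator (which, by Proposition~\ref{prop:LipW1}, is exactly what a naive $W_1$-estimate would incur). By Proposition~\ref{prop:da-bip} we have $\nu^{y_{1:j}}_\tau = \cS^\dagger_{\tau,\#}\mu^{y_{1:j}}$, where $\mu^{y_{1:j}}$ is the posterior of the BIP for the combined observable $\tL^\dagger = (\L^\dagger_1,\dots,\L^\dagger_j)\colon X \to \R^{jd}$ and combined noise $(\eta_1,\dots,\eta_j)$ with product density $\prod_{k=1}^j \rho(z_k)$; the latter still satisfies \ref{ass:noise1}--\ref{ass:noise3} with constants depending only on $\rho$ and $j\le N$, and its natural norm is precisely $(\sum_{k=1}^j |z_k|_\Gamma^2)^{1/2}$. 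Dropping tildes as in Section~\ref{sec:BIP}, I note that the bound I aim to prove will be uniform in $\tau$, so the integrated estimate \eqref{eq:Westint} follows from \eqref{eq:supWest} by integrating over $[t,t+\delta t]$; hence it suffices to establish the pointwise-in-$\tau$ estimate \eqref{eq:supWest}.

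The crucial step is to control $W_1(\cS^\dagger_{\tau,\#}\mu, \cS^\dagger_{\tau,\#}\mu')$ for two posteriors sharing the common dominating measure $\mup$, with densities $f = d\mu^{y_{1:j}}/d\mup$ and $f' = d\mu^{y_{1:j}'}/d\mup$. Using Kantorovich--Rubinstein duality, and since $\cS^\dagger_{\tau,\#}(\mu - \mu')$ is a signed measure of total mass zero (so that adding constants to the test function is harmless), I may restrict to $1$-Lipschitz $\Phi$ with $\Phi(0)=0$, whence $|\Phi(u)| \le \Vert u\Vert_X$. This yields
\[
W_1(\cS^\dagger_{\tau,\#}\mu, \cS^\dagger_{\tau,\#}\mu') \le \int_X \Vert \cS^\dagger_\tau(\bar{u})\Vert_X\,|f(\bar{u})-f'(\bar{u})|\,\mup(d\bar{u}) \le B_\cS \int_X \Vert\bar{u}\Vert_X\,|f-f'|\,\mup(d\bar{u}),
\]
where the last step uses the growth bound \ref{ass:sol2}. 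This is the heart of the matter: the composition $\Phi\circ\cS^\dagger_\tau$ needs only to be \emph{measurable} with a linear pointwise bound, and no continuity of $\cS^\dagger_\tau$ is invoked, so $L_\cS$ never appears.

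It remains to estimate $\int_X \Vert\bar{u}\Vert_X\,|f-f'|\,\mup(d\bar{u})$. I would split $f-f' = Z^\dagger(y)^{-1}\big(e^{-\Phi^{\dagger,y}} - e^{-\Phi^{\dagger,y'}}\big) + e^{-\Phi^{\dagger,y'}}\big(Z^\dagger(y)^{-1} - Z^\dagger(y')^{-1}\big)$ and estimate each term using the Lipschitz bound \eqref{eq:phi1} of Lemma~\ref{lem:noise}, the consequence $|Z^\dagger(y)-Z^\dagger(y')| \le L_\rho|y-y'|_\Gamma$ obtained by integrating \eqref{eq:phi1}, and the upper bound $e^{-\Phi^{\dagger,y'}} = \rho(y'-\L^\dagger(\bar{u})) \le \sup\rho$ from \ref{ass:noise2}. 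Each resulting term carries a factor $|y_{1:j}-y_{1:j}'|$ (in the combined norm), integrates against $\Vert\bar{u}\Vert_X$ to produce a factor $\Vert\bar{u}\Vert_{L^1(\mup)}$, and is multiplied by inverse powers of the normalisation constants $Z^\dagger(y), Z^\dagger(y')$.

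The main obstacle is therefore a \emph{uniform lower bound} $Z^\dagger(y) \ge c(R)>0$ for $|y|_\Gamma \le R$, and it must be obtained \emph{without} finite second moments, so that Lemma~\ref{lem:Zbound} (whose bound involves $\Vert\L^\dagger\Vert_{L^2(\mup)}$) is not directly available. I would derive it from the tail condition \ref{ass:noise3} together with Markov's inequality: on $A_M := \set{\bar{u}\in X}{|\L^\dagger(\bar{u})| \le M}$ one has $|y - \L^\dagger(\bar{u})| \le R+M$, so $Z^\dagger(y) \ge C^{-1} e^{-\frac12(R+M)^2}\,\mup(A_M)$ by \ref{ass:noise3}, and choosing $M = 2\Vert\L^\dagger\Vert_{L^1(\mup)}$ forces $\mup(A_M)\ge \tfrac12$. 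Since the observables grow at most linearly in $\Vert\bar{u}\Vert_X$ by their Lipschitz property \eqref{eq:Lj} combined with \ref{ass:sol2}, the quantity $\Vert\L^\dagger\Vert_{L^1(\mup)}$ is controlled by $\Vert\bar{u}\Vert_{L^1(\mup)}$, $B_\cS$ and $N$; the lower bound then depends only on $R,\rho,N,B_\cS,\Vert\bar{u}\Vert_{L^1(\mup)}$, as required. Assembling these estimates gives \eqref{eq:supWest}, and integration in $\tau$ gives \eqref{eq:Westint}.
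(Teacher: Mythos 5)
Your proposal is correct, and its skeleton coincides with the paper's own proof: reduce to the BIP for the combined observable via the push-forward identity of Proposition~\ref{prop:da-bip}, observe that the product noise still satisfies \ref{ass:noise1}--\ref{ass:noise3} with constants depending only on $\rho$ and $N$, and then apply Kantorovich duality with the normalization $\Phi(0)=0$, so that $|\Phi(\cS^\dagger_\tau(\bar{u}))|\le \Vert \cS^\dagger_\tau(\bar{u})\Vert_X \le B_\cS\Vert\bar{u}\Vert_X$ by \ref{ass:sol2}; this converts an $L^\infty(\mup)$ bound on the density difference into the $W_1$-estimate at the cost of $B_\cS\Vert\bar{u}\Vert_{L^1(\mup)}$ rather than $\Lip(\cS^\dagger_\tau)$, and \eqref{eq:Westint} follows by integrating \eqref{eq:supWest} in $\tau$, exactly as in the paper. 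Where you genuinely diverge is in how the density difference is bounded: the paper simply invokes Lemma~\ref{lem:LinfLip}, whose proof rests on the Jensen-type lower bound for $Z^\dagger(y)$ in Lemma~\ref{lem:Zbound} and therefore carries the factor $\exp\bigl(2\Vert\L^\dagger\Vert_{L^2(\mup)}^2\bigr)$ --- i.e.\ it implicitly requires the observable to have finite \emph{second} moments under the prior, a hypothesis not stated in the lemma (which assumes only $\mup\in\cP_1(X)$) and silently suppressed when the paper asserts $C=C(\rho,N,R)$. You instead re-derive the $L^\infty$ estimate inline and obtain the uniform lower bound $Z^\dagger(y)\ge c(R)>0$ from the tail condition \ref{ass:noise3} together with Markov's inequality, which needs only $\Vert\L^\dagger\Vert_{L^1(\mup)}<\infty$; this is better matched to the first-moment hypothesis and quietly repairs a small gap in the paper's own chain of lemmas. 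The trade-off is that your route requires the observables to be Lipschitz (to deduce $\L^\dagger\in L^1(\mup)$ from \eqref{eq:Lj} and \ref{ass:sol2}) --- an assumption likewise absent from the lemma's statement but natural in the paper's setting --- so your constant also picks up dependence on $L_\cG$, $\cG_j(0)$ and $T$, a bookkeeping discrepancy shared, in worse form, by the paper's proof.
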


\begin{proof}
To simplify the notation in the following, we set
\[
|y_{1:j}|_{\Gamma}
:=
\left(
\sum_{k=1}^j
|y_k|_{\Gamma}^2
\right)^{1/2}.
\]
By Proposition \ref{prop:da-bip}, \eqref{eq:da-bip}, we have $\nu^{y_{1:j}}_t = \cS^\dagger_{t,\#}\mu^{y_{1:j}}$, where $\mu^{y_{1:j}}$ solves a BIP and is given by \eqref{eq:bipsol}. In fact, $\mu^{y_{1:j}}$ is the solution of a standard BIP with noise $\tilde{\eta} = (\eta_1, \dots, \eta_j)$. The noise distribution of $\tilde{\eta}$ satisfies assumption \ref{ass:noise1}--\ref{ass:noise3} with the distribution $\rho: \R^d \to \R$ replaced by $\tilde{\rho}: \R^{d\times j} \to \R$, where $\tilde{\rho}(y_1,\dots, y_j) := \prod_{k=1}^j \rho(y_k)$, and with altered constants in \ref{ass:noise1}--\ref{ass:noise3} depending now also on $j\le N$ in addition to $\rho$. Thus, by Lemma \ref{lem:LinfLip}, there exists a constant $C=C(\rho,N,R)>0$, depending only on the noise distribution, the total number of measurements $N$ and on $R>0$, such that we obtain
\begin{align} \label{eq:uprbound}
\left\Vert
\frac{
d\mu^{y_{1:j}}
}{
d\mup
}
-
\frac{
d\mu^{y_{1:j}'}
}{
d\mup
}
\right\Vert_{L^\infty(\mup)}
\le
C |y_{1:j} - y_{1:j}'|_{\Gamma}.
\end{align}

Let $\Phi(u) \in \Lip(X)$ be a function with Lipschitz constant $\le 1$. Then there exists $g(u)$ such that
\[
\Phi(u) - \Phi(0) 
=
g(u) \Vert u \Vert_{X},
\qquad 
|g(u)|\le 1.
\]
Now note that
\begin{align*}
\int_{X} \Phi(u) 
\left[
\nu_t^{y_{1:j}}(du) - \nu_t^{y_{1:j}'}(du)
\right]
&=
\int_{X} [\Phi(u)-\Phi(0)]
\left[
\nu_t^{\Delta,y_{1:j}}(du) - \nu_t^{\Delta,y_{1:j}'}(du)
\right]
\\
&=
\int_{X} g(u) \Vert u \Vert_{X}
\, 
\cS^{\dagger}_{t,\#}\left[
\mu^{y_{1:j}} - \mu^{y_{1:j}'}
\right](du)
\\
&=
\int_{X} g(\cS^{\dagger}_{t}(\bar{u}))\Vert S^{\dagger}_t(\bar{u})\Vert_{X}
\left[
\frac{d\mu^{y_{1:j}}}{d{\mup}} - \frac{d\mu^{y_{1:j}'}}{d{\mup}}
\right]
\, {\mup}(d\bar{u})
\\
&\le
\int_{X} \left|g(\cS^{\dagger}_{t}(\bar{u}))\right|\Vert \cS^{\dagger}_t(\bar{u})\Vert_{X}
\left|
\frac{d\mu^{y_{1:j}}}{d\mup} - \frac{d\mu^{y_{1:j}'}}{d\mup}
\right|
\, {\mup}(d\bar{u})
\\
&\explain{\le}{
\substack{
|g(u)|\le 1, \\
\Vert \cS^{\dagger}_t(\bar{u})\Vert_{X} \le B_\cS\Vert \bar{u} \Vert_{X}
}
}
B_\cS\int_{X} \Vert\bar{u} \Vert_{X}
\left|
\frac{d\mu^{y_{1:j}}}{d{\mup}} - \frac{d\mu^{y_{1:j}'}}{d{\mup}}
\right|
\, {\mup}(d\bar{u})
\\
&\le
B_\cS\left(\int_{X} \Vert \bar{u} \Vert_{X} \, {\mup}(d\bar{u})\right)
\left \Vert
\frac{d\mu^{y_{1:j}}}{d{\mup}} - \frac{d\mu^{y_{1:j}'}}{d{\mup}}
\right \Vert_{L^\infty({\mup})}.
\end{align*}
Taking the supremum over all $\Phi(u)$ such that $\Vert \Phi \Vert_{\Lip} \le 1$ on the left, and noting the upper bound \eqref{eq:uprbound} on the last term, we find
\[
W_1
\left(
\nu^{y_{1:j}}_t, \nu^{y_{1:j}'}_t
\right)
\le 
C |y_{1:j} - y_{1:j}'|_{\Gamma},
\]
where the constant $C = C(R,\rho,N,B_\cS, \Vert \bar{u} \Vert_{L^1(\mup)}) > 0$ depends on $R$, the noise distribution $\rho$, the number of measurements $N$, the first moment of the prior $\mup$ and on the boundedness constant $B_\cS$ of the forward operator $\cS^\dagger_t$ (cp. \ref{ass:sol2}). In particular, $C$ is independent of $y_{1:j}$, $y_{1:j}'$. This shows the upper bound \eqref{eq:supWest}. Integrating in time, we obtain the claimed inequality \eqref{eq:Westint}
\[
\int_{t}^{t+\delta t}
W_1\left(
\nu^{y_{1:j}}_t, \nu^{y_{1:j}'}_t
\right)
\, dt
\le 
C\delta t |y_{1:j} - y_{1:j}'|_{\Gamma}.
\]
\end{proof}

While the above estimate is applicable to the smoothing problem, i.e. with a fixed set of measurements for all $t\in [0,T]$, we will next prove a corresponding stability theorem for the solution of the \emph{filtering} problem; more precisely, we prove that if $\nu^{\bm{y}}_t$ denotes the solution of the filtering problem with prior $\mup\in \P_1(X)$, for a solution operator $\cS^\dagger_t: X\to X$ satisfying \ref{ass:sol1}--\ref{ass:sol2}, and measurements $\bm{y} = (y_1,\dots, y_N)$, then for any $R>0$, there exists $C = C(R,\rho,N,B_\cS, \Vert \bar{u} \Vert_{L^1(\mup)}) > 0$, such that
\begin{align} 
W_1\left(
\nu^{\bm{y}}_t,
\nu^{\bm{y}'}_t
\right)
\le 
C |\bm{y}-\bm{y}'|_{\Gamma},
\quad
\forall \, t\in [0,T],
\end{align}
and
\begin{align} 
\int_0^T
W_1\left(
\nu^{\bm{y}}_t,
\nu^{\bm{y}'}_t
\right)
\, dt
\le 
CT |\bm{y}-\bm{y}'|_{\Gamma},
\end{align}
for all $\bm{y},\bm{y}'$ such that $|\bm{y}|_{\Gamma},\,|\bm{y}'|_{\Gamma}\le R$. 

\begin{proof}[Proof of Theorem \ref{thm:filtering-wp}]
\label{pf:filtering-wp}
The claimed stability estimate follows readily from Lemma~\ref{lem:filterstab}: Indeed, $\nu^{\bm{y}}_t$ is defined piece-wise in time, for $t\ge 0$, as
\[
\nu^{\bm{y}}_t
=
\sum_{k=0}^N  1_{[t_{k},t_{k+1})}(t)\,\nu^{y_{1:k}}_{t},
\]
where we formally define $t_{N+1}:=\infty$. By Lemma \ref{lem:filterstab}, equation \eqref{eq:supWest}, this implies that for any $|\bm{y}|_\Gamma,|\bm{y}'|_\Gamma \le R$, we have
\begin{align*}
\sup_{t\in [0,T]}
W_1\left(\nu^{\bm{y}}_t,\nu^{\bm{y}'}_t\right) 
&=
\max_{k=0,\dots, N} W_1\left(\nu^{y_{1:k}}_t,\nu^{y_{1:k}'}_t\right) 
\\
&\le
C \max_{k=0,\dots, N}
|y_{1:k} - y_{1:k}'|_{\Gamma}
\\
&\le
C |\bm{y} - \bm{y}'|_{\Gamma},
\end{align*}
where we recall that the constant $C>0$ in Lemma \ref{lem:filterstab} depends on $R$, $\rho$, $N$, $B_\cS$ and $\Vert \bar{u} \Vert_{L^1(\mup)}$. This last estimate immediately implies
\begin{align*}
\int_0^T
W_1\left(
\nu^{\bm{y}}_t,
\nu^{\bm{y}'}_t
\right)
\, dt
&\le
C T |\bm{y} - \bm{y}'|_{\Gamma},
\end{align*}

\end{proof}

\subsection{Consistency results for Bayesian DA}

In the present subsection, we discuss the consistency of the approximate filtering problems based on the discretized solution operator $\cS^\Delta_t$, and the limiting filtering problem with solution operator $\cS^\dagger_t$ (cp. Theorem \ref{thm:filtering-consistency}). More precisely, we show that  if $\mup\in \cP_1(X)$ has finite second moments, if the noise distribution $\rho(y) \, dy$ satisfies \ref{ass:noise1}--\ref{ass:noise3} and if the observables $\cG_j: L^1([0,T];X) \to \R^d$ are Lipschitz continuous, then for any (approximate) solution operators $\cS^\dagger_t, \cS^\Delta_t: X \to X$ satisfying a uniform estimate
\begin{align} \label{eq:boundedness1}
\Vert \cS^\dagger_t(\bar{u}) \Vert_{X}, \Vert \cS^\Delta_t(\bar{u}) \Vert_{X} \le B_\cS \Vert \bar{u}\Vert_{X}, 
\end{align}
we have 
\begin{align}
\int_0^T
W_1
\left(
\nu^{\Delta,\bm{y}}_t
,
\nu^{\bm{y}}_t
\right)
\, dt
\le 
C 
\int_0^T 
\Vert \cS^\Delta_t(\bar{u}) - \cS^\dagger_t(\bar{u}) \Vert_{L^2(\mup)}
\, dt.
\end{align}

Before coming to the proof of this claim, we note that, under assumption \ref{ass:sol2} and for Lipschitz continuous observables $\cG_j: L^1([0,T];X)\to \R^d$ (cp. equation \ref{eq:Lj}), we have for $\cL_j^\Delta(\bar{u}) = \cG_j(\cS^\Delta(\bar{u}))$, $\cL_j^\dagger(\bar{u}) = \cG_j(\cS^\dagger(\bar{u}))$:
\begin{align} \label{eq:Llip}
\begin{aligned}
\Vert \L^\Delta_j(\bar{u}) \Vert_{L^2(\mup)},
\Vert \L^\dagger_j(\bar{u}) \Vert_{L^2(\mup)}
&\le
C
\left(
1+\Vert \bar{u} \Vert_{L^2(\mup)}
\right),
\\
\Vert \L^\Delta_j(\bar{u}) - \L^{\dagger}_j(\bar{u}) \Vert_{L^2(\mup)}
&\le
C\int_{t_{j-1}}^{t_j} \Vert \cS^\Delta_t(\bar{u}) - \cS^{\dagger}_t(\bar{u}) \Vert_{L^2(\mup)} \, dt,
\end{aligned}
\end{align}
for all $j=1,\dots, N$, where $C = C(\G, B_\cS, T)>0$ depends only on $T$, the (Lipschitz continuous) observables $\cG = (\cG_1,\dots, \cG_N)$, and the boundedness constant $B_\cS$ in \eqref{eq:boundedness1}.

\begin{proof}[Proof of Theorem \ref{thm:filtering-consistency}]
\label{pf:filtering-consistency}
By Proposition \ref{prop:da-bip} and \eqref{eq:dasol2}, we have
\[
\left.
\begin{aligned}
\nu^{\bm{y}}_t
&=
\cS^\dagger_{t,\#}
\mu^{y_{1:j}}, 
\\
\nu^{\Delta,\bm{y}}_t
&=
\cS^\Delta_{t,\#}
\mu^{\Delta,y_{1:j}}, 
\end{aligned}
\right\}
\qquad 
\forall \, t \in [t_{j},t_{j+1}).
\] 
Given $\Phi \in \Lip$, with $\Vert \Phi \Vert_{\Lip} \le 1$ and $\Phi(0) = 0$, we find
\begin{align*}
\int_{X} \Phi(u) \left[
\nu^{\Delta,\bm{y}}_t(du)
-
\nu^{\bm{y}}_t(du)
\right]
&=
\int_{X} \Phi(u) \left[
\cS^{\Delta}_{t,\#}\mu^{\Delta,y_{1:j}}(du)
-
\cS^\dagger_{t,\#}\mu^{y_{1:j}}(du)
\right]
\\
&=
\int_{X} \Phi(u) 
\left[
\cS^{\Delta}_{t,\#}\mu^{\Delta,y_{1:j}}(du)
-
\cS^\Delta_{t,\#}\mu^{y_{1:j}}(du)
\right]
\\
&\qquad
+
\int_{X} \Phi(u) 
\left[
\cS^{\Delta}_{t,\#}\mu^{y_{1:j}}(du)
-
\cS^\dagger_{t,\#}\mu^{y_{1:j}}(du)
\right]
\\
&=: (I) + (II)
.
\end{align*}
We can estimate the two last terms individually as follows: For the first term, we obtain
\begin{align*}
(I)
&=
\int_{X} \Phi(u) 
\left[
\cS^{\Delta}_{t,\#}\mu^{\Delta,y_{1:j}}(du)
-
\cS^\Delta_{t,\#}\mu^{y_{1:j}}(du)
\right]
\\
&=
\int_{X} \Phi(\cS^\Delta_t(\bar{u}))\left[
\frac{d\mu^{\Delta,y_{1:j}}}{d{\mup}}
-
\frac{d\mu^{y_{1:j}}}{d{\mup}}
\right] 
\, {\mup}(d\bar{u})
\\
&\le 
C\int_{X} \Vert \bar{u} \Vert_{X}
\left|
\frac{d\mu^{\Delta,y_{1:j}}}{d{\mup}}
-
\frac{d\mu^{y_{1:j}}}{d{\mup}}
\right| \, {\mup}(d\bar{u})
\\
&\le 
C\Vert \bar{u} \Vert_{L^2(\mup)}
\left\Vert
\frac{d\mu^{\Delta,y_{1:j}}}{d{\mup}}
-
\frac{d\mu^{y_{1:j}}}{d{\mup}}
\right\Vert_{L^2(\mup)} .
\end{align*}
The last term can be estimated using Lemma~\ref{lem:LinfLip2}, recalling that $\mu^{\Delta,y_{1:j}}$ is defined as the posterior with prior $\mup$ and given the measurements $\cL^\Delta_{1:j} = (\L^\Delta_1,\dots, \L^\Delta_j)$, with $
y_{1:j} = \cL^\Delta_{1:j} + \eta_{1:j}$ and $\eta_{1:j} = (\eta_1,\dots, \eta_j) \sim \rho(y_1) \, dy_1 \otimes \dots \otimes \rho(y_j) \, dy_j$. Lemma \ref{lem:LinfLip2} therefore yields
\begin{align*}
\left\Vert
\frac{d\mu^{\Delta,y_{1:j}}}{d{\mup}}
-
\frac{d\mu^{y_{1:j}}}{d{\mup}}
\right\Vert_{L^2(\mup)}
&\le 
C
\Vert \L^\Delta_{1:j}(\bar{u}) - \L^\dagger_{1:j}(\bar{u}) \Vert_{L^2(\mup)}
\\
&=
C
\left(
\sum_{\ell=1}^j
\Vert \L^\Delta_\ell(\bar{u}) - \L^\dagger_\ell(\bar{u}) \Vert_{L^2(\mup)}^2
\right)^{1/2},
\end{align*}
for some constant $C = C(R,T,N,\rho,\Vert \bar{u}\Vert_{L^2(\mup)}, \cG, T)>0$ depending on $R$, $T$, the noise distribution $\rho(y)$, the prior $\mup$ and on the observables $\cG = (\cG_1,\dots, \cG_N)$; here, we have used the fact that $y_{1:j}$ is fixed, and that $\Vert \L_\ell^\Delta(\bar{u})\Vert_{L^2(\mup)}, \Vert \L^\dagger_\ell(\bar{u}) \Vert_{L^2(\mup)} \le C (1+\Vert \bar{u} \Vert_{L^2(\mup)}) < \infty$ are bounded independently of $\Delta > 0$ for a constant $C = C(\cG,T)>0$ (cp. equation \eqref{eq:Llip}). The latter observation allows us to bound the additional exponential factor in Lemma \ref{lem:LinfLip2} uniformly in $\Delta$.
Continuing, we note that the observables are Lipschitz continuous by assumption \eqref{eq:Llip}; we have
\[
\Vert 
\L^\Delta_\ell(\bar{u})
-
\L^\dagger_\ell(\bar{u})
\Vert_{L^2(\mup)}
\le 
C
\int_{t_{\ell-1}}^{t_{\ell}}
\Vert 
\cS^\Delta_{t}(\bar{u}) 
-
\cS^\dagger_t(\bar{u})
\Vert_{L^2(\mup)}
\, dt,
\]
where $C = C(\G) > 0$. It follows that
\begin{align*}
(I)
\le
C
\left(
\sum_{\ell=1}^j
\left[
\int_{t_{\ell-1}}^{t_{\ell}}
\left\Vert \cS^\Delta_t(\bar{u}) - \cS^\dagger_t(\bar{u}) \right\Vert_{L^2({\mup})}
\, 
dt
\right]^2
\right)^{1/2}
.
\end{align*}
Denoting $F(t,\ell) := 1_{[t_{\ell-1},t_{\ell})}(t) \Vert \cS^\Delta_t(\bar{u}) - \cS_t^\dagger(\bar{u}) \Vert_{L^2(\mup)}$, we can estimate the last term as follows, using Minkowski's integral inequality:
\begin{align*}
C\left(
\sum_{\ell=1}^j
\left[
\int_{0}^{T}
F(t,\ell)
\, 
dt
\right]^2
\right)^{1/2}
\le
C\int_{0}^{T}
\left(
\sum_{\ell=1}^j
|F(t,\ell)|^2
\right)^{1/2}
\, 
dt.
\end{align*}
Finally, recalling that all $F(t,\ell)$, $\ell=1,\dots, j$, have \emph{disjoint} supports in $t$, we conclude that
\begin{align*}
(I) 
&\le
C\int_{0}^{T}
\left(
\sum_{\ell=1}^j
|F(t,\ell)|^2
\right)^{1/2}
\, 
dt
=
C
\sum_{\ell=1}^j
\int_{t_{\ell-1}}^{t_{\ell}}
|F(t,\ell)|
\, 
dt
\\
&\le
C\int_0^T
\Vert \cS^\Delta_t(\bar{u}) - \cS^\dagger_t(\bar{u}) \Vert_{L^2(\mup)}
\, dt.
\end{align*}

To estimate the second term, we note that
\begin{align*}
\int_{X} \Phi(u) \left[
\cS^{\Delta}_{t,\#}\mu^{y_{1:j}}(du)
-
\cS_{t,\#}^\dagger\mu^{y_{1:j}}(du)
\right]
&=
\int_{X} \left[\Phi(\cS^\Delta_t(\bar{u}))-\Phi(\cS^\dagger_t(\bar{u}))\right] \, \mu^{y_{1:j}}(d\bar{u})
\\
&\le
\int_{X} \left\Vert \cS^\Delta_t(\bar{u})-\cS^\dagger_t(\bar{u})\right\Vert_{X} \, \mu^{y_{1:j}}(d\bar{u})
\\
&\le
C\int_{X} \left\Vert \cS^\Delta_t(\bar{u})-\cS^\dagger_t(\bar{u})\right\Vert_{X} \, {\mup}(d\bar{u})
\\
&\le
C\left\Vert \cS^\Delta_t(\bar{u}) - \cS_t^\dagger(\bar{u}) \right\Vert_{L^2({\mup})}.
\end{align*}
Thus, employing the above estimates for $(I)$ and $(II)$, we conclude that for any $\Phi \in \Lip$, $\Vert \Phi\Vert_{\Lip}\le 1$, and for any $t\in [0,T]$, we have
\begin{align*}
\int_{X} \Phi(u) \left[
\nu^{\Delta,\bm{y}}_t(du)
-
\nu^{\bm{y}}_t(du)
\right]
&\le
C
\left\Vert
\cS^\Delta_t(\bar{u}) - \cS^\dagger_t(\bar{u})
\right\Vert_{L^2(\mup)}
\\
&\qquad
+
C
\int_{0}^{T}
\left\Vert
\cS^\Delta_t(\bar{u}) - \cS^\dagger_t(\bar{u})
\right\Vert_{L^2(\mup)}
\, dt.
\end{align*}
Taking the supremum over all such $\Phi$ on the left, and integrating over $t\in [0,T]$, it follows that
\begin{align*}
\int_0^T
W_1\left(
\nu^{\Delta,\bm{y}}_t
,
\nu^{\bm{y}}_t
\right)
\, dt
&\le
C
\int_0^T
\left\Vert
\cS^\Delta_t(\bar{u}) - \cS^\dagger_t(\bar{u})
\right\Vert_{L^2(\mup)}
\, 
dt
,
\end{align*}
where $C = C(R,T,N,\rho,\Vert \bar{u}\Vert_{L^2(\mup)}, \cG)>0$ is independent of $\Delta$.
\end{proof}

\subsection{Compactness results for Bayesian DA}
\label{sec:bda-compact}

In the present section, we will prove a compactness result, which applies in particular to the numerical approximations of the Navier-Stokes equations introduced in Section \ref{sec:sv}. We recall that, by Proposition \ref{prop:sv}, numerical solutions computed e.g. by suitable spectral schemes satisfy the following properties:
\begin{itemize}
\item[\namedlabel{ass:sv1p}{($\Delta$.1')}] \emph{Energy bound:} There exists $C>0$, independent of $\Delta>0$, such that
%\begin{align} \label{eq:sv1}
$\Vert \cS^\Delta_t(\bar{u}) \Vert_{L^2_x} \le \bar{B}_\cS \Vert \bar{u} \Vert_{L^2_x},$
%\end{align}
\item[\namedlabel{ass:sv2p}{($\Delta$.2)}] \emph{Coercivity:}
%\begin{align} \label{eq:sv2}
$\int_0^T \Vert \nabla u^\Delta(t) \Vert^2_{L^2_x} \, dt 
\le \nu^{-1} \Vert \bar{u} \Vert_{L^2_x}^2,
$
%\end{align}
\item[\namedlabel{ass:sv3p}{($\Delta$.3)}] \emph{Weak time-regularity:}
There exists $C = C(\Vert \bar{u} \Vert), L > 0$, such that 
%\begin{align} \label{eq:sv3}
$\Vert u^\Delta(t) - u^\Delta(s) \Vert_{H^{-L}_x}
\le 
C |t-s|.
$
%\end{align}
In particular, we have $u^\Delta \in \Lip([0,T];H^{-L}_x)$ uniformly in $\Delta >0$.
\end{itemize}
When re-stating the first property, we have slightly relaxed \ref{ass:sv1}, allowing for a uniformly bounded constant $C>0$ in \ref{ass:sv1p}.

 Our compactness result is motivated by the study of statistical solutions of the compressible and incompressible Euler equations in \cite{FLMW1,LMP1,LMP2}. There, it is shown that under a suitable average regularity condition, the sequence of discretized approximate solutions $\mu^\Delta_t := \cS^\Delta_{t,\#} {\mu}$ (push-forward by the discretized solution operator) is compact in $\P_1(L^2_x)$, provided that the following measure of average two-point correlations
\begin{align} \label{eq:structfun}
\S_2^T(\mu^\Delta_t;r) 
:=
\left(
\int_0^T
\int_{L^2_x} 
\S_2(u;r)^2
\, \mu^\Delta_t(du)
\, dt
\right)^{1/2},
\end{align}
are uniformly bounded as $\Delta \to 0$, where 
\begin{align}
\S_2(u;r)
:=
\left(
\int_D \fint_{B_r(0)} 
|u(x+h)-u(x)|^2
\, dh \, dx
\right)^{1/2},
\end{align}
measures the average of two-point correlations of $u$. The quantity $r \mapsto \S_2^T(\mu^\Delta_t;r)$ is referred to as the (time-integrated) structure function of $\mu^\Delta_t$. For simplicity, we will state the following results in the periodic setting with domain $D = \T^n \simeq [0,2\pi]^n$ the $2\pi$-periodic torus, and with $L^2_x := L^2(\T^n;\R^n)$ the space of square-integrable vector fields on $\T^n$. More precisely, we state the following proposition, which follows from \cite[Theorem 2.2]{LMP1}:
\begin{proposition}
\label{prop:cpct}
Let $\mu^\Delta_t$ be of the form $\mu^\Delta_t = \cS^{\Delta}_{t,\#}\mup$, where $\cS^\Delta_t: L^2_x\to L^2_x$ satisfies assumptions \ref{ass:sv1p},\ref{ass:sv3p}, and where $\mup \in \P_1(L^2_x)$ has bounded support, i.e. there exists $M>0$, such that 
$
\mup\left(\set{\bar{u}\in L^2_x}{\Vert \bar{u} \Vert_{L^2_x} \le M}\right) = 1.
$
If there exists a modulus of continuity $\phi(r)$ 
%(i.e., $\phi(r) > 0, ~\forall r$ and $\lim\limits_{r \rightarrow 0} \phi(r) = 0$)
,  such that
\[
\sup_{\Delta} \S_2^T(\mu^\Delta_t;r) \le \phi(r),
\quad
\forall\, r \ge 0,
\]
then $\mu^{\Delta}_t$ is compact in $L^1_t(\P)$.
\end{proposition}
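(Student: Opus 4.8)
The plan is to deduce the result from the compactness criterion \cite[Theorem 2.2]{LMP1} by verifying its hypotheses for the family $\mu^\Delta_t = \cS^\Delta_{t,\#}\mup$. Concretely, \cite[Theorem 2.2]{LMP1} yields compactness in $L^1_t(\P)$ once one establishes (i) a uniform bound on the support, and hence on the moments, of the measures in $L^2_x$; (ii) a uniform bound on the time-integrated structure functions $\S_2^T(\mu^\Delta_t;r)$ by a common modulus of continuity; and (iii) a uniform time-regularity in a weaker, negative Sobolev norm. Ingredients (i) and (iii) will be extracted from the pointwise-in-$\bar u$ assumptions \ref{ass:sv1p} and \ref{ass:sv3p} on the solution operator, while (ii) is precisely the standing hypothesis of the proposition.

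First I would establish the uniform support bound. Since $\mup$ is concentrated on $\set{\bar u\in L^2_x}{\Vert \bar u\Vert_{L^2_x}\le M}$ and $\cS^\Delta_t$ satisfies the energy bound \ref{ass:sv1p}, we have $\Vert \cS^\Delta_t(\bar u)\Vert_{L^2_x}\le \bar{B}_\cS M$ for $\mup$-a.e.\ $\bar u$ and all $t\in[0,T]$, $\Delta>0$. Pushing forward, every $\mu^\Delta_t$ is supported in the fixed ball $\set{u\in L^2_x}{\Vert u\Vert_{L^2_x}\le \bar{B}_\cS M}$. In particular the first moments $\int_{L^2_x}\Vert u\Vert_{L^2_x}\,\mu^\Delta_t(du)$ are bounded by $\bar{B}_\cS M$ uniformly in $t$ and $\Delta$, so that $\mu^\Delta_t\in L^1_t(\P)$ and the family is uniformly bounded (and uniformly integrable) in norm. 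Since balls in the infinite-dimensional space $L^2_x$ are bounded but not compact, the spatial compactness needed for tightness in $\P_1(L^2_x)$ must instead be supplied by the structure-function control of the next step.

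Next, hypothesis (ii) is supplied verbatim by the assumption $\sup_\Delta \S_2^T(\mu^\Delta_t;r)\le \phi(r)$ with $\phi$ a modulus of continuity. Its role is to provide the Kolmogorov--Riesz compactness mechanism in $L^2_x$: applying Chebyshev's inequality in the joint time-and-measure integral $dt\otimes \mu^\Delta_t$ on $[0,T]\times L^2_x$, the set on which $\S_2(u;r)$ exceeds a threshold has small measure, uniformly in $\Delta$; hence, up to an arbitrarily small loss of mass, the measures concentrate on sets of functions whose translates are uniformly equicontinuous, and such sets are precompact in $L^2_x$. Combined with the bounded support of the previous step, this yields the spatial tightness of $\mu^\Delta_t$ in $\P_1(L^2_x)$ required by \cite[Theorem 2.2]{LMP1}, in the time-averaged sense appropriate to the metric $d_T$.

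Finally, for the temporal regularity (iii), I would invoke the weak time-regularity \ref{ass:sv3p}, which gives $\Vert \cS^\Delta_t(\bar u)-\cS^\Delta_s(\bar u)\Vert_{H^{-L}_x}\le C|t-s|$ with $C$ uniform over $\Delta$ and over the bounded support of $\mup$, thereby furnishing a uniform modulus of continuity in time in the weaker norm $H^{-L}_x$. The main obstacle, and the genuine content encapsulated in \cite[Theorem 2.2]{LMP1}, is to combine this weak-in-time equicontinuity with the strong spatial compactness from (i)--(ii) so as to upgrade to precompactness in $d_T=\int_0^T W_1(\slot,\slot)\,dt$, where the Wasserstein distance is taken with respect to the strong $L^2_x$-topology. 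This is an Aubin--Lions--type interpolation carried out at the level of probability measures: strong spatial tightness together with $H^{-L}_x$-Lipschitz regularity in time pins down subsequential limits and rules out temporal oscillation. Having verified all three hypotheses, the conclusion that $\mu^\Delta_t$ is compact in $L^1_t(\P)$ follows directly from the cited theorem.
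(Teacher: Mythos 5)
Your proposal is correct and takes essentially the same route as the paper: the paper's own ``proof'' is simply the statement that the proposition follows from \cite[Theorem 2.2]{LMP1}, and your argument invokes that same theorem after verifying its hypotheses (uniform bounded support of the push-forwards via \ref{ass:sv1p}, weak time-regularity via \ref{ass:sv3p}, and the assumed structure-function bound). The extra detail you supply on transferring the pointwise assumptions on $\cS^\Delta_t$ to the measures $\mu^\Delta_t$ is exactly the routine verification the paper leaves implicit.
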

Numerical evidence for the uniform boundedness of these structure functions $\S^T_2(\mu^\Delta_t;r)$ has been presented for a variety of initial probability measures ${\mup} \in \cP_1(L^2_x)$ for the incompressible Euler equations (i.e. in the zero-viscosity limit of the Navier-Stokes equations) in \cite{LMP1,LMP2}, and in the context of hyperbolic conservation laws in \cite{FLMW1}. While these results of \cite{LMP1,LMP2,FLMW1} were mostly based on empirical observations, in the present case of the Navier-Stokes equations, we will \emph{rigorously prove} that the structure functions \eqref{eq:structfun} are uniformly bounded as $\Delta \to 0$ (cp. Lemma \ref{lem:coerc}, below). We will furthermore extend the compactness result summarized in Proposition \ref{prop:cpct} to the Bayesian filtering context, when  the underlying model is combined with additional measurements.

We formulate the numerically observed \cite{FLMW1,LMP1,LMP2} bounds on $\S^T_2(\mu^\Delta_t;r)\le \phi(r)$ abstractly as the following assumption:
\begin{assumption} \label{ass:pushforward}
The prior $\mup \in \cP_1(L^2_x)$ has bounded support, i.e. there exists $M>0$, such that 
\[
\mup\left(
\set{\bar{u}\in L^2_x}{\Vert \bar{u} \Vert_{L^2_x} \le M}
\right) = 1,
\]
and there exists a modulus of continuity $\phi(r)$, such that 
\begin{align} \label{eq:pushforward}
\S_2^T(\cS^\Delta_{t,\#}{\mup};r)
\le 
\phi(r), \quad \forall r > 0, \; t \in [0,T],
\end{align}
uniformly for all $\Delta > 0$. Here $\cS^\Delta_{t,\#}{\mup}$ denotes the push-forward measure of the prior $\mup$ by the discretized solution operator $\cS^\Delta_t$.
\end{assumption}

The next proposition shows that the structure function bound \eqref{eq:pushforward} is automatically satisfied for numerical schemes satisfying \ref{ass:sv2p}:
\begin{lemma}
\label{lem:coerc}
Let $\mup\in \P_1(L^2_x)$ be a prior for the incompressible Navier-Stokes equations \eqref{eq:NS}, such that $\int_{L^2_x} \Vert \bar{u} \Vert^2_{L^2_x} \, \mup(d\bar{u})< \infty$. Let $\cS^\Delta_t: L^2_x \to L^2_x$ denote the approximate solution operator obtained from the spectral scheme \eqref{eq:spectralvisc}. Then we have the following a priori structure function bound:
\[
\S_2^T(\cS^\Delta_{t,\#} \mup; r) 
\le 
\frac{r}{\sqrt{2\nu}} \left(\int_{L^2_x} \Vert \bar{u} \Vert^2_{L^2_x} \, \mup(d\bar{u})\right)^{1/2}.
\]
In particular, $\S_2^T(\cS^\Delta_{t,\#} \mup; r) \le Cr$ is uniformly bounded by a modulus of continuity as $\Delta \to 0$.
\end{lemma}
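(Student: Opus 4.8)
The plan is to reduce the claimed bound, via the push-forward structure of $\cS^\Delta_{t,\#}\mup$, to a deterministic inequality along a single trajectory $u^\Delta(t) = \cS^\Delta_t(\bar{u})$, which is then controlled by the energy dissipation of the scheme. By the definition of the push-forward measure and Tonelli's theorem (the integrand being nonnegative and jointly measurable by \ref{ass:sol1}), I would first rewrite
\[
\S_2^T(\cS^\Delta_{t,\#}\mup;r)^2 = \int_0^T\int_{L^2_x}\S_2(\cS^\Delta_t(\bar{u});r)^2\,\mup(d\bar{u})\,dt = \int_{L^2_x}\int_0^T\S_2(u^\Delta(t);r)^2\,dt\,\mup(d\bar{u}).
\]
Thus it suffices to bound $\S_2(u;r)^2$ pointwise by $r^2\Vert\nabla u\Vert_{L^2}^2$ for each fixed smooth $u$, and afterwards to integrate the resulting time-integral of $\Vert\nabla u^\Delta(t)\Vert_{L^2}^2$ against the prior.

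For the pointwise step I would use the standard difference-quotient (Poincaré-type) inequality. Since each $u^\Delta(t)$ is a trigonometric polynomial, hence smooth and periodic, the fundamental theorem of calculus gives $u(x+h)-u(x)=\int_0^1\nabla u(x+sh)\cdot h\,ds$, and the Cauchy--Schwarz inequality in the variable $s$ yields $|u(x+h)-u(x)|^2\le |h|^2\int_0^1|\nabla u(x+sh)|^2\,ds$. Integrating in $x$ over $\T^n$ and using translation invariance of Lebesgue measure gives $\int_{\T^n}|u(x+h)-u(x)|^2\,dx\le |h|^2\Vert\nabla u\Vert_{L^2}^2$. Averaging over $h\in B_r(0)$ and bounding $|h|\le r$ then produces
\[
\S_2(u;r)^2 = \fint_{B_r(0)}\int_{\T^n}|u(x+h)-u(x)|^2\,dx\,dh \le r^2\Vert\nabla u\Vert_{L^2}^2.
\]

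Finally I would combine this with the energy dissipation of the spectral scheme. Here I would extract the dissipation from the \emph{exact} energy identity rather than from the stated bound: testing \eqref{eq:spectralvisc} against $u^\Delta$ and using that the projected convective term and the pressure gradient are $L^2$-orthogonal to the divergence-free field $u^\Delta$ gives $\tfrac12\tfrac{d}{dt}\Vert u^\Delta\Vert_{L^2}^2 = -\nu\Vert\nabla u^\Delta\Vert_{L^2}^2$, whence $\int_0^T\Vert\nabla u^\Delta(t)\Vert_{L^2}^2\,dt\le \tfrac{1}{2\nu}\Vert\bar{u}\Vert_{L^2}^2$. Substituting into the identity above yields
\[
\S_2^T(\cS^\Delta_{t,\#}\mup;r)^2 \le r^2\int_{L^2_x}\int_0^T\Vert\nabla u^\Delta(t)\Vert_{L^2}^2\,dt\,\mup(d\bar{u}) \le \frac{r^2}{2\nu}\int_{L^2_x}\Vert\bar{u}\Vert_{L^2_x}^2\,\mup(d\bar{u}),
\]
and taking square roots gives the claim. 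The argument is essentially routine; the only points requiring care are the interchange of integrations (justified by Tonelli together with the measurability in \ref{ass:sol1}) and the factor $1/2$ in the final constant, which is precisely why I would derive the dissipation from the sharp energy identity rather than from the slightly lossy version $\int_0^T\Vert\nabla u^\Delta\Vert_{L^2}^2\,dt\le\nu^{-1}\Vert\bar{u}\Vert_{L^2}^2$ recorded in \ref{ass:sv2p}.
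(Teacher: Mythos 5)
Your proof is correct and follows the same route as the paper's: both rewrite $\S_2^T(\cS^\Delta_{t,\#}\mup;r)^2$ as an iterated integral over $\mup$ and $t$, both reduce to the pointwise estimate $\S_2(u;r)^2 \le r^2 \Vert \nabla u \Vert_{L^2_x}^2$ (which the paper simply asserts and you prove via the fundamental theorem of calculus and Cauchy--Schwarz — a welcome bit of extra detail), and both conclude by inserting the viscous dissipation bound and integrating against the prior. The one genuine difference is where the dissipation bound comes from, and here your version is the more careful one. The paper's proof invokes \ref{ass:sv2} but then uses
\[
\int_0^T \Vert \nabla u^\Delta(t) \Vert_{L^2_x}^2 \, dt \le \frac{1}{2\nu} \Vert \bar{u} \Vert_{L^2_x}^2,
\]
whereas \ref{ass:sv2} as stated only provides the constant $\nu^{-1}$, which would yield the weaker conclusion $\S_2^T(\cS^\Delta_{t,\#}\mup;r) \le \tfrac{r}{\sqrt{\nu}}\left(\int_{L^2_x} \Vert \bar{u}\Vert_{L^2_x}^2\,\mup(d\bar{u})\right)^{1/2}$. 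You spotted this mismatch and instead derived the sharp factor $\tfrac{1}{2\nu}$ directly from the energy identity of the Galerkin scheme \eqref{eq:spectralvisc} (testing against $u^\Delta$, using $\P_N u^\Delta = u^\Delta$ together with the $L^2$-orthogonality of the projected convective term and the pressure gradient to the divergence-free field, and $\Vert \P_N \bar{u}\Vert_{L^2_x} \le \Vert \bar{u}\Vert_{L^2_x}$), which is exactly what is needed for the constant $\tfrac{r}{\sqrt{2\nu}}$ claimed in the lemma. In short: same approach, but your proof closes a small constant-tracking inconsistency that the paper's own proof glosses over; alternatively, the lemma as stated remains true under the abstract hypothesis \ref{ass:sv2} alone if one is willing to replace $\sqrt{2\nu}$ by $\sqrt{\nu}$, which is all that is needed for the qualitative conclusion $\S_2^T \le Cr$.
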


\begin{proof}
By definition, we have
\[
\S^T_2(\cS^\Delta_{t,\#}\mup; r)^2
=
\int_{L^2_x} \left( 
\int_0^T
\S_2(\cS^\Delta_t(\bar{u});r)^2 \, dt 
\right) \, \mup(d\bar{u}),
\]
where, setting $u^\Delta = \cS^\Delta_t(\bar{u})$, 
\begin{align*}
\S_2(u^\Delta;r)^2 
&= \int_{\T^d} \fint_{B_r(0)} |u^\Delta(x+h)-u^\Delta(x)|^2 \, dh \, dx \\
&= \fint_{B_r(0)} \Vert u^\Delta(\slot + h) - u^\Delta(\slot) \Vert_{L^2_x}^2 \, dh \\
&\le \fint_{B_r(0)} \Vert \nabla u^\Delta \Vert_{L^2_x}^2 \, |h|^2 \, dh \\
&\le \Vert \nabla u^\Delta \Vert_{L^2_x}^2 r^2.
\end{align*}
By \ref{ass:sv2}, with $u^\Delta(t) = \cS^\Delta_t(\bar{u})$, we have 
\[
\int_0^T \Vert \nabla u^\Delta \Vert_{L^2_x}^2 \, dt
\le 
\frac{1}{2\nu} \Vert \bar{u} \Vert_{L^2_x}^2.
\]
Thus, from the above estimates, we conclude that 
\[
\int_0^T \S_2(\cS^\Delta_t(\bar{u}); r)^2 \, dt
= \int_0^T \S_2(u^\Delta; r)^2 \, dt
\le r^2 \int_0^T \Vert \nabla u^\Delta \Vert_{L^2_x}^2 \, dt
\le \frac{r^2}{2\nu} \Vert \bar{u} \Vert_{L^2_x}^2.
\]
Integration against $\mup(d\bar{u})$ yields
\begin{align*}
\S_2^T(\cS^\Delta_{t,\#}\mup;r)^2
\le 
\frac{r^2}{2\nu} \int_{L^2_x} \Vert \bar{u} \Vert_{L^2_x}^2 \, \mup(d\bar{u}),
\end{align*}
as claimed.
\end{proof}

Conditional on Assumption \ref{ass:pushforward}, we can now prove a compactness result for the filtering problem.

\begin{lemma} \label{lem:filter-compactness}
Let $\cS^\Delta_t: L^2_x \to L^2_x$ be a sequence of approximate solution operators satisfying \ref{ass:sv1p}--\ref{ass:sv3p}. Let $\nu^{\Delta,\bm{y}}_t$ be the solution of the associated filtering problem with prior $\mup \in \P_1(L^2_x)$ having bounded support, and measurements $\bm{y} = (y_1, \dots, y_N)$. If assumption \ref{ass:pushforward} holds, then $\nu^{\Delta,\bm{y}}_t$ is a compact sequence in $L^1_t(\P)$, as $\Delta \to 0$.
\end{lemma}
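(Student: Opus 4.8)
The plan is to exploit the push-forward representation of the filtering distribution and to reduce, on each subinterval between consecutive measurement times, to the compactness criterion of Proposition~\ref{prop:cpct}. By Proposition~\ref{prop:da-bip} (applied to $\cS^\Delta_t$) together with the piecewise definition of the filtering distribution (cp.\ \eqref{eq:dasol2}), $\nu^{\Delta,\bm{y}}_t$ is, on each subinterval, a push-forward of a \emph{fixed} (though $\Delta$-dependent) measure:
\[
\nu^{\Delta,\bm{y}}_t = \cS^\Delta_{t,\#}\mu^{\Delta,y_{1:(j-1)}},
\qquad t \in [t_{j-1},t_j), \quad j=1,\dots,N+1,
\]
where we set $t_{N+1}:=T$ and $\mu^{\Delta,\emptyset} := \mup$. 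First I would fix $\bm{y}$ and establish two uniform-in-$\Delta$ bounds on the priors $\mu^{\Delta,y_{1:(j-1)}}$ appearing here; then I would transfer the structure-function bound of Assumption~\ref{ass:pushforward} to these modified priors; and finally I would apply Proposition~\ref{prop:cpct} on each subinterval and combine.

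For the uniform bounds, since $\mu^{\Delta,y_{1:(j-1)}} \ll \mup$ and $\mup$ has bounded support \eqref{eq:bdsupp}, each $\mu^{\Delta,y_{1:(j-1)}}$ is supported in $\{\Vert \bar{u}\Vert_{L^2_x}\le M\}$ as well. Moreover, by Lemma~\ref{lem:Zbound}, specifically the density estimate \eqref{eq:dmu-estimate2} applied to the combined observable $\cL^\Delta_{1:(j-1)}$ with noise density $\tilde\rho=\prod\rho$ (as in the proof of Lemma~\ref{lem:filterstab}), together with the bound $\Vert \cL^\Delta_{1:(j-1)}\Vert_{L^2(\mup)} \le C(1+M)$ valid for priors of bounded support (cp.\ \eqref{eq:Llip}), the Radon--Nikodym derivatives are uniformly bounded, with $K = K(\bm{y},\rho,N,M) < \infty$ for each fixed $\bm{y}$,
\[
\sup_{\Delta>0}\left\Vert \frac{d\mu^{\Delta,y_{1:(j-1)}}}{d\mup} \right\Vert_{L^\infty(\mup)} \le K.
\]

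Next I would transfer the structure-function bound. Writing the structure function \eqref{eq:structfun} of the push-forward as an integral over the prior and using the density bound, I obtain on each subinterval
\[
\S_2^{[t_{j-1},t_j]}(\cS^\Delta_{t,\#}\mu^{\Delta,y_{1:(j-1)}};r)^2
= \int_{t_{j-1}}^{t_j}\!\!\int_{L^2_x} \S_2(\cS^\Delta_t(\bar{u});r)^2 \, \mu^{\Delta,y_{1:(j-1)}}(d\bar{u}) \, dt
\le K\, \S_2^T(\cS^\Delta_{t,\#}\mup;r)^2 \le K\phi(r)^2,
\]
where the last step uses Assumption~\ref{ass:pushforward}, eq.\ \eqref{eq:pushforward}. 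Hence $\sqrt{K}\,\phi(r)$ is a uniform modulus of continuity for these push-forwards on each subinterval. Together with the uniform support bound $\Vert\cS^\Delta_t(\bar{u})\Vert_{L^2_x}\le \bar{B}_\cS M$ (from \ref{ass:sv1p} and bounded support of the prior) and the weak time-regularity \ref{ass:sv3p}, these are exactly the hypotheses needed to invoke Proposition~\ref{prop:cpct} on the subinterval $[t_{j-1},t_j]$, yielding that $\nu^{\Delta,\bm{y}}_t$ is a compact sequence in $L^1([t_{j-1},t_j];\P)$ for each $j$. A successive extraction of subsequences over the finitely many subintervals $j=1,\dots,N+1$ then produces a single subsequence $\Delta_k\to 0$ converging in each $L^1([t_{j-1},t_j];\P)$; since $d_T(\slot,\slot)=\sum_j \int_{t_{j-1}}^{t_j} W_1(\slot,\slot)\,dt$, this gives convergence in $L^1_t(\P)$, proving the claimed compactness.

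The main obstacle is that Proposition~\ref{prop:cpct} is stated for push-forwards of a \emph{single fixed} prior $\mup$, whereas here the prior $\mu^{\Delta,y_{1:(j-1)}}$ depends on $\Delta$, and, worse, $t\mapsto\nu^{\Delta,\bm{y}}_t$ is only \emph{piecewise} a push-forward, jumping at the measurement times $t_1,\dots,t_N$. This prevents a direct global application on $[0,T]$. The resolution is twofold: one works on each subinterval, where $\nu^{\Delta,\bm{y}}_t$ is a genuine push-forward of a fixed measure under $\cS^\Delta_t$, so that the weak time-regularity \ref{ass:sv3p} of the scheme is available; and one observes that the proof of Proposition~\ref{prop:cpct} enters the prior only through the uniform bounds on support, structure function, and time-regularity established above, none of which require the prior to be independent of $\Delta$. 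The finitely many discontinuities at the fixed times $t_j$ are harmless for compactness in the integrated metric $L^1_t(\P)$.
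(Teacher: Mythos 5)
Your proof is correct and follows essentially the same route as the paper's: uniform boundedness of the Radon--Nikodym derivatives $d\mu^{\Delta,y_{1:(j-1)}}/d\mup$ via Lemma \ref{lem:Zbound} and \eqref{eq:Llip}, transfer of the structure-function bound of Assumption \ref{ass:pushforward} through that density bound, and conclusion via Proposition \ref{prop:cpct}. The only difference is organizational: the paper sums the structure-function estimate over all subintervals and invokes Proposition \ref{prop:cpct} once globally for $\nu^{\Delta,\bm{y}}_t$, whereas you apply it per subinterval and extract a common subsequence -- your explicit treatment of the $\Delta$-dependent, piecewise priors is, if anything, slightly more careful than the paper's.
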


\begin{proof}
We observe that the structure function can be written as
\begin{align*}
\S^T_2
\left(
\nu^{\Delta,\bm{y}}_t;r
\right)^2
&=
\int_0^T \int_{L^2_x} 
\S_2(u;r)^2 
\, \nu^{\Delta,\bm{y}}_t(du) \, dt
\\
&=
\sum_{k=1}^N
\int_{t_{k-1}}^{t_k} \int_{L^2_x} 
\S_2(u;r)^2 
\, \nu^{\Delta,\bm{y}}_t(du) \, dt
\\
&=
\sum_{k=1}^N
\int_{t_{k-1}}^{t_k} \int_{L^2_x} 
\S_2(u;r)^2 
\, \left[\cS^\Delta_{t,\#}\mu^{\Delta,y_{1:(k-1)}}\right](du) \, dt
\\
&=
\sum_{k=1}^N
\int_{t_{k-1}}^{t_k} \int_{L^2_x} 
\S_2(\cS^\Delta_t(\bar{u});r)^2 
\, \mu^{\Delta,y_{1:(k-1)}}(d\bar{u}) \, dt
.
\end{align*}
We recall that on the last line, $\mu^{\Delta,y_{1:(k-1)}}$ is the solution of the BIP \eqref{eq:bipsol}. Using the uniform boundedness result for such BIP, Lemma \ref{lem:Zbound}, we conclude that
\begin{align*}
\frac{d\mu^{\Delta,y_{1:(k-1)}}}{d\mup}
&\le 
C\exp
\left(
\sum_{j=1}^{k-1}
|y_{j}|_{\Gamma}^2 + 
\sum_{j=1}^{k-1}\Vert \L^\Delta_{j}(\bar{u}) \Vert_{L^2(\mup)}^2
\right)
\\
&\le 
C\exp
\left(
|\bm{y}|_{\Gamma}^2 + \sum_{j=1}^N \Vert \L^\Delta_{j}(\bar{u}) \Vert_{L^2(\mup)}^2
\right).
\end{align*}
By our boundedness assumption \ref{ass:sv1p}, and the upper bound on observables (cp. equation~\eqref{eq:Llip}), it follows that there exists a constant $C = C(\Lip(\cG_j),B)>0$, such that
\begin{align*}
\Vert \L^\Delta_{j} (\bar{u})\Vert_{L^2(\mup)}
=
C\left( 
1 + 
\Vert 
\bar{u}
\Vert_{L^2(\mup)}
\right)
< \infty,
\end{align*}
is uniformly bounded. Since $\bm{y}$ is fixed, we conclude that there exists a constant $C> 0$, such that 
\[
\frac{d\mu^{\Delta,y_{1:(k-1)}}}{d\mup} \le C,
\]
and hence 
\begin{align*}
\S_2^T
\left(
\nu^{\Delta,\bm{y}}_t;r
\right)^2
&=
\sum_{k=1}^N
\int_{t_{k-1}}^{t_k} \int_{L^2_x} 
\S_2(\cS^\Delta_t(\bar{u});r)^2 
\, \mu^{\Delta,y_{1:(k-1)}}(d\bar{u}) \, dt
\\
&\le 
\sum_{k=1}^N
\int_{t_{k-1}}^{t_k} \int_{L^2_x}
\S_2(\cS^\Delta_t(\bar{u});r)^2 
\,C \mup(d\bar{u}) \, dt
\\
&=
C 
\sum_{k=1}^N
\int_{t_{k-1}}^{t_k} \int_{L^2_x} 
\S_2(u;r)^2 
\, \left[\cS^\Delta_{t,\#}\mup\right](du) \, dt
\\
&=
C \int_0^T \int_{L^2_x} \S_2(u;r)^2 \, \left[\cS^\Delta_{t,\#}\mup\right](du) \, dt
\\
&=
C\S_2^T
\left(\cS^\Delta_{t,\#}\mup;r\right)^2
\\
&\le C\phi(r)^2.
\end{align*}
The last estimate follows from assumption \ref{ass:pushforward}. Thus, $\S_2^T(\nu^{\Delta,\bm{y}}_t;r)\le \sqrt{C}\phi(r)$ is uniformly bounded by a modulus of continuity, implying compactness in $L^1_t(\P)$, by Proposition \ref{prop:cpct}.
\end{proof}
 
Combining the uniform stability result of Theorem \ref{thm:filtering-wp} (applied to the numerical approximations $\cS^\Delta_t$) with  Lemma \ref{lem:filter-compactness} and the a priori structure function estimate of Lemma \ref{lem:coerc}, we can now prove Theorem \ref{thm:filtering} on the compactness of the approximate filtering distributions for the Navier-Stokes equations.

\begin{proof}[Proof of Theorem \ref{thm:filtering}]
\label{pf:filtering}
We first note that by \ref{ass:sv1p}, there exists a constant $\bar{B}_\cS>0$, such that \emph{uniformly in $\Delta>0$}, we have $\Vert \cS^\Delta_t(\bar{u}) \Vert_{L^2_x} \le \bar{B}_\cS \Vert \bar{u} \Vert_{L^2_x}$, for all $\bar{u}\in L^2_x$. By Theorem \ref{thm:filtering-wp}, applied to the forward operator $\cS^\Delta_t$, the mapping
\[
\R^{d\times N}
\ni
\bm{y}
\mapsto 
\nu^{\Delta,\bm{y}}_t 
\in L^1_t(\P),
\]
is uniformly bounded on any compact subsets $K\subset \R^{d\times N}$ and uniformly equicontinuous on $K$; indeed, there exists $R>0$, such that any $\bm{y}\in K$ satisfies $|\bm{y}|_\Gamma\le R$. By Theorem \ref{thm:filtering-wp}, there exists a constant $C = C(R,\rho,N,\bar{B}_\cS, \Vert \bar{u} \Vert_{L^1(\mup)})>0$, independent of $\Delta > 0$, such that for any $t\in [0,T]$, we have
\begin{align} \label{eq:W1000}
W_1(\nu^{\Delta,\bm{y}}_t, \nu^{\Delta,\bm{y}'}_t)
\le 
C |\bm{y} - \bm{y}'|_\Gamma,
\quad
\forall |\bm{y}|_\Gamma, |\bm{y}'|_\Gamma \le R.
\end{align}
We note that in the present case, under the assumptions of Theorem \ref{thm:filtering}, we actually have $\bar{B}_\cS = 1$, and $\Vert \bar{u} \Vert_{L^1(\mup)} \le M$. Hence the constant $C = C(R,\rho,N,M)$ depends only on $R$, the noise distribution $\rho$, the number of measurements $N$ and on the upper bound on the support $M$. An immediate consequence of \eqref{eq:W1000} is that 
\[
\int_0^T W_1(\nu^{\Delta,\bm{y}}_t, \nu^{\Delta,\bm{y}'}_t) \, dt \le CT |\bm{y} - \bm{y}'|_\Gamma, \quad \forall \, |\bm{y}|_\Gamma, |\bm{y}'|_\Gamma \le R.
\]
Furthermore, by Lemma \ref{lem:filter-compactness}, the sets
\[
\set{
\nu^{\Delta,\bm{y}}_t
}{
\Delta > 0
}
\subset
L^1_t(\P),
\]
are pre-compact for any fixed $\bm{y}\in \R^{d\times N}$ (pointwise compactness). By the Arzel\'a-Ascoli theorem \ref{thm:arzelaascoli}, the claimed compactness result follows.
\end{proof}

%\begin{remark}
%In practice, a very popular choice of priors are Gaussian priors $\mup \sim \Normal(m,\Gamma)$ on function spaces, i.e. priors $\mup$ such that each finite-dimensional projection is Gaussian. We point out in passing that Theorems \ref{thm:filtering-wp}, \ref{thm:filtering-consistency} and \ref{thm:filtering} on the stability and compactness properties of approximate posteriors apply in particular when the prior is Gaussian.
%\end{remark}

%As a consequence of results of the previous sections and the interpretation of BIP as a special case of Bayesian data assimilation (cp. Remark \ref{rmk:bip-as-da}), we can prove the corresponding compactness for approximations to the Bayesian inverse problem, stated in Theorem \ref{thm:bipcompact} on page \pageref{thm:bipcompact}:
%
%\begin{proof}[Proof of Theorem \ref{thm:bipcompact}]
%{\color{red} [fill in the proof!]}
%\end{proof}

We finally would like to emphasize that while we have chosen the incompressible Navier-Stokes equations as our main prototypical example of ill-posed problems arising in fluid mechanics, the results of the present section apply to more general models, as will be briefly indicated next.

\subsection{Comment on related models: Hyperbolic systems of conservation laws}

The results of this work apply, for example, to the numerical approximation of Bayesian inverse problems for hyperbolic systems of conservation laws. Again, we take as our domain $D = \T^n = [0,2\pi]^n$ with periodic boundary conditions.  We recall that a system of conservation laws is a PDE of the form 
\begin{align} \label{eq:hyp}
\partial_t u^i + \sum_{j=1}^n \partial_j \left( F^{ij}(u)\right) &= 0,
\end{align}
describing the temporal evolution of $m$ conserved quantities $u^1, \dots, u^m: D \times [0,T] \to \R$, and $F^{ij}: \R^m \to \R$ are the fluxes. It is convenient to write the system \eqref{eq:hyp} in the succinct form 
\[
\partial_t u + \div(F(u)) = 0,
\]
where $u=(u^1, \dots, u^m): D \times [0,T] \to \R^m$ and $F = (F^{ij}): \R^m \to \R^{m\times n}$. 
The system of conservation laws \eqref{eq:hyp} is called hyperbolic, provided that the Jacobian $D_u(F \cdot n)$ possesses real eigenvalues for all unit vectors $n\in \R^n$ with $|n|=1$. 
A great variety of systems in continuum mechanics can be formulated as hyperbolic systems of conservation laws, including the compressible Euler equations of gas dynamics, the shallow water equations of oceanography, the Magneto-Hydro-Dynamics (MHD) equations of plasma physics, and the equations of nonlinear elastodynamics \cite{Dafermos2005}.

As it is well-known, even in the special case of a scalar conservation law ($m =1$), weak solutions to \eqref{eq:hyp} are not necessarily unique. It is therefore necessary to augment hyperbolic conservation laws \eqref{eq:hyp} with additional entropy, or admissibility conditions. These entropy conditions are based on the existence of an entropy/entropy-flux pair $(\eta,q)$ consisting of a convex function $\eta: \R^m \to \R$ and a flux $q: \R^m \to \R^n$, such that 
\[
D_u q = D_u \eta \cdot D_u F.
\]
Here, $D_u q$, $D_u \eta$ denote the Jacobian matrices of $q(u)$ and $\eta(u)$. A weak solution $u$ of \eqref{eq:hyp} is called an \emph{entropy} weak solution, provided that, in addition to \eqref{eq:hyp}, also
\begin{align} \label{eq:entropy}
\partial_t \eta(u) + \div(q(u)) \le 0,
\end{align}
holds in the sense of distributions.

In the following we will restrict our attention to hyperbolic systems of conservation laws for which 
\[
\Vert D^2 F\Vert_{L^\infty} < \infty,
\]
and which admit a coercive, smooth flux function $\eta(u)\in C^2(\R^m)$, in the sense that there exist constants $c,C>0$, such that
\[
c 
\le
\left(
D^2 \eta(u) v, v\right)
\le
C, 
\quad 
\forall u, v\in \R^m, \; \text{with }|v|=1.
\]
Note that in this case, the entropy admissibility condition \eqref{eq:entropy} implies, upon integration over $D$, an a priori bound of the form
\[
\Vert u(t) \Vert_{L^2} \le B_\cS \Vert \overline{u} \Vert_{L^2},
\]
for any admissible weak solution $u$ with initial data $u(t=0) = \overline{u}$.

\subsubsection{Numerical methods}
In the context of systems of conservation laws, a popular method of choice are finite volume and finite difference methods, as e.g. employed in the numerical experiments for statistical solutions of \cite{FLMW1}. We briefly review the form of these numerical schemes, following \cite[Section 4.1]{FLMW1}. For a more complete review, we refer to e.g. \cite{GRbook,Leveque}.

The computational spatial domain is discretized by a collection of cells 
\[
\{
(x^1_{i^1-1/2},x^1_{i^1+1/2}) 
\times \dots \times 
(x^n_{i^n-1/2},x^n_{i^n+1/2}) 
\}_{(i^1, \dots, i^n)},
\]
with corresponding cell midpoints
\[
x_{i^1,\dots, i^n}
=
\left(
\frac{x^1_{i^1+1/2}+x^1_{i^1-1/2}}{2},
\dots,
\frac{x^n_{i^n+1/2}+x^n_{i^n-1/2}}{2},
\right).
\]
We assume that the mesh is equidistant, i.e. for some $\Delta > 0$ we have 
\[
x^k_{i^k+1/2} - x^k_{i^k-1/2} \equiv \Delta, \quad \forall \, k=1,\dots,n.
\]
For $\i = (i^1,\dots, i^n)$, we denote the averaged value in the cell at time $t\ge 0$ by $u^\Delta_{\i}(t) = u^\Delta_{(i^1,\dots, i^n)}(t)$. We consider the following semi-discrete scheme
\begin{align} \label{eq:FVscheme}
\frac{d}{dt} u^\Delta_{i^1,\dots,i^n}(t)
+
\sum_{k=1}^n
\frac{1}{\Delta}
&\left(
F^{k,\Delta}(u^\Delta_{\i-(q-1)\e_k}(t),\dots, u^{\Delta}_{\i+q\e_k}(t))
\right.
\\
&
\left.
-
F^{k,\Delta}(u^\Delta_{\i-q\e_k}(t),\dots, u^{\Delta}_{\i+(q-1)\e_k}(t))
\right)
=0,
\end{align}
and $u^\Delta_{i^1,\dots,i^n}(0) = \overline{u}(x_{i^1,\dots, i^n})$. Here, $\e_1,\dots, \e_n$ are the canonical unit vectors in $\R^n$. $F^{k,\Delta}$ denotes the numerical flux function in direction $k=1,\dots, n$, and $\overline{u}_\i = \fint_{C_\i} u_0(x) \, dx$ is the average of the initial data over the $\i$-th cell.

As in \cite{FLMW1}, we make the following assumptions on the discretization \eqref{eq:FVscheme}:

\begin{assumption} \label{ass:FV}
We assume that the finite volume scheme \eqref{eq:FVscheme} is consistent in the sense that there exists a constant $C>0$ such that for $k=1,\dots, d$,
\[
|F^{k,\Delta}(u^\Delta_{\i-(q-1)\e_k},
\dots, 
u^{\Delta}_{\i+q\e_k})
-
f^k(u^\Delta_\i)
|
\le 
C \sum_{j=-q+1}^q
|u^\Delta_\i(t) - u^\Delta_{\i+j\e_k}(t)|
\]
and the discretized solutions satisfy
\begin{enumerate}
\item $L^2$ bound: There exists $C>0$ such that
\[
\Delta^d \sum_{\i} |u^\Delta_\i(t)|^2
\le 
C \Delta^d \sum_{\i} |\overline{u}_\i|^2,
\]
\item weak BV bound: There exists $s\ge 2$, such that 
\[
\Delta^d \int_0^T \sum_{k=1}^d
\sum_{\i} |u^\Delta_{\i+\e_k}(t) - u^\Delta_\i(t)|^s \, dt \le C\Delta,
\]
with the constant $C = C(\Vert \overline{u} \Vert_{L^2_x})$ depending only on the $L^2$-norm of the initial data. 
\end{enumerate}
\end{assumption}

\begin{remark}
It is not difficult to see that, under Assumption \ref{ass:FV}, the discrete solution operator $S^\Delta_t: L^2_x \to L^2_x$, which is obtained by locally constant reconstruction (or suitable higher-order variants),
\[
S^\Delta_t(\overline{u})
:=
\sum_{\i} u^\Delta_{\i}(t) 1_{C_\i},
\]
satisfies the assumptions \ref{ass:sv1p} and \ref{ass:sv3p} of Section~\ref{sec:bda-compact}. Furthermore, as pointed out in \cite[Remark 4.2]{FLMW1}, many examples of finite volume/difference schemes can be shown to satisfy Assumption \ref{ass:FV}. Examples include the so-called entropy stable Lax-Wendroff schemes and the TeCNO schemes of \cite{Fjordholm2012}.
\end{remark}

Based on the results of sections~\ref{sec:BIP} and~\ref{sec:bda-compact}, we immediately obtain:

\begin{result}
Assume that the FV scheme \eqref{eq:FVscheme} satisfies Assumption~\ref{ass:FV}. If $\mup \in \P_1(L^2_x)$, then the posteriors $\mu^{\Delta,y}$ of the BIP \eqref{eq:posterior} for the hyperbolic conservation law are uniformly stable in $y$, for any $\Delta > 0$; i.e., for any $R>0$ there exists a constant $C>0$, independent of $\Delta$, such that 
\[
W_1(\mu^{\Delta,y},\mu^{\Delta,y'}) \le C |y-y'|_\Gamma,
\quad \forall |y|_\Gamma,|y'|_\Gamma\le R.
\]
Furthermore, the posteriors $\mu^{\Delta,y}$ form a compact sequence in $\P_1(L^2_x)$.
\end{result}

This result complements recent work in \cite{mishra2021wellposedness}, where (uncondintional) convergence was shown for the case of scalar conservation laws ($m=1$). For the filtering problem, we have:

\begin{result}
Assume that the FV scheme \eqref{eq:FVscheme} satisfies Assumption~\ref{ass:FV}. Then the approximate solutions $\nu^{\Delta,\bm{y}}_t$ of the filtering problem computed by the FV scheme  are uniformly stable with respect to the measurements $\bm{y}$, in the sense of \eqref{eq:filtering-stability}, for any $\Delta > 0$. In addition, if the prior $\mup \in \P_1(L^2_x)$ satisfies Assumption~\ref{ass:pushforward}, then the posteriors $\nu^{\Delta,\bm{y}}_t$ form a compact sequence in $L^1_t(\P)$.
\end{result}

\begin{remark}
The validity of Assumption~\ref{ass:pushforward} has been investigated for a diverse set of initial priors in \cite{FLMW1}. The numerical evidence, presented in \cite{FLMW1} strongly suggest that it is fulfilled for the cases considered there, and we conjecture that the structure functions \eqref{eq:pushforward} are uniformly bounded for a wide range of priors of practical relevance.
\end{remark}

\begin{remark}
The current section has been formulated for hyperbolic systems of convergence laws with a strictly convex entropy. The main reason for this restriction is that the results of Section~\ref{sec:bda-compact}, and the compactness proof of \cite{LMP1} are based on the $L^2$-framework that is natural in the context of the incompressible Euler equations. However, there should be no essential difficulty in extending these results to $L^p$-based spaces for $p\ne 2$.
\end{remark}

\section{Discussion}
\label{sec:discussion}

The Bayesian framework has been well-established as a suitable formulation of inverse problems
%and data assimilation  
arising in the context of PDEs \eqref{eq:pde} \cite{Stuart2010}. The well-posedness of the Bayesian inverse problem has been demonstrated under very mild assumptions on the well-posedness of the \emph{forward problem} for the underlying PDE, requiring essentially only the existence and uniqueness of solutions defined on an infinite-dimensional Banach space \cite{Latz2020,Sprungk2020}. Corresponding well-posedness results for data assimilation have focused mostly on finite-dimensional problems with Gaussian noise and when the solution operator is continuous \cite{law2015data}.

However, for a large numbers of PDEs, such as the fundamental equations of fluid dynamics, the forward problem may not be well-posed. Existence, uniqueness or stability of solutions are either not true or can not be established rigorously. This issue is further exacerbated by the fact that for many of these PDEs, numerical approximations either may not converge on mesh refinement or converge too slowly to be useful. This is often a result of the sensitivity of solutions to small perturbations and the appearances of structures at smaller and smaller scales, as the grid is refined \cite{FKMT2017,FLMW1,Glimm2001}. 

Our main aim in this paper was to investigate Bayesian data assimilation (filtering) for such PDEs with a very unstable or even \emph{ill-posed forward problem}. Our main results, summarized in Section \ref{sec:main}, concern the properties of the time-dependent filtering distribution $\nu^{\bm{y}}_t$ (exact posterior based on the underlying "ground truth" map $\cS^\dagger_t$) and numerical approximations $\nu^{\Delta,\bm{y}}_t$ based on a approximate solution operator $\cS^\Delta_t$, with $\Delta > 0$ a discretization parameter (grid size) and $\bm{y} = (y_1,\dots,y_N)$ being finite-dimensional (noisy) measurements acquired over time $t\in [0,T]$. We were able to show: 
\begin{itemize}
\item \textbf{(Well-posedness)} We prove that Bayesian filtering is \emph{well-posed} under very general assumptions on the forward solution operator $\cS^\dagger_t: X \to X$; in particular, we show that the measurement-to-posterior mapping $\bm{y} \mapsto \nu^{\bm{y}}_t$ is locally Lipschitz continuous, even if the forward mapping $\bar{u} \mapsto \cS^\dagger_t(\bar{u})$ is discontinuous (cp. Theorem \ref{thm:filtering-wp}).
\item \textbf{(Consistency)} We prove that approximations $\nu^{\Delta,\bm{y}}_t \approx \nu^{\bm{y}}_t$ of the filtering distribution, e.g., arising from numerical discretization of the underlying PDE at mesh size $\Delta > 0$, \emph{converge} to the exact posterior as $\Delta \to 0$, provided that the approximate solution operators $\cS^\Delta_t \to \cS^\dagger_t$ converge only in a mean-square sense relative to the prior, (cp. Theorem \ref{thm:filtering-consistency}).
\item \textbf{(Compactness and stability)} We demonstrate that even in the absence of any convergence-guarantees of the approximate solution operators $\cS^\Delta_t: X \to X$, the corresponding approximate Bayesian filtering distributions $\{\mu^{\Delta,y}\}_{\Delta>0}$ form a \emph{compact sequence} under mild conditions (satisfied e.g. by the Navier-Stokes equations), and hence possess limit points, (cp. Theorem \ref{thm:filtering}).
\end{itemize}
The well-posedness results in the context of Bayesian data assimilation presented in this work, even for models for which the forward problem may be ill-posed, have been derived under mild assumptions and are applicable to a wide range of models encountered in practice. The stability results should be of particular significance to practitioners, as they demonstrate that under readily verifiable conditions on the numerical scheme, the approximate solutions of the data assimilation problem are stable with respect to perturbations of the measurements, \emph{independently of the numerical resolution and physical parameters such as the viscosity}. 

Our consistency results do not only imply the convergence of the filtering distributions $\nu^{\Delta,\bm{y}}_t \to \nu^{\bm{y}}_t$ given convergent approximations of the forward problem $\cS^\Delta_t \to \cS^\dagger_t$, but they also provide \emph{quantitative error bounds} on the (Wasserstein-) distance between the exact and approximate filtering distributions. These upper bounds are obtained in terms of the mean-square distance between the exact and approximate solution operators with respect to the prior. Such quantitative estimates are not only of importance in studying the convergence of Bayesian filtering based on traditional numerical discretizations, but also open up the possibility of deriving similar quantitative bounds for Bayesian data assimilation based on novel neural network-based operator learning frameworks such as \cite{fourierop2020}, extending the work \cite{FNObounds} to the Bayesian filtering context (cp. Remark \ref{rmk:surrogate}). 

Finally, the general compactness properties uncovered in the present work allow us to define a set of candidate solutions to the filtering problem, generated by suitable numerical schemes. As this set can be shown to be non-empty a priori, this potentially opens up the possibility of identifying the correct solution among these candidates by a suitable selection criterion to single out a ``canonical'' posterior amongst the set of candidate solutions. We propose to further investigate these questions in forthcoming work.

\appendix

\section{Mathematical complements}

\subsection{\texorpdfstring{$L^p$}-norms}
\label{sec:Lp}

Let $X$ be a separable Banach space, and let $\mup\in \cP(X)$ be a probability measure. We introduce the $L^p(\mup)$-norm ($p\in [1,\infty]$) of a Borel measurable mapping $\cF: X \to Y$, $\bar{u} \mapsto \cF(\bar{u})$, with $X$, $Y$ Banach space, as follows:
\begin{align}
\label{eq:L2mu}
\Vert \cF \Vert_{L^p({\mup})}
:=
\begin{cases}
\displaystyle{\left(
\int_{X}
\Vert \cF(\bar{u}) \Vert_Y^p
\, 
{\mup}(d\bar{u})
\right)^{1/p}
}, & (p<\infty), \\
\displaystyle{
\esssup_{\bar{u} \sim \mup} \Vert \cF(\bar{u}) \Vert_{Y},
}
& (p=\infty),
\end{cases}
\end{align}
where we recall that for $p=\infty$, the essential supremum is defined by
\[
\esssup_{\bar{u}\sim \mup} \Vert \cF(\bar{u}) \Vert_{Y}
:=
\sup
\Big\{M>0 \, \Big| \,  \mup(\{\bar{u} \, |\, \Vert \cF(\bar{u}) \Vert_{X} >M\}) > 0\Big\}.
\]
In particular, if $Y=X$ and if $X\to X$, $\bar{u} \mapsto \cF(\bar{u}) = \bar{u}$ is given by the identity mapping, then we have (for $p<\infty$):
\[
\Vert \bar{u} \Vert_{L^p(\mup)}
=
\left(
\int_{X} \Vert \bar{u} \Vert_{X}^p \, \mup(d\bar{u})
\right)^{1/p}.
\]

\subsection{Wasserstein distance}
\label{sec:Wasserstein}
In this section, we introduce the notation for the rest of the paper and recall some preliminaries that are necessary to define the Bayesian inverse problem in a mathematically precise manner. 

Given a separable Hilbert space $X$, we denote by $\P(X)$ the space of Borel probability measures on $X$. The term ``measurable'' will always refer to Borel measurability. A sequence $\mu^\Delta \in \P(X)$ is said to converge weakly to a limit $\mu$, denoted $\mu^\Delta \weaklyto \mu$, if 
\[
\int_X \phi(u) \, \mu^\Delta(du) \to \int_X \phi(u) \, \mu(du), 
\qquad
\forall \phi \in C_b(X),
\]
where $C_b(X)$ denotes the space of bounded, continuous functions on $X$.
We denote by $\P_p(X)$ the space of Borel probability measures $\mu \in \P(X)$, possessing finite $p$-th moments, $\int_X \Vert u \Vert_X^p \, \mu(du) < \infty$, metrized by the $p$-Wasserstein distance $W_p$:
\begin{align}
\label{eq:Wp}
W_p(\mu,\nu)
:=
\inf_{\pi\in \Gamma(\mu,\nu)}
\left(
\int_{X\times X} \Vert u-v \Vert_X^p \, \pi(du,dv)
\right)^{1/p}.
\end{align}
Here, $\Gamma(\mu,\nu)$ is the set of couplings between $\mu$ and $\nu$, i.e. probability measures $\pi$ on $X\times X$, with projections $(\proj_1)_\# \pi = \mu$, $(\proj_2)_\# \pi = \nu$. Given a map $F: X\to Y$, we denote by $F_\#\mu \in \P(Y)$ the push-forward of a probability measure $\mu \in \P(X)$ by $F$; the push-forward measure satisfies the relation 
\[
\int_Y \phi(v) \, \left(F_\#\mu\right)(dv)
=
\int_X (\phi \circ F)(u) \, \mu(du),
\]
for all measurable functions $\phi: Y \to \R$ such that $\phi \circ F \in L^1(\mu)$. We recall that the $1$-Wasserstein distance $W_1(\mu,\nu)$ between measures $\mu,\nu\in \P_1(X)$ can also be determined via the Kantorovich duality:
\begin{align} \label{eq:kantorovich}
W_1(\mu,\nu)
=
\sup_{\Phi}
\int_X \Phi(u) \left[\mu(du) - \nu(du)\right],
\end{align}
where the supremum is taken over all Lipschitz continuous $\Phi \in \Lip(X)$, with $\Vert \Phi \Vert_{\Lip} \le 1$, and we define the semi-norm $\Vert \slot \Vert_{\Lip}$ by 
\begin{align} \label{eq:Lip}
\Vert \Phi \Vert_{\Lip} 
:=
\sup_{u\ne v} \frac{|\Phi(u) - \Phi(v)|}{\Vert u - v\Vert_X}.
\end{align}
We also recall that for a sequence of measures $\mu^{\Delta}\in \P_1(X)$, $\Delta \to 0$, and $\mu \in \P_1(X)$, we have
\begin{align}\label{eq:W1conv}
\lim_{\Delta \to 0} W_1(\mu^\Delta,\mu) = 0
\iff
\left\{
\begin{gathered}
\mu^\Delta \weaklyto \mu \text{ converges weakly and}
\\
\int_X \Vert u \Vert_X \, \mu^{\Delta}(du)
\to 
\int_X \Vert u \Vert_X \, \mu(du).
\end{gathered}
\right\}
\end{align}
We finally prove that if $\cF: X \to X$ is a Lipschitz continuous map and $\cF_\#: \cP_1(X) \to \cP_1(X)$ denotes the push-forward under $\cF$, then $\Lip(\cF:X \to X) = \Lip(\cF_\#: \cP_1(X) \to \cP_1(X))$, which implies the claim of Proposition \ref{prop:LipW1}, for $\cF = \cS^\dagger_t$.

\begin{proof}[Proof of Proposition \ref{prop:LipW1}]
\label{pf:LipW1}
Let $\cF: X \to X$ be any Lipschitz continuous map (in particular, the following applies to $\cF = \cS^\dagger_t: X \to X$).
By definition, we have 
\begin{align*}
W_1(\cF_\#\mu, \cF_\#\mu')
&=
\inf_{\pi \in \Gamma(\cF_\#\mu,\cF_\#\mu')}
\int_{X\times X} \Vert u - u' \Vert_X \, \pi(du,du')
\\
&\le
\inf_{ \pi \in \Gamma(\mu,\mu')}
\int_{X\times X} \Vert u - u' \Vert_X \, (\cF\times \cF)_\#\pi(du,du')
\\
&=
\inf_{ \pi \in \Gamma(\mu,\mu')}
\int_{X\times X} \Vert \cF(u) - \cF(u') \Vert_X \, \pi(du,du')
\\
&\le 
\Lip(\cF) \, 
\inf_{ \pi \in \Gamma(\mu,\mu')}
\int_{X\times X} \Vert u - u' \Vert_X \, \pi(du,du')
\\
&=
\Lip(\cF) \, W_1(\mu,\mu').
\end{align*}
To see the optimality of $\Lip(\cF)$, we note that for any $\tilde{L}>0$, such that $W_1(\cF_\#\mu,\cF_\#\mu') \le \tilde{L} W_1(\mu,\mu')$ holds for all $\mu,\mu'$, we have
\begin{align*}
\tilde{L} 
&\ge 
\sup_{\mu\ne \mu' \in \cP_1(X)} \frac{W_1(\cF_\#\mu, \cF_\#\mu')}{W_1(\mu,\mu')}
\ge 
\sup_{u\ne u' \in X} \frac{W_1(\cF_\#\delta_u, \cF_\#\delta_{u'})}{W_1(\delta_u,\delta_{u'})}
\\
&=
\sup_{u\ne u' \in X} \frac{\Vert \cF(u) - \cF(u') \Vert_X}{\Vert u - u'\Vert_X}
=
\Lip(\cF).
\end{align*}
\end{proof}

%We will denote the Kullback-Leibler (KL) divergence of a measure $\nu\in \P(X)$ with respect to $\mu\in \P(X)$ by $\KL(\nu || \mu)$; We recall that the Kullback-Leibler divergence is defined by
%\begin{align} \label{eq:KL}
%\KL(\nu||\mu)
%:=
%\begin{cases}
%\int_X \log\left(\frac{d\nu}{d\mu}\right) \, d\nu, & (\nu \ll \mu),
%\\
%+\infty, & (\nu \not \ll \mu).
%\end{cases}
%\end{align}
%It is well-known that $\P(X) \to \R$, $\nu \mapsto \KL(\nu||\mu)$ is a strictly convex, coercive and lower semi-continuous function. In particular, for any $\alpha > 0$ the set $\set{\nu \in \P(X)}{\KL(\nu||\mu) \le \alpha}$ is compact in the weak topology on $\P(X)$.

\subsection{Compactness}
We recall the Arzela-Ascoli theorem, characterizing compactness in $C_\mathrm{loc}(X,Y)$:
\begin{theorem}[Arzela-Ascoli] \label{thm:arzelaascoli}
Let $X$ be a locally compact Hausdorff space. Let $Y$ be a complete metric space. A subset $F\subset C_\loc(X,Y)$ is relatively compact iff it is equi-continuous and for all $x\in X$, the set $\set{f(x)}{f \in F}$ is relatively compact in $Y$.
\end{theorem}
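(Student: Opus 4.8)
The plan is to prove both implications of the equivalence, interpreting ``relatively compact'' in the topological sense and recalling that, since $Y$ is a metric space, the topology on $C_\loc(X,Y)$ is that of uniform convergence on compact subsets of $X$ (equivalently, the compact-open topology), which is Hausdorff because $Y$ is. Throughout, by \emph{equicontinuity} I mean equicontinuity at each point: for every $x\in X$ and $\varepsilon>0$ there is a neighborhood $U$ of $x$ with $d_Y(f(x'),f(x))<\varepsilon$ for all $x'\in U$ and all $f\in F$. The two directions use rather different tools, so I would treat them separately.

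For the necessity direction ($\Rightarrow$), suppose $\overline{F}$ is compact. First I would observe that for each $x\in X$ the evaluation map $C_\loc(X,Y)\to Y$, $f\mapsto f(x)$, is continuous, so it carries the compact set $\overline{F}$ to a compact subset of $Y$ containing $\set{f(x)}{f\in F}$; this yields pointwise relative compactness. For equicontinuity I would use the \emph{local compactness} of $X$: fix $x$ and a compact neighborhood $K$ of $x$. The restriction map from $\overline{F}$ into $C(K,Y)$, equipped with the uniform metric $\sup_{z\in K}d_Y(\slot,\slot)$, is continuous, so its image is compact and hence totally bounded. Covering this image by finitely many $\varepsilon/3$-balls centered at functions $f_1,\dots,f_n$, and using the (uniform) continuity of each $f_i$ on $K$ to choose a common neighborhood $U\ni x$ on which each $f_i$ varies by less than $\varepsilon/3$, a three-term triangle inequality gives $d_Y(f(x'),f(x))<\varepsilon$ for all $f\in F$ and $x'\in U$, i.e. equicontinuity at $x$.

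For the sufficiency direction ($\Leftarrow$) I would argue through the topology of pointwise convergence and Tychonoff's theorem. Let $K_x:=\overline{\set{f(x)}{f\in F}}$, which is compact by hypothesis, and set $P:=\prod_{x\in X}K_x$, compact in the product (pointwise) topology. Then $F\subseteq P$, and its pointwise closure $\overline{F}^{\mathrm{pt}}$ is a closed subset of $P$, hence pointwise-compact. A structural step is to show that $\overline{F}^{\mathrm{pt}}$ is still equicontinuous: for fixed $x,x'$ the condition $d_Y(g(x'),g(x))\le\varepsilon$ is closed in the pointwise topology (evaluations and the metric being pointwise-continuous), and $F$ lies in this set for every $x'$ in an equicontinuity neighborhood $U$ of $x$, so $\overline{F}^{\mathrm{pt}}$ does too. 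In particular each element of $\overline{F}^{\mathrm{pt}}$ is continuous, so $\overline{F}^{\mathrm{pt}}\subseteq C_\loc(X,Y)$.

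The heart of the argument, and the step I expect to be the main obstacle, is the classical fact that \emph{on an equicontinuous family the pointwise topology coincides with the topology of uniform convergence on compacts}. I would establish this by a net argument: if $(f_\alpha)\subseteq\overline{F}^{\mathrm{pt}}$ converges pointwise to $f$, then on a compact $K$ one covers $K$ by finitely many equicontinuity neighborhoods $U_{x_1},\dots,U_{x_m}$, applies pointwise convergence at the finitely many centers $x_1,\dots,x_m$, and combines these with the equicontinuity of the $f_\alpha$ and of $f$ through a triangle inequality to obtain uniform convergence on $K$. It follows that the identity map on $\overline{F}^{\mathrm{pt}}$ from the pointwise to the compact-open topology is a homeomorphism, so $\overline{F}^{\mathrm{pt}}$ is compact in $C_\loc(X,Y)$. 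Since the compact-open topology is finer than the pointwise one, the compact-open closure $\overline{F}$ is contained in $\overline{F}^{\mathrm{pt}}$ and is closed therein (being a closed subset of a Hausdorff space), hence itself compact; this is precisely the relative compactness of $F$ in $C_\loc(X,Y)$, completing the proof.
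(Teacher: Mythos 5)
The paper does not prove this statement: Theorem \ref{thm:arzelaascoli} is recalled in the appendix as classical background, with no argument given, so there is no proof of record to compare yours against. Judged on its own, your proof is correct and is the standard one. The necessity direction correctly exploits local compactness through a compact neighborhood $K$, total boundedness of the restricted image in $C(K,Y)$, and a three-term triangle inequality; the one point to make explicit is that the common neighborhood $U$ of $x$ should be taken inside the interior of $K$, which is possible precisely because $K$ is a \emph{neighborhood} of $x$. The sufficiency direction via Tychonoff is also sound: the pointwise closure $\overline{F}^{\mathrm{pt}} \subset \prod_{x} K_x$ is compact, your observation that equicontinuity is a pointwise-closed condition (so the closure remains equicontinuous, hence consists of continuous functions) is the right structural step, and the key lemma that the pointwise and compact-open topologies agree on an equicontinuous family is proved correctly by the finite-cover-plus-triangle-inequality net argument; the final passage from compactness of $\overline{F}^{\mathrm{pt}}$ to relative compactness of $F$ in $C_\loc(X,Y)$ uses the Hausdorffness of the compact-open topology exactly as needed (a continuous bijection from a compact space to a Hausdorff space is a homeomorphism, and $\overline{F}\subseteq\overline{F}^{\mathrm{pt}}$ is closed in a compact set). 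Two minor observations: first, with relative compactness of the sets $\set{f(x)}{f\in F}$ meaning compact closure in $Y$, the completeness of $Y$ assumed in the statement is never actually used in your argument (it would be needed only for a variant phrased in terms of total boundedness); second, local compactness of $X$ enters only in the necessity direction, as your proof correctly reflects --- the sufficiency direction holds for arbitrary $X$.
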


%%%%%%%%%%%%%%%%%%%%%%%%%%%%%%%%%%
%%% The following is not essential for the discussion...
%%%%%%%%%%%%%%%%%%%%%%%%%%%%%%%%%%

\if{0}
The following theorem relates the compactness of a family of measures $\mu^{\Delta}_t \in L^1_t(\P)$, where $L^1_t(\P) = L^1_t([0,T];\P(L^2_x(\T^d)))$ is metrized by the distance $d_T(\mu_t,\nu_t)$, 
\[
d_T(\mu_t, \nu_t)
:=
\int_0^T W_1(\mu_t,\nu_t) \, dt, \quad \text{($W_1$ = $1$-Wasserstein distance)},
\]
to the uniform decay of their structure functions
\[
\S^T_2(\mu_t^\Delta;r)
:=
\left(
\int_0^T \int_{L^2_x}
\int_{\T^d} \fint_{B_r(0)}
|u(x+h)-u(x)|^2 
\, dh \, dx \, \mu_t(du) \, dt
\right)^{1/2}.
\]

\begin{proposition} \label{prop:L1tP-cpct}
Let $\cS^\Delta: [0,T] \times L^2_x \to L^2_x$, be a family of operators, such that $(t,u) \mapsto \cS^\Delta_t(u)$ is continuous for each $\Delta > 0$. Assume that there exists a constant $\bar{B}_\cS>0$, such that $\Vert \cS^\Delta_t(u)\Vert_{L^2_x} \le \bar{B}_\cS \Vert u \Vert_{L^2_x}$ for all $t\in [0,T]$ and $\Delta > 0$. Let $\mup\in \P(L^2_x)$ be a probability measure on $L^2_x(\T^d)$ with finite second moments, i.e. such that
\[
\int_{L^2_x} \Vert \bar{u} \Vert_{L^2_x}^2 \, \mup(d\bar{u}) < \infty,
\]
and define a family of probability measures $\{\mu^\Delta_t\}_{\Delta>0}$ as the push-forward of $\mup$ under $\cS^\Delta_t$, i.e. $\mu^\Delta_t := \cS^\Delta_{t,\#} \mup \in L^1_t(\P)$. If $\{\mu^\Delta_t\}_{\Delta > 0}$ is relatively compact in $L^1_t(\P)$, then there exists a modulus of continuity $\phi: [0,1]\to [0,\infty)$, $r\mapsto \phi(r)$, such that 
\[
\S^T_2(\mu^{\Delta}_t; r) \le \phi(r), \quad \forall \, \Delta > 0, \; \forall \, r\in [0,1].
\]
\end{proposition}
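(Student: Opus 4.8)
The plan is to reduce the statement to the uniform decay
$$
\lim_{r\to 0}\ \sup_{\Delta>0}\ \S_2^T(\mu^\Delta_t;r)=0,
$$
where $\mu^\Delta_t:=\cS^\Delta_{t,\#}\mup$. Granting this, one sets $\phi(r):=\sup_{\Delta>0}\sup_{0<s\le r}\S_2^T(\mu^\Delta_t;s)$, which is non-decreasing, dominates every $\S_2^T(\mu^\Delta_t;r)$, and tends to $0$ as $r\to0$; replacing $\phi$ by a continuous majorant yields a genuine modulus of continuity. Thus the core is an equicontinuity (Arzelà--Ascoli-in-$r$) argument resting on two facts: (i) for each \emph{single} measure with finite second moment the structure function vanishes as $r\to0$; and (ii) relative compactness of $\{\mu^\Delta_t\}$ in $L^1_t(\P)$ upgrades this pointwise decay to uniform decay. (This is exactly the converse of Proposition~\ref{prop:cpct}.)

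First I would record the elementary bounds on $\S_2$. Writing $\S_2(u;r)^2=\fint_{B_r(0)}\Vert u(\slot+h)-u(\slot)\Vert_{L^2_x}^2\,dh$, the map $u\mapsto\S_2(u;r)$ is a seminorm, so $|\S_2(u;r)-\S_2(u';r)|\le\S_2(u-u';r)\le 2\Vert u-u'\Vert_{L^2_x}$ and in particular $\S_2(u;r)\le 2\Vert u\Vert_{L^2_x}$, all uniformly in $r$. Since $\mu^\Delta_t=\cS^\Delta_{t,\#}\mup$ and $\Vert\cS^\Delta_t(\bar u)\Vert_{L^2_x}\le\bar B_\cS\Vert\bar u\Vert_{L^2_x}$, the finiteness of the second moment of $\mup$ gives the tail bound $\sup_{\Delta,t}\int_{\{\Vert u\Vert_{L^2_x}>M\}}\Vert u\Vert_{L^2_x}^2\,\mu^\Delta_t(du)\le \bar B_\cS^2\int_{\{\Vert\bar u\Vert_{L^2_x}>M/\bar B_\cS\}}\Vert\bar u\Vert_{L^2_x}^2\,\mup(d\bar u)=:\omega(M)\to0$ as $M\to\infty$. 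This uniform square-integrability, inherited by any weak limit of the family, is the crucial structural input. For (i), fix $\nu_t\in L^1_t(\P)$ with $\int_0^T\!\int\Vert u\Vert_{L^2_x}^2\,\nu_t(du)\,dt<\infty$: for each $u$, $\S_2(u;r)\to0$ as $r\to0$ by continuity of translations in $L^2_x$, and $\S_2(u;r)^2\le4\Vert u\Vert_{L^2_x}^2$ is integrable, so dominated convergence yields $\S_2^T(\nu_t;r)\to0$.

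The decisive step is to show that, for fixed $r$, the map $\nu_t\mapsto\S_2^T(\nu_t;r)$ is uniformly continuous on the family in $d_T$ up to a uniform-integrability remainder. For $R>0$ the function $u\mapsto\min(\S_2(u;r),R)^2$ is bounded and Lipschitz (constant $\le 4R$), hence admissible in the Kantorovich dual~\eqref{eq:kantorovich} of $W_1$, giving $\big|\int_0^T\!\int\min(\S_2(\slot;r),R)^2\,(d\mu_t-d\nu_t)\,dt\big|\le 4R\,d_T(\mu_t,\nu_t)$; the truncation error obeys $\int_0^T\!\int_{\{\S_2>R\}}\S_2^2\,d\mu_t\,dt\le 4T\,\omega(R/2)$ via $\S_2\le2\Vert u\Vert_{L^2_x}$, and likewise for $\nu_t$. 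Hence, for measures in the family or their limits, $|\S_2^T(\mu_t;r)^2-\S_2^T(\nu_t;r)^2|\le 4R\,d_T(\mu_t,\nu_t)+8T\,\omega(R/2)$. Now fix $\varepsilon>0$; pick $R$ with $8T\,\omega(R/2)<\varepsilon^2/3$; using relative compactness of $K=\overline{\{\mu^\Delta_t\}}\subset L^1_t(\P)$, choose a finite $\delta$-net $\nu^{(1)}_t,\dots,\nu^{(m)}_t\in K$; by (i) choose $r_0$ with $\S_2^T(\nu^{(i)}_t;r)^2<\varepsilon^2/3$ for all $i$ and $r\le r_0$; finally shrink $\delta$ so $4R\delta<\varepsilon^2/3$. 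Then for any $\Delta$ and $r\le r_0$, approximating $\mu^\Delta_t$ by its nearest net point gives $\S_2^T(\mu^\Delta_t;r)^2<\varepsilon^2$, i.e. $\sup_{\Delta}\S_2^T(\mu^\Delta_t;r)\le\varepsilon$, which is the claimed uniform decay.

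The main obstacle is precisely the mismatch between the quadratic, and hence unbounded and non-Lipschitz, integrand $\S_2(u;r)^2$ and the first-order metric $d_T$ (built from $W_1$, which tests only against Lipschitz functions): plain $d_T$-convergence does not control $\int\S_2^2\,d\mu_t$. This is overcome by the truncation-plus-uniform-integrability estimate above, where it is essential that the uniform square-integrability $\omega$ is not an extra hypothesis but a consequence of the finite second moment of $\mup$ together with the linear bound $\Vert\cS^\Delta_t(\bar u)\Vert_{L^2_x}\le\bar B_\cS\Vert\bar u\Vert_{L^2_x}$. The remaining points---weak-$\ast$ measurability of $t\mapsto\mu^\Delta_t$, Tonelli for the space--time integrals, and the fact that weak limits of the family retain the tail bound $\omega$---are routine.
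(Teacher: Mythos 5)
Your proof is correct, and it reaches the conclusion by a genuinely different decomposition than the paper's own argument. The paper mollifies the \emph{state}: writing $u_\epsilon = \rho_\epsilon \ast u$, it splits $\S^T_2(\mu^\Delta_t;r)$ into a mollified part, controlled by $\S_2(u_\epsilon;r) \le C\Vert u\Vert_{L^2_x}\,(r/\epsilon)$ (so the $r$-decay comes from the gradient bound on mollified functions, uniformly in $\Delta$ for fixed $\epsilon$), plus the remainder $2\big(\int_0^T\int_{L^2_x} \Vert u - u_\epsilon\Vert^2_{L^2_x}\,\mu^\Delta_t(du)\,dt\big)^{1/2}$; the heart of the paper's proof is that this remainder vanishes as $\epsilon\to 0$ \emph{uniformly in} $\Delta$, shown by splitting into $B_M$ and $B_M^c$ (the tail controlled exactly by your uniform-integrability quantity $\omega$, coming from the push-forward structure and the second moment of $\mup$), linearizing $\Vert u-u_\epsilon\Vert^2_{L^2_x}\le 2M\Vert u-u_\epsilon\Vert_{L^2_x}$ on $B_M$ so that Kantorovich duality applies along a finite $\delta/4M$-net, and finally dominated convergence in $\epsilon$ at the net points. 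You instead truncate the \emph{integrand}: $\min(\S_2(\slot;r),R)^2$ is bounded and $4R$-Lipschitz, Kantorovich duality handles the net comparison directly, the truncation error is absorbed into the same $\omega$, and the $r$-decay at the net points comes from strong continuity of translations plus dominated convergence. Both proofs thus share the same skeleton (uniform tail bound from the linear growth of $\cS^\Delta_t$, finite net from relative compactness, duality on a Lipschitz-ized observable, DCT at finitely many reference measures), but yours avoids mollification and the attendant double limit in $(\epsilon,r)$, which makes it shorter; the paper's route has the minor advantage that the mollified estimate $C\Vert u \Vert_{L^2_x}(r/\epsilon)$ is quantitative in $r$. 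Two points you dismiss as routine do require the checks you indicate: the net points a priori lie only in the closure of the family, so one must either verify that $d_T$-limits inherit the tail bound $\omega$ (via lower semicontinuity of $u\mapsto 1_{\{\Vert u\Vert_{L^2_x} > M\}}\Vert u \Vert_{L^2_x}^2$ and the portmanteau theorem, applied at a.e.\ $t$ along a subsequence) or, more simply, choose the net points from within $\{\mu^\Delta_t\}_{\Delta>0}$ itself, which total boundedness permits; and the monotone function $r \mapsto \sup_{\Delta}\sup_{0<s\le r}\S_2^T(\mu^\Delta_t;s)$ indeed admits a continuous majorant vanishing at $0$, so the modulus-of-continuity packaging is sound.
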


\begin{proof}
To prove the claim, it suffices to show that $\limsup_{r\to 0} \sup_{\Delta > 0} \S^T_2(\mu^{\Delta}_t;r) = 0$, i.e. that
\[
\limsup_{r\to 0} \sup_{\Delta > 0}
\left(
\int_0^T\int_{L^2_x}\int_{\T^d}\fint_{B_r(0)} |u(x+h) - u(x)|^2 \, dh \, dx \, \mu^\Delta_t(du) \, dt
\right)^{1/2}
=
0.
\]
We first note that the quantity on the left is finite for any $\Delta > 0$ and $r\in [0,1]$, since we trivially have 
\[
\S^T_2(\mu^\Delta_t;r) \le 2\left(\int_0^T \int_{L^2_x} \Vert u\Vert_{L^2_x}^2 \, \mu^\Delta_t(du) \, dt\right)^{1/2}, 
\]
and by assumption on the second moment of $\mup$ and the uniform boundedness of the operators $\cS^\Delta_t: L^2_x \to L^2_x$, there exists a constant $\bar{B}_\cS>0$, such that
\begin{align*}
\int_{L^2_x} \Vert u \Vert_{L^2_x}^2 \, \mu_t^\Delta(du)
&=
\int_{L^2_x} \Vert u \Vert_{L^2_x}^2 \, d\left(\cS^\Delta_{t,\#}\mup\right)(du)
=
\int_{L^2_x} \Vert \cS^\Delta_t(\bar{u}) \Vert_{L^2_x}^2 \, d\mup(\bar{u})
\\
&\le
\int_{L^2_x} \bar{B}_\cS^2\Vert \bar{u} \Vert_{L^2_x}^2 \, d\mup(\bar{u}) < \infty,
\end{align*}
is uniformly bounded for any $t\in [0,T]$.

To show that $\S^T_2(\mu^\Delta_t;r)$ converges to $0$, uniformly as $r\to 0$, we will use mollification. In the following, we denote by $u_\epsilon$ the $\epsilon$-mollification of $u\in L^2_x$, $u_\epsilon(x) = (\rho_\epsilon\ast u)(x)$, where $\rho_\epsilon(x) := \epsilon^{-d}\rho(x/\epsilon)$, and $\rho\ge 0$ is a smooth function supported in a ball of radius $1$, such that $\int_{B_1(0)} \rho(x) \, dx = 1$. We now note that 
\begin{align*}
\S^T_2(\mu^\Delta_t;r)
&\le
\left(\int_0^T \int_{L^2_x} \int_{\T^d}\fint_{B_r(0)} |u_\epsilon(x+h) - u_\epsilon(x)|^2 \, dh \, dx \, \mu_t(du) \, dt\right)^{1/2}
\\
&\qquad + 
\sup_{\Delta > 0} 2\left(\int_0^T \int_{L^2_x} \Vert u - u_\epsilon \Vert_{L^2_x}^2 \, \mu^\Delta_t(du) \, dt \right)^{1/2} .
\end{align*}
For any $u\in L^2_x$ and $\epsilon > 0$, we have
\[
\int_{\T^d} \fint_{B_r(0)} |u_\epsilon(x+h) - u_\epsilon(x)|^2 \, dh \, dx
\le
\Vert \nabla u_\epsilon \Vert_{L^2_x}^2 \, r^2
\le 
C \Vert u \Vert_{L^2_x}^2 \left(\frac{r}{\epsilon}\right)^2.
\]
In particular, this implies that 
\begin{align*}
\limsup_{r\to 0} \sup_{\Delta>0}
&\int_0^T \int_{L^2_x} \int_{\T^d} \fint_{B_r(0)} |u_\epsilon(x+h) - u_\epsilon(x)|^2 \, dh \, dx \, \mu^\Delta_t(du) \, dt
\\
&\le
\limsup_{r\to 0} 
C T \left(\int_{L^2_x} \Vert \bar{u} \Vert_{L^2_x}^2 \, \mup(d\bar{u})\right) \left(\frac{r}{\epsilon}\right)^2
\\
&= 0.
\end{align*}
Hence, we have
\begin{align} \label{eq:S-moll-est}
\limsup_{r\to 0} \sup_{\Delta > 0} \S^T_2(\mu^\Delta_t;r)
\le 
2\left(
\sup_{\Delta > 0}
\int_0^T \int_{L^2_x} \Vert u - u_\epsilon \Vert_{L^2_x}^2 \, \mu^\Delta_t(du) \, dt
\right)^{1/2},
\end{align}
for any $\epsilon > 0$. The statement of this proposition will thus follows from the following claim: If the family $\{\mu^\Delta_t\}_{\Delta > 0}$ is compact in $L^1_t(\P)$, then we have
\[
\limsup_{\epsilon \to 0} \sup_{\Delta > 0}
\int_0^T \int_{L^2_x} \Vert u - u_\epsilon \Vert_{L^2_x}^2 \, \mu^\Delta_t(du) \, dt
=
0.
\]
To see why, fix $M>0$ and denote $B_M = \set{u\in L^2_x}{\Vert u \Vert_{L^2_x} \le M}$. Then 
\begin{gather} \label{eq:I-II-split}
\begin{aligned}
\int_0^T \int_{L^2_x} \Vert u - u_\epsilon \Vert_{L^2_x}^2 \, \mu^\Delta_t(du) \, dt
&=
\int_0^T \int_{L^2_x\cap B_M^c} \Vert u - u_\epsilon \Vert_{L^2_x}^2 \, \mu^\Delta_t(du) \, dt
\\
&\qquad 
+
\int_0^T \int_{L^2_x\cap B_M} \Vert u - u_\epsilon \Vert_{L^2_x}^2 \, \mu^\Delta_t(du) \, dt
\\
&=:
(I^\Delta) + (II^\Delta).
\end{aligned}
\end{gather}
We can estimate
\begin{align*}
(I^\Delta) 
&\le
\int_0^T\int_{L^2_x \cap B_M^c}
\left\{
2\Vert u \Vert_{L^2_x}^2 + 2\Vert u_\epsilon\Vert_{L^2_x}^2
\right\}
\, \mu^\Delta_t(du) \, dt
\\
&\le
4\int_0^T\int_{L^2_x \cap B_M^c}
\Vert u \Vert_{L^2_x}^2
\, \mu^\Delta_t(du) \, dt
\\
&=
4\int_0^T\int_{L^2_x \cap [S^\Delta_t]^{-1}(B_M^c)}
\Vert \cS^\Delta_t(\bar{u}) \Vert_{L^2_x}^2
\, d\mup(d\bar{u}) \, dt
\end{align*}
By our assumption on the boundedness of $\cS^\Delta_t$, there exists a constant $\bar{B}_\cS>0$, such that
$\Vert \cS^\Delta_t(\bar{u}) \Vert_{L^2_x} \le \bar{B}_\cS \Vert \bar{u} \Vert_{L^2_x}$,
for all $\bar{u}\in L^2_x$, and uniformly for $\Delta > 0$. This implies that 
\[
\sup_{\Delta > 0}\;
\int_{L^2_x \cap [S^\Delta_t]^{-1}(B_M^c)}
\Vert S^\Delta_t(\bar{u}) \Vert_{L^2_x}^2
\, d\mup(d\bar{u})
\le
\int_{L^2_x \cap B_{M/\bar{B}_\cS }^c}
\bar{B}_\cS ^2\Vert \bar{u} \Vert_{L^2_x}^2
\, d\mup(d\bar{u}),
\]
and hence
\begin{align*}
\sup_{\Delta > 0} \; (I^\Delta)
&= \sup_{\Delta > 0} \int_0^T \int_{L^2_x\cap B^c_M} \Vert u - u_\epsilon \Vert^2_{L^2_x} \, \mu^\Delta_t(du) \, dt
\\
&\le
4T \bar{B}_\cS ^2 \int_{L^2_x \cap B_{M/\bar{B}_\cS }^c} \Vert \bar{u} \Vert_{L^2_x}^2 \, d\mup(d\bar{u}).
\end{align*}
Let $\delta > 0$ be arbitrary. Since $\int_{L^2_x} \Vert \bar{u} \Vert_{L^2_x}^2 \, d\mup(d\bar{u}) < \infty$, we can fix $M> 0$ sufficiently large, such that 
\begin{align}\label{eq:I-est}
\sup_{\Delta > 0} \; (I^\Delta) < \delta.
\end{align}
To estimate the second term, we note that by the assumed relative compactness of $\{\mu^\Delta_t\}\subset L^1_t(\P)$ and for the same $\delta, M > 0$ as above, there exists a finite collection $\{\mu^{\Delta_k}_t\}_{k=1,\dots, N}$, such that for any $\Delta > 0$, there exists $k \in \{1,\dots, N\}$, such that $d_T(\mu^\Delta_t, \mu^{\Delta_k}_t) < \delta/4M$. It then follows that for any index $\Delta>0$, we have
\begin{align*}
(II^\Delta)
&= \int_0^T \int_{L^2_x\cap B_M} \Vert u - u_\epsilon \Vert^2_{L^2_x} \, \mu^\Delta_t(du) \, dt
\\
&\le 
\int_0^T \int_{L^2_x\cap B_M} 2M \Vert u - u_\epsilon \Vert_{L^2_x} \, \mu^\Delta_t(du) \, dt
\\
&=
2M \int_0^T \int_{L^2_x} \Vert u - u_\epsilon \Vert_{L^2_x} \, 
\left[
\mu^\Delta_t(du) 
- \mu^{\Delta_k}_t(du)
\right]\, dt
\\
&\qquad +
2M \int_0^T \int_{L^2_x} \Vert u - u_\epsilon \Vert_{L^2_x} \, 
\mu^{\Delta_k}_t(du)
\, dt
\\
&\le
4M \int_0^T W_1(\mu^\Delta_t,\mu^{\Delta_k}_t) \, dt
+
2M \int_0^T \int_{L^2_x} \Vert u - u_\epsilon \Vert_{L^2_x} \, 
 \mu^{\Delta_k}_t(du)
\, dt
\\
&<
\delta
+
2M \int_0^T \int_{L^2_x} \Vert u - u_\epsilon \Vert_{L^2_x} \, 
 \mu^{\Delta_k}_t(du)
\, dt,
\end{align*}
for any $\epsilon > 0$. In the second to last step, we have used the $2$-Lipschitz continuity of $u \mapsto \Vert u - u_\epsilon \Vert_{L^2_x}$ and Kantorovich duality for $W_1$. In the last step, we used the fact that $\{\mu^{\Delta_k}_t\}_{k=1,\dots, N}$ is a $\delta/4M$-net for $\{\mu^\Delta_t\}\subset L^1_t(\P)$. We further note that the integrand $u \mapsto \Vert u - u_\epsilon \Vert$ converges pointwise to $0$ as $\epsilon \to 0$, and is uniformly bounded by the (integrable) function $(u \mapsto 2\Vert u \Vert_{L^2_x}) \in L^1(\mu^{\Delta_k}_t \otimes dt)$ for any $k=1,\dots, N$. By the Lebesgue dominated convergence theorem, it thus follows that 
\[
\lim_{\epsilon \to 0} \int_0^T \int_{L^2_x} \Vert u - u_\epsilon \Vert_{L^2_x} \, 
 \mu^{\Delta_k}_t(du)
\, dt = 0,
\]
for any $k=1,\dots, N$. Since the set $\{\mu^{\Delta_k}_t\}_{k=1,\dots, N}$ is finite, we conclude that
\begin{align*}
\sup_{\Delta > 0} \; (II^\Delta)
&<
\delta + \lim_{\epsilon \to 0} \max_{k=1,\dots, N} 2M \int_0^T \int_{L^2_x} \Vert u - u_\epsilon \Vert_{L^2_x} \, 
 \mu^{\Delta_k}_t(du)
\, dt
=
\delta.
\end{align*}
Together with the estimate \eqref{eq:I-est}, the definition of $(I^\Delta)$, $(II^\Delta)$ \eqref{eq:I-II-split}, and \eqref{eq:S-moll-est}, we finally conclude that 
\[
\limsup_{r\to 0} \sup_{\Delta > 0} \S^T_2(\mu^\Delta_t;r) 
< \delta.
\]
But $\delta >0$ was arbitrary, so we must in fact have 
\[
\limsup_{r\to 0} \sup_{\Delta > 0} \S^T_2(\mu^\Delta_t;r) = 0,
\]
as claimed.
\end{proof}
\fi

\bibliographystyle{siam}
\bibliography{main}

\end{document}